\newcommand{\Be}{\begin{equation}}
\newcommand{\Ee}{\end{equation}}
\newcommand{\Bea}{\begin{align}}
\newcommand{\Eea}{\end{align}}
\newcommand{\Beas}{\begin{align*}}
\newcommand{\Eeas}{\end{endalign*}}
\newcommand{\Benu}{\begin{enumerate}}
\newcommand{\Eenu}{\end{enumerate}}
\newcommand{\Bi}{\begin{itemize}}
\newcommand{\Ei}{\end{itemize}}
\theoremstyle{plain}
\newtheorem{thm}{Theorem}[section]
\newtheorem{lemma}[thm]{Lemma}
\newtheorem{cor}[thm]{Corollary}
\newtheorem{proposition}[thm]{Proposition}
\theoremstyle{remark}
\newtheorem*{remarka}{Remark}
\numberwithin{equation}{section}
    \def\@@and{}
\begin{document}

\title[Spherical maximal operators on radial functions]{Spherical maximal operators with fractal sets of dilations on radial functions}

\author[D. Beltran]{David Beltran}
\address{Departament d'Anàlisi Matemàtica, Universitat de València, Burjassot, Spain}
\email{david.beltran@uv.es}

\author[J. Roos]{Joris Roos}
\address{Department of Mathematics and Statistics, University of Massachusetts Lowell, Lowell, MA, USA}
\email{joris\_roos@uml.edu}

\author[A. Seeger]{Andreas Seeger}
\address{Department of Mathematics, University of Wisconsin--Madison, Madison, WI, USA}
\email{seeger@math.wisc.edu}

\subjclass[2020]{42B25, 28A80}
\keywords{
Spherical maximal functions, $L^p$ improving estimates, radial functions, Minkowski and Assouad dimensions, Assouad spectrum}

\begin{abstract}
For a given set of dilations $E\subset [1,2]$, Lebesgue space mapping properties of the spherical maximal operator with dilations restricted to $E$ are studied when acting on radial functions. In higher dimensions, the type set only depends on the upper Minkowski dimension of $E$, and in this case complete endpoint results are obtained. In two dimensions we determine the closure of the $L^p\to L^q$ type set for every given set $E$ in terms of a dimensional spectrum closely related to the upper Assouad spectrum of $E$.
\end{abstract}

\maketitle

\section{Introduction}

For $d \geq 2$ and $f \in L^1_{\mathrm{loc}}({\mathbb R}^d)$, define $A_t f(x)$ as the average of $f$ over a sphere of radius $t$ centered at $x\in\mathbb{R}^d$.
Given a set of dilations $E\subset [1,2]$ the spherical maximal operator $M_E$ is given by
\[ M_E f = \sup_{t\in E} |A_t f(x)|. \]
Let
$L^p_{\mathrm{rad}}\subset L^p$ denote the space of radial $L^p$ functions $f$, that is $f$ taking the form $f(x)=f_0(|x|)$.
In this paper we are interested in the {\em radial type set} of $M_E$, that is
\[ \mathcal{T}_E^{\mathrm{rad}} = \{ (\tfrac1p,\tfrac1q)\in [0,1]^2\,:\, M_E:L^p_{\mathrm{rad}}\to L^q \}. \]
With $\mathcal{T}_E$ denoting the (full) type set of exponent pairs $(\frac1p,\frac1q)\in [0,1]^2$ such that $M_E$ is bounded $L^p\to L^q$ it is clear that $\mathcal{T}_E$ is contained in
$\mathcal{T}_E^{\mathrm{rad}}$. This inclusion is typically strict.

Bounds for spherical maximal functions have been studied extensively in the literature, starting with the work of Stein \cite{SteinPNAS1976} and Bourgain \cite{BourgainJdA1986} in the $q=p$ case and of Schlag \cite{Schlag1997} and Schlag and Sogge \cite{SchlagSogge1997} in the $q > p$ case, whenever $E=[1,2]$; see also the work of Leckband \cite{Leckband} and, more recently, by Nowak, Roncal and Szarek \cite{NowakRoncalSzarek} in the case of radial functions. The case of restricted sets of dilations $E \subset [1,2]$ was first explored by Wainger and Wright and one of the authors \cite{SeegerWaingerWright1995}, and continued in \cite{SeegerWaingerWright1997}, \cite{STW-jussieu} in the $q=p$ case. For the case $q > p$, two of the authors \cite{RoosSeeger} described the class of closed convex sets that may arise as $\overline{\mathcal{T}_E}$ for some $E\subset [1,2]$. Moreover,
for large classes of sets $E$, the shape of $\overline{\mathcal{T}_E}$ was determined in \cite{AHRS}, \cite{RoosSeeger}. However, it is currently not known how to determine $\overline{\mathcal{T}_E}$ for {\it general } subsets $E\subset [1,2]$.

In this paper we solve this problem for radial functions, by describing $\overline{\mathcal{T}_E^{\mathrm{rad}}}$ for all dilation sets $E\subset [1,2]$. When $d\ge 3$ we also settle all endpoint cases,
and thus determine $\mathcal{T}_E^{\mathrm{rad}}$ for all $E\subset [1,2]$.

For $d\ge 3$, the set $\overline{{{\mathcal{T}}}_E^{\mathrm{rad}}}$ is a closed triangle depending on the upper Minkowski dimension of $E$ (Theorem \ref{thm:highdim}).
In the more interesting case $d=2$, the shape of $\overline{{{\mathcal{T}}}_E^{\mathrm{rad}}}$ is not necessarily polygonal and we give a closed formula in terms of a spectrum of dimensional quantities closely related to the upper Assouad spectrum of $E$ (Theorem \ref{thm:maintwodim}).

\subsubsection*{The case $d\ge 3$.} We begin by describing the more elementary result on $L^p_{{\text{\rm rad}}}\to L^q$ boundedness for $d\ge 3$. For $\delta<1$ let $N(E,\delta) $ denote the minimum number of intervals of length $\delta$ required to cover $E$. Let $\beta$ be the {\it upper Minkowski dimension} of $E$, defined by
\[ \beta=\dim_{\mathrm M}E= \limsup_{\delta\to 0} \frac{\log N(E,\delta)}{\log \tfrac 1\delta }.\]
We denote by $\Delta_\beta=\triangle (P_1, P_2, P_3)$ the closed triangle with vertices
\begin{equation*}
P_1= (0,0), \quad P_{2,\beta} =(\tfrac{d-1}{d-1+\beta}, \tfrac{d-1}{d-1+\beta}), \quad P_{3,\beta}^{\mathrm{rad}}:=(\tfrac{d(d-1)}{d^2-1+\beta}, \tfrac{d-1}{d^2-1+\beta}).
\end{equation*}
\begin{thm}\label{thm:highdim}
Let $d\ge 3$ and $E\subset [1,2]$. Then $\overline{\mathcal{T}_E^{\mathrm{rad}}} = \Delta_\beta$.
More precisely:
\begin{enumerate}[(i)]
\item If $\beta<1$ and $\sup_{\delta<1} \delta^\beta N(E,\delta)<\infty$, then
\[\mathcal{T}_E^{\mathrm{rad}}=\Delta_\beta.\]
\item If $\beta<1$ and $\sup_{\delta<1} \delta^\beta N(E,\delta)=\infty$, then
\[ \mathcal{T}_E^{\mathrm{rad}} = \Delta_\beta \backslash [P_{2,\beta}, P_{3,\beta}^{\mathrm{rad}}]. \]
\item If $\beta=1$, then
\[ \mathcal{T}_{E}^\mathrm{rad} = \{ (\tfrac1p,\tfrac1q)\in \Delta_1\,:\, \tfrac1p<\tfrac{d-1}{d}\;\;\text{ or }\;\;
\sup_{\delta\in (0,1)} \delta (\log\tfrac1\delta)^{\frac{q}{d}} N(E,\delta)<\infty \}.
\]
\end{enumerate}
\end{thm}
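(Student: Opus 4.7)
The approach is to reduce to a one-dimensional problem via the explicit formula for the spherical mean acting on radial functions. Writing $f(x)=f_0(|x|)$ and $r=|x|$, one has
\[ A_tf(x)=c_d\int_{-1}^1 f_0\Bigl(\sqrt{r^2+2rtv+t^2}\Bigr)(1-v^2)^{(d-3)/2}\,dv, \]
and for $d\ge 3$ the weight is bounded, so $A_t$ acts on radial functions as an integral operator with kernel supported in $[|r-t|,r+t]$. Two asymptotic regimes govern the analysis: for $r\gg t$, $\sqrt{r^2+2rtv+t^2}\approx r+tv$ and $A_t$ becomes a convolution in $r$ against a bump of length $\sim t$; for $r\ll t$, $\sqrt{\cdot}\approx t+rv$, so $A_tf_0(r)\approx (M^{(r)}f_0)(t)$, the one-dimensional Hardy--Littlewood average of $f_0$ at scale $r$ evaluated at $t$.

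For the boundedness (sufficient conditions) I would decompose $\R^d$ dyadically in $|x|$. On $|x|\sim 2^k$ with $k\ge 1$, locality reduces matters to a 1D maximal convolution $\widetilde M_Eg(r)=\sup_{t\in E}\bigl|\int g(r+tv)\psi_d(v)\,dv\bigr|$ with $\psi_d(v)=(1-v^2)^{(d-3)/2}$, multiplied by the weight factor $2^{k(d-1)(1/q-1/p)}$, which gives a summable geometric series once $q\ge p$. On $|x|\sim 2^{-k}$ with $k\ge 0$, the required bound takes the form
\[ \Bigl(\int_0^1 \sup_{t\in E}|M^{(r)}f_0(t)|^q\,r^{d-1}\,dr\Bigr)^{1/q}\lesssim \|f_0\|_{L^p(r^{d-1}dr)}. \]
Covering $E$ by $N(E,\delta)$ intervals of length $\delta$ and combining single-scale H\"older estimates with vector-valued Hardy--Littlewood bounds, one obtains, after balancing $\delta\sim r$, a sum that converges precisely when $(1/p,1/q)\in\Delta_\beta$, with the vertices $P_{2,\beta}$ and $P_{3,\beta}^{\mathrm{rad}}$ appearing as the critical configurations; the edges through $P_1$ then follow by interpolation with the trivial $L^\infty\to L^\infty$ bound. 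Whether the uniform bound $N(E,\delta)\lesssim \delta^{-\beta}$ holds is exactly the regularity needed to sum the dyadic estimates sharply at the critical edge, distinguishing cases (i) and (ii).

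For the necessary conditions I would use test functions. The edges $[P_1,P_{2,\beta}]$ and $[P_1,P_{3,\beta}^{\mathrm{rad}}]$ come from characteristic functions of a ball near the origin and of a narrow annulus, respectively, while the critical edge $[P_{2,\beta},P_{3,\beta}^{\mathrm{rad}}]$ is obtained by taking $f=\chi_{B(x_0,\delta)}$ with $|x_0|=t_0\in E$: the average $A_tf(x)$ is $\gtrsim \delta^{d-1}$ whenever the sphere $\{|z-x|=t\}$ is tangent to $B(x_0,\delta)$, so $M_Ef\gtrsim \delta^{d-1}$ on a union of annular regions of total $d$-dimensional measure $\gtrsim \delta N(E,\delta)$. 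The $L^p\to L^q$ inequality then forces the exponent relation $(d-1)+(1-\beta)/q\ge d/p$, saturated precisely on the edge $[P_{2,\beta},P_{3,\beta}^{\mathrm{rad}}]$. In case (ii), choosing $\delta_n\to 0$ with $\delta_n^\beta N(E,\delta_n)\to\infty$ and applying the same construction precludes the endpoint.

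The main obstacle is case (iii), $\beta=1$, where the edge $[P_{2,1},P_{3,1}^{\mathrm{rad}}]$ coincides with the critical exponent for the Stein spherical maximal operator and the sharp condition involves a factor $(\log 1/\delta)^{q/d}$. The positive direction requires a log-sensitive dyadic summation that incorporates the logarithmic endpoint behavior of the spherical maximal function on radial functions together with the covering of $E$; the negative direction uses test functions built by stacking small balls at radii taken from a near-extremal cover of $E$, with the logarithmic factor emerging from the number of dyadic scales of $E$ that effectively contribute.
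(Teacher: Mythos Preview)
Your proposal has two genuine gaps.

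\textbf{Necessary conditions.} The test function $\chi_{B(x_0,\delta)}$ with $|x_0|=t_0\in E$ is \emph{not radial}, so it cannot be used to derive necessary conditions for $L^p_{\mathrm{rad}}\to L^q$. (A necessary condition for the full type set $\mathcal T_E$ says nothing about the larger set $\mathcal T_E^{\mathrm{rad}}$.) The fix is easy: the radial function $\chi_{B(0,\delta)}$ already gives the correct constraint, since $A_t\chi_{B(0,\delta)}(x)\gtrsim\delta^{d-1}$ whenever $\bigl||x|-t\bigr|\lesssim\delta$, so $M_E\chi_{B(0,\delta)}\gtrsim\delta^{d-1}$ on $\{x:\dist(|x|,E)\lesssim\delta\}$, a set of measure $\sim\delta N(E,\delta)$. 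For case (iii) the paper uses a different radial function, $g(x)=|x|^{1-d}(\log\tfrac1{|x|})^{(1-d)/d}\bbone_{[\delta^{1/2},\delta^{1/4}]}(|x|)$, to extract the logarithm; your ``stacking small balls'' description does not produce this.

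\textbf{Sufficient conditions.} The more serious issue is that your two asymptotic approximations---$\sqrt{r^2+2rtv+t^2}\approx r+tv$ for $r\gg t$ and $\approx t+rv$ for $r\ll t$---both fail in the transitional region $r\approx t\in E$, and this is precisely where the $\beta$-dependence lives. Concretely, for $r\sim t\sim 1$ the lower endpoint $|r-t|$ of the integral $\int_{|r-t|}^{r+t}K_t(r,s)f_0(s)\,ds$ can be arbitrarily small, and the correct kernel behaves like $s^{d-2}$ near $s=0$; your linearized kernels give instead a factor $\sim (1-v^2)^{(d-3)/2}$, which after the change of variables corresponds to $s^{(d-3)/2}$. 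On the extremal input $f_0=\chi_{[0,\delta]}$ your approximated operator is $\sim\delta^{(d-1)/2}$ near $E$, whereas the true value is $\sim\delta^{d-1}$; the resulting operator is too large to be bounded at $P_{3,\beta}^{\mathrm{rad}}$. The paper's decomposition separates this region as a third piece $\mathfrak M_p$ (the case $r/2<t<3r/2$), and the key device there is to slice the \emph{output} variable by $D_n=\{r:\dist(r,E)\sim 2^{-n}\}$, using $|D_n|\lesssim 2^{-n}N(E,2^{-n})$. Your proposal to ``cover $E$ by $N(E,\delta)$ intervals with $\delta\sim r$'' is the mechanism for the $R_2$-type piece (small $|x|$), which in the paper carries no $E$-dependence at all for $d\ge 3$; it does not substitute for the $D_n$ analysis.
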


\begin{remarka}
For $\beta=1$, and the corresponding endpoint case
$p_d=\frac{d}{d-1}$, the operator
$M_E$ maps $L^p_{{\text{\rm rad}}}$ to $L^q$ if and only if
$q\le dp_d$ and \[\sup_{\delta<1} (\log (1/\delta))^{1/d} N(E,\delta)^{1/q} <\infty.\]
Note that the displayed condition is dependent on $q$, in contrast to the case $\beta<1$, where the endpoint bounds for $p=\frac{d-1+\beta}{d-1}$, $q\le p_d$ involve the $q$-independent condition $\sup_{\delta<1} \delta^\beta N(E,\delta) <\infty$.
\end{remarka}

Figure \ref{figureradvsnonrad} relates the result of Theorem \ref{thm:highdim} to the results for $M_E$ acting on general (not necessarily radial) $L^p$ functions, see \cite{AHRS},
\cite{RoosSeeger}.
\begin{figure}[ht]\label{figureradvsnonrad}
\begin{tikzpicture}[scale=4.25]
\draw (0,0) [->] -- (0,1) node [left] {$\frac1q$};
\draw (0,0) [->] -- (1,0) node [below] {$\frac1p$};

\coordinate (Q1) at (0,0);
\coordinate (Q2) at (.71, .71);
\coordinate (Q3) at (.69, .31);
\coordinate (Q4) at (.45, .15);
\coordinate (Q5) at (.685, .225);
\coordinate (Q6) at (.67, 0);

\fill (Q1) node [left] {$P_1$} circle [radius=.25pt];
\fill (Q2) node [above] {$P_2$} circle [radius=.25pt];
\fill (Q3) node [right] {$P_3$} circle [radius=.25pt];
\fill (Q4) node [below] {$P_4$} circle [radius=.25pt];
\fill (Q5) node [below] {$P_3^{{\text{\rm rad}}}$} circle [radius=.25pt];

\draw[dashed,opacity=.3] (1,0) -- (0,1);
\draw[dashed,opacity=.3] (0,0) -- (1,.33);
\draw[dashed,opacity=.3] (0,0) -- (1,1);
\draw[dashed,opacity=.3] (Q3) -- (Q5);
\draw[dashed,opacity=.3] (Q5) -- (Q6);

\fill[opacity=.2] (Q1) -- (Q2) -- (Q3) -- (Q4) -- cycle;
\fill[opacity=.1] (Q4) -- (Q5) -- (Q3);
\draw[opacity=.6] (Q1) -- (Q2) -- (Q3) -- (Q4) -- cycle;
\end{tikzpicture}
\caption{The triangle $\Delta_\beta$.
}
\end{figure}
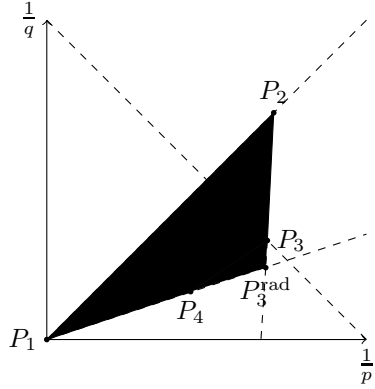
For parameters $0\le \beta\le \gamma\le 1$ let $\mathcal{Q}(\beta,\gamma)$ denote the closed quadrangle with vertices
\begin{equation*}
\begin{gathered}
P_1=(0,0), \qquad \qquad P_{2,\beta} =(\tfrac{d-1}{d-1+\beta}, \tfrac{d-1}{d-1+\beta}),
\\
P_{3,\beta}=(\tfrac{d-\beta}{d-\beta+1}, \tfrac{1}{d-\beta+1}), \qquad P_{4,\gamma}=(\tfrac{d(d-1)}{d^2+2\gamma-1},\tfrac{d-1}{d^2+2\gamma-1}).
\end{gathered}
\end{equation*}
If $\gamma=\dim_{\mathrm{qA}}E$ is the quasi-Assouad dimension of $E$ (for definitions see below) then
$\mathcal{Q}(\beta,\gamma) \subset \overline{\mathcal{T}_E}\subset \mathcal{Q}(\beta,\beta)$; moreover it was shown in \cite{RoosSeeger} that a closed convex set $\mathcal{W}\subset [0,1]^2$ takes the form $\mathcal{W}=\overline{\mathcal{T}_E}$ for some $E$ with ${\dim}_{\mathrm M}E=\beta$, ${\dim}_{\mathrm{qA}}(E)={\gamma}$ if and only if
\[ \mathcal{Q}(\beta,\gamma) \subset{{\mathcal{W}}}\subset \mathcal{Q}(\beta,\beta).\]
We note that the radial type set is strictly larger (here for $d\ge 3$, $\beta<1$): ${{\mathcal{Q}}}(\beta,\beta)\subsetneqq \overline{{{\mathcal{T}}}_E^{{\text{\rm rad}}}}$. This is expected for $q>p$, as the non-radial Knapp examples no longer apply in the radial case.

\subsubsection*{The case $d=2$} We shall now describe our results for $L^p_{{\text{\rm rad}}}\to L^q$ boundedness of $M_E$ when $d=2$.
In order to formulate these we need to first recall some definitions from fractal geometry beyond Minkowski dimension.
The \emph{Assouad dimension}
$\mathrm{dim}_\mathrm{A}E$ is defined as the infimum of all $a>0$ such that there exists $c\in (0,\infty)$ such that for all $\delta\in (0,1)$ and all intervals $J\subset [1,2]$ with $|J|\ge \delta$,
\begin{equation}\label{eqn:NAssouad}
N(E\cap J,\delta)\le c\,\Big(\tfrac{|J|}{\delta}\Big)^a.
\end{equation}

The {\em upper Assouad spectrum} of $E$ is the function $\theta\mapsto \overline{\mathrm{dim}_{\mathrm{A},\theta}}\,E$
defined for every $\theta\in [0,1)$ as the infimum of all $a>0$ such that there exists $c>0$ such that \eqref{eqn:NAssouad} holds
for all $\delta\in (0,1)$ and all intervals $J\subset [1,2]$ of length $|J|\ge \delta^\theta$. The upper Assouad spectrum (Fraser--Hare--Hare--Troscheit--Yu \cite{FHHTY19}) is a variant of the Assouad spectrum (Fraser--Yu \cite{FY18a}, \cite{FY18b}), which arises when $|J|\ge \delta^\theta$ is replaced by $|J|=\delta^\theta$ and is denoted $\dim_{\mathrm{A},\theta}E$. We refer to Jonathan Fraser's book \cite{FraserBook} for an introduction to Assouad-type dimensions. For $\theta=0$ we recover
${\mathrm{dim}}_{\mathrm{M}}\,E=\overline{\mathrm{dim}_{\mathrm{A},0}}\,E$.
The upper Assouad spectrum extends to a continuous function on $[0,1]$ and the limit
\begin{equation*}
\mathrm{dim}_{\mathrm{qA}}E=\lim_{\theta\to 1-} \overline{\mathrm{dim}_{\mathrm{A},\theta}}E
\end{equation*}
is called the {\em quasi-Assouad dimension} (L\"u--Xi \cite{LX16}). From the definitions one sees that always $\dim_\mathrm{qA} E\le \dim_\mathrm{A} E$ and in some examples the inequality is strict; e.g. for $E=\{1+2^{-\sqrt n}: n\in {{\mathbb{N}}}\}$ we have $\dim_{\mathrm{qA}}\!E =0$ and $\dim_{\mathrm A}\!E=1$.

Rutar \cite{Rutar24} gave a simple characterization of the functions that can occur as the (upper) Assouad spectrum of a set.
We shall adopt the convention that when a set $E\subset [1,2]$ is given, we denote
by $\gamma$ either the quasi-Assouad or the Assouad dimension, depending on the context.
Then $0\le \beta\le\gamma \le 1$.

To state our result for $d=2$ we introduce, for $\alpha\in\mathbb{R}$, the quantity
\begin{equation}\label{eq:nudagger}
\nu^\sharp(\alpha) := \limsup_{\delta\to 0} \frac{\log \sup_{\delta\le |J|\le 1} |J|^{-\alpha} N(E\cap J, \delta)}{\log(\frac1\delta)} \,.
\end{equation}
\begin{thm}\label{thm:maintwodim}
Let $d=2$ and $E\subset [1,2]$, and $\beta=\dim_{\mathrm M} E$. Then
\[ \overline{\mathcal{T}_E^{\mathrm{rad}}} = \Delta_\beta \, \cap \, \big\{ (\tfrac1p,\tfrac1q): \tfrac1q \nu^\sharp(\tfrac{q}2-1)+\tfrac1p - \tfrac1q \le \tfrac 12 \big\}. \]

\end{thm}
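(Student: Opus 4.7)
The plan is to reduce $M_E$ on $L^p_\rad(\R^2)$ to a one-dimensional problem via the explicit integral representation of spherical means on radial functions in the plane,
\[
A_t f_0(r)=\frac{2}{\pi}\int_{|r-t|}^{r+t}\frac{f_0(s)\,s\, ds}{\sqrt{\bigl((r+t)^2-s^2\bigr)\bigl(s^2-(r-t)^2\bigr)}},
\]
which exhibits inverse square root singularities at the caustics $s=|r-t|$ and $s=r+t$. A dyadic decomposition in $r\sim R$, $t\sim 1$, and in the distance to the caustics $u\sim 2^{-\ell}$, expresses $A_t^{(\ell)} f_0$ as a weighted average of $f_0$ over a $\delta$-window (with $\delta\sim 2^{-\ell}$) with geometric weight of order $(rt)^{-1/2}u^{-1/2}$.

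For the \emph{sufficiency} direction $\overline{\cT_E^\rad}\supseteq \Delta_\beta\cap\{\cdots\}$, I would estimate $L^p\to L^q$ norms of each dyadic piece by combining a trivial $L^\infty$ bound with an $L^2$ estimate obtained from Plancherel (or Bessel-function asymptotics). The crucial input is the $\delta$-covering bound
\[
N(E\cap J,\delta)\lc_\varepsilon\,\delta^{-\alpha-\varepsilon}\,|J|^\alpha
\]
valid for every $\alpha>\nu^\sharp(q/2-1)$ and every $J\subset[1,2]$ with $\delta\le|J|\le 1$. The factor $(rt)^{-1/2}$ from the 2D radial Jacobian combines with the window length $|J|$ to produce exactly the weight $|J|^{q/2-1}$ when passing from $L^p$ to $L^q$. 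Summing over the two-parameter family of dyadic scales $(\delta,|J|)$ and interpolating with the trivial $L^1$ and $L^\infty$ bounds delivers every $(1/p,1/q)$ in the open region; the $\Delta_\beta$ boundary is handled by the $\alpha=\beta$ choice (as in Theorem \ref{thm:highdim}).

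For the \emph{necessity} direction, I would test $M_E$ against a radial function $f_0=\sum_j\psi_\delta(\cdot-s_j)$, where the points $s_j$ are chosen so that the corresponding dilations $t_j=s_j\pm 1$ form a maximal $\delta$-separated subset of $E\cap J$ for a pair $(\delta,J)$ that nearly realises the $\limsup$ in \eqref{eq:nudagger} at $\alpha=q/2-1$. On the relevant 2D annulus the geometric amplification $(rt)^{-1/2}\sim|J|^{-1/2}$ produces the factor $|J|^{q/2-1}$ in $\Norm{M_E f}_q^q$, while $\Norm{f}_p^p\sim N(E\cap J,\delta)\,\delta$. Comparing these two quantities and letting $\delta\to 0$ through the extremising sequence forces $\tfrac{1}{q}\nu^\sharp(\tfrac{q}{2}-1)+\tfrac{1}{p}-\tfrac{1}{q}\le\tfrac{1}{2}$; the $\Delta_\beta$ part follows from the standard radial-indicator examples.

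The main obstacle is to match $\nu^\sharp(q/2-1)$ sharply on both sides. On the sufficiency side the $\limsup$ in \eqref{eq:nudagger} couples the two scales $\delta$ and $|J|$, so the dyadic decomposition must be performed jointly in both parameters without incurring losses beyond $\delta^{-\varepsilon}$; on the necessity side one must produce a scale pair $(\delta,J)$ that simultaneously saturates both the counting $N(E\cap J,\delta)$ and the $|J|^{q/2-1}$ weight coming from the 2D radial Jacobian. Because $q$ enters the quantity $\nu^\sharp(q/2-1)$ non-trivially, the boundary curve cannot be obtained by Riesz--Thorin interpolation from a single estimate; each point $(1/p,1/q)$ essentially requires its own argument, and keeping track of the endpoint cases on the line $\{\tfrac{1}{q}\nu^\sharp(\tfrac{q}{2}-1)+\tfrac{1}{p}-\tfrac{1}{q}=\tfrac{1}{2}\}$ in closure (rather than the open region) is a delicate additional step.
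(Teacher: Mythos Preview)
Your outline captures the right phenomenology but misses the structural mechanism that makes the $\nu^\sharp$ condition appear. In the paper the integral representation is first split according to the ratio $r/t$ into three regimes (Lemma~\ref{lem:2d reduction}): $t\le r/2$ (operators $R_1^\pm$), $r/2<t<3r/2$ (operators $\fM_p^\pm$), and $t\ge 3r/2$ (operators $R_2^\pm$). The Assouad-type quantity $\nu^\sharp(q/2-1)$ enters \emph{only} through $R_2^\pm$, i.e.\ through the regime where $r=|x|$ is small. There the interval $J$ over which one needs to control $N(E\cap J,\delta)$ is forced by the geometry to have $|J|\sim r$, and it is this identification $|J|\sim r$ that, after integrating $r^{-q/2}\cdot r\,dr$ over $r\sim 2^{-k}$, produces the weight $|J|^{-(q/2-1)}$ in $\omega_m^{p,q}(E,k)$. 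Your proposal speaks of a generic dyadic decomposition in $r\sim R$ and of the Jacobian ``combining with the window length $|J|$'', but without isolating the small-$r$ regime and the link $|J|\sim r$ there is no clear reason why the two scales should couple in the correct way.

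On the technical side, the paper does not proceed via an $L^2$/Plancherel estimate plus interpolation. For $R_2^\pm$ it introduces a resolution of the identity $\delta=u_k+\sum_{m\ge 1}\upsilon_{k+m}*\psi_{k+m}$ adapted to each dyadic $r$-scale $k$, proves the fixed-scale bound \eqref{eq:fixedkAm} via Young's inequality, and then sums over $k$ using a Littlewood--Paley inequality for $\{\psi_{k+m}*f_0\}_k$; this square-function step is what allows the sum in $k$ without loss and is not visible in your plan. For the necessity direction the paper's test function is simpler than yours: a single radial indicator $g_\delta(x)=\bbone_{[t_L-\delta,t_L+\delta]}(|x|)$ suffices (Lemma~\ref{lem: nec hard}), with the $|J|^{-1/2}$ amplification coming from $|x|\in I_t\subset (0,|J|]$ for each $t\in E\cap J$; a sum of bumps is unnecessary and would complicate the $L^p$ norm computation. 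Finally, note that the theorem concerns only the closure $\overline{\cT_E^\rad}$, so your worry about ``keeping track of the endpoint cases on the line'' is not needed here; that is the content of the separate Theorem~\ref{thm:2dendpoint}.
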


Observe $\nu^\sharp(\alpha)=\beta$ if $\alpha\le 0$.
In practice, it may be difficult to compute $\nu^\sharp(\alpha)$ for $\alpha>0$. However, the definition implies for $\alpha\ge 0$ (see Lemma \ref{lem:nusharp}),
\begin{equation}\label{eqn:fracprop}
\max(\alpha,\beta)\le \nu^\sharp(\alpha)\le \max(\alpha, (1-\tfrac{\beta}{\gamma})\alpha+\beta),
\end{equation}
where $\gamma=\dim_{\mathrm{qA}}E$.
In particular,
$\nu^\sharp(\alpha)=\alpha \text{ if $\alpha\ge \gamma$.}$
\begin{remarka} To see how $\nu^\sharp$ relates to the Assouad spectrum, suppose that
\begin{equation*}
\sup_{|J|=\delta^\theta} N(E\cap J,\delta)\approx \delta^{\nu(\theta)}
\end{equation*}
for a function $\nu$ and for all $\delta\in (0,1), \theta\in [0,1]$ with constants independent of $\delta,\theta$.
In this case, the Assouad spectrum is given by
$\dim_\mathrm{A,\theta} E=-\nu(\theta)/(1-\theta)$
for $\theta\in [0,1)$ (and if the Assouad spectrum is non-decreasing, this also equals the upper Assouad spectrum $\overline\dim_{\mathrm{A},\theta}E$).
Then $\nu^\sharp$ is equal to the Legendre transform of the function $\nu$, i.e. the right-hand side of \footnote{Added in the revised version: The identity \eqref{eqn:nusharplegendre} continues to hold for all sets $E\subset [1,2]$, where $\nu(\theta)=-(1-\theta)\dim_{\mathrm{A},\theta}E$ (see \cite[Thm.1.2]{BeltranRoosRutarSeeger}).}
\begin{equation}\label{eqn:nusharplegendre}
\nu^\sharp(\alpha) = \sup_{\theta\in [0,1]} \alpha \theta - \nu(\theta).
\end{equation}
Note that $\nu$ is not necessarily convex.
\end{remarka}

In two dimensions, the triangle $\Delta_{\beta}$ has vertices
\begin{equation*}
P_1= (0,0), \quad P_{2,\beta} =(\tfrac{1}{1+\beta}, \tfrac{1}{1+\beta}), \quad P_{3,\beta}^{\mathrm{rad}}=(\tfrac{2}{3+\beta}, \tfrac{1}{3+\beta}).
\end{equation*}
For $2\gamma-\beta>1$ we denote by ${{\mathcal{Q}}}^{\mathrm{rad}}_{\beta,\gamma}$ the closed quadrangle with vertices
\[ P_1,\, P_{2,\beta},\, P_{4,\gamma}^{\mathrm{rad}}=(\tfrac1{1+\gamma},\tfrac1{2(1+\gamma)}),\]
\[P_{5,\beta,\gamma}^{\mathrm{rad}} = (\tfrac{(1-\beta)(2-\beta/\gamma) + 2(1-\beta/\gamma)}{2( (1-\beta) + 2(1-\beta/\gamma) )},\tfrac{1-\beta/\gamma}{1-\beta + 2(1-\beta/\gamma)}). \]
Then Theorem \ref{thm:maintwodim} and \eqref{eqn:fracprop} imply the following (also see Figure \ref{fig:d=2}).
\begin{cor}\label{cor:d=2}
Let $d=2$ and $E\subset [1,2]$, and $\beta=\dim_{\mathrm M} E$, $\gamma=\dim_{\mathrm{qA}} E$.
\begin{enumerate}[(i)]
\item If $2\gamma-\beta\le 1$, then
\[\overline{\mathcal{T}_E^{\mathrm{rad}}}=\Delta_\beta.\]
\item If $2\gamma-\beta>1$, then
\[ \mathcal{Q}^{\mathrm{rad}}_{\beta,\gamma}\subset \overline{\mathcal{T}_E^{\mathrm{rad}}}\subset \Delta_\beta. \]
Moreover, the left inclusion is sharp in the sense that there exist sets $E$ for which the inclusion is an equality.
\end{enumerate}
\end{cor}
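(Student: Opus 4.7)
The plan is to deduce the corollary from Theorem \ref{thm:maintwodim} by replacing $\nu^\sharp$ with its piecewise-linear upper bound coming from \eqref{eqn:fracprop}. Define $\tilde\nu(\alpha)=\beta$ for $\alpha\le 0$, $\tilde\nu(\alpha)=(1-\beta/\gamma)\alpha+\beta$ for $0\le\alpha\le\gamma$, and $\tilde\nu(\alpha)=\alpha$ for $\alpha\ge\gamma$, so that $\nu^\sharp\le\tilde\nu$ pointwise by \eqref{eqn:fracprop}. Writing
\[H(f)=\bigl\{(\tfrac1p,\tfrac1q):\tfrac1q\,f(\tfrac q2-1)+\tfrac1p-\tfrac1q\le\tfrac12\bigr\},\]
this pointwise inequality gives $H(\tilde\nu)\subseteq H(\nu^\sharp)$, and Theorem \ref{thm:maintwodim} sandwiches
\[\Delta_\beta\cap H(\tilde\nu)\;\subseteq\;\overline{\mathcal{T}_E^{\mathrm{rad}}}\;\subseteq\;\Delta_\beta.\]

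For case (i), I would show that $\Delta_\beta\subseteq H(\tilde\nu)$, so that both inclusions above collapse to equalities. Since $H(\tilde\nu)$ is the intersection of three half-planes (one per piece of $\tilde\nu$), convexity reduces the check to the three vertices of $\Delta_\beta$. The checks at $P_1$ and $P_{2,\beta}$ are immediate. At $P_{3,\beta}^{\mathrm{rad}}$, where $q=3+\beta$ and $\tfrac q2-1=\tfrac{1+\beta}{2}$, the constraint defining $H(\tilde\nu)$ reduces to $\tilde\nu(\tfrac{1+\beta}{2})\le\tfrac{1+\beta}{2}$, which is equivalent to $\gamma\le\tfrac{1+\beta}{2}$, that is, $2\gamma-\beta\le 1$. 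Thus case (i) holds precisely under its hypothesis.

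For case (ii), I would identify $\Delta_\beta\cap H(\tilde\nu)$ as the quadrangle $\mathcal{Q}^{\mathrm{rad}}_{\beta,\gamma}$. The middle-regime constraint, valid for $0\le\tfrac q2-1\le\gamma$, simplifies to
\[\tfrac1p+\bigl(\beta+\tfrac{\beta}{\gamma}-2\bigr)\tfrac1q\le\tfrac{\beta}{2\gamma},\]
and this is the only constraint that can bind on $\Delta_\beta$. (For $\tfrac q2-1\ge\gamma$ the constraint reduces to $\tfrac1p\le\tfrac2q$, which is implicit in $\Delta_\beta$; for $\tfrac q2-1\le 0$ it is slack at $P_{2,\beta}$ and hence throughout the subregion of $\Delta_\beta$ with $q\le 2$.) The line above meets the bottom edge $\tfrac1q=\tfrac{1}{2p}$ of $\Delta_\beta$ at the kink point $\tfrac q2-1=\gamma$, giving $P_{4,\gamma}^{\mathrm{rad}}$, and meets the upper edge (equation $2\tfrac1p+(\beta-1)\tfrac1q=1$) at $P_{5,\beta,\gamma}^{\mathrm{rad}}$. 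A short algebraic calculation verifies these coordinate formulas.

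For the sharpness assertion, I would construct, for each admissible pair $(\beta,\gamma)$ with $2\gamma-\beta>1$, an explicit set $E\subseteq[1,2]$ realizing $\overline{\mathcal{T}_E^{\mathrm{rad}}}=\mathcal{Q}^{\mathrm{rad}}_{\beta,\gamma}$. Equivalently, one wants $\nu^\sharp$ to attain $\tilde\nu$ at the critical exponent $\alpha_*=\tfrac{1-\beta}{2(1-\beta/\gamma)}\in[0,\gamma]$ corresponding to $P_{5,\beta,\gamma}^{\mathrm{rad}}$. A natural candidate is a two-scale Moran-type construction: along a sparse sequence of scales $\delta_k\downarrow 0$, distribute approximately $\delta_k^{-\beta}$ points into clusters of length $\asymp\delta_k^{\beta/\gamma}$, each cluster itself an AD-$\gamma$-regular $\delta_k$-separated set, so that globally $N(E,\delta_k)\asymp\delta_k^{-\beta}$ while locally $N(E\cap J,\delta_k)\asymp(|J|/\delta_k)^\gamma$ at the critical interval scale. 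The main obstacle will be calibrating the scale sequence, the cluster sizes, and the inter-scale transitions so that $\dim_{\mathrm M}E=\beta$ and $\dim_{\mathrm{qA}}E=\gamma$ come out exactly (and not strictly larger) and $\nu^\sharp(\alpha_*)=\tilde\nu(\alpha_*)$ is attained; this fractal-geometric bookkeeping is the core technical step of the corollary's second half.
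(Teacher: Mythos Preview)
Your argument for the inclusions in (i) and (ii) is correct and matches the paper's proof. The observation that $\tilde\nu=\max\{\beta,\,(1-\beta/\gamma)\alpha+\beta,\,\alpha\}$ makes $H(\tilde\nu)$ a genuine intersection of three half-planes is a clean way to organize the vertex check; the paper does essentially the same computation but phrases it as verifying condition (c) at $P_{3,\beta}^{\mathrm{rad}}$, $P_{4,\gamma}^{\mathrm{rad}}$ and $P_{5,\beta,\gamma}^{\mathrm{rad}}$ directly.

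The sharpness part has a real gap. Your ``equivalently'' claim --- that it suffices to have $\nu^\sharp(\alpha_*)=\tilde\nu(\alpha_*)$ at the single exponent $\alpha_*$ corresponding to $P_{5,\beta,\gamma}^{\mathrm{rad}}$ --- is not correct. At $\alpha_*$ one computes $\tilde\nu(\alpha_*)=\tfrac{1+\beta}{2}$, and the condition $\nu^\sharp(\alpha_*)=\tfrac{1+\beta}{2}$ only says that on the horizontal line through $P_5$ the $\nu^\sharp$-constraint meets the $[P_{2,\beta},P_{3,\beta}^{\mathrm{rad}}]$ edge of $\Delta_\beta$; it does not exclude any point of $\Delta_\beta$ at that height. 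To force $\overline{\mathcal{T}_E^{\mathrm{rad}}}=\mathcal{Q}^{\mathrm{rad}}_{\beta,\gamma}$ one must have the $\nu^\sharp$-constraint trace out the entire edge $[P_{5,\beta,\gamma}^{\mathrm{rad}},P_{4,\gamma}^{\mathrm{rad}}]$, which requires $\nu^\sharp(\alpha)=\tilde\nu(\alpha)$ for all $\alpha\in[\alpha_*,\gamma]$, not just at $\alpha_*$. (Indeed, $\nu^\sharp$ could be constant equal to $\tfrac{1+\beta}{2}$ on $[\alpha_*,\tfrac{1+\beta}{2}]$ and then follow the diagonal; in that scenario $P_{3,\beta}^{\mathrm{rad}}$ itself would lie in the closed type set, giving $\overline{\mathcal{T}_E^{\mathrm{rad}}}=\Delta_\beta$.)

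Separately, you are working harder than necessary on the construction. The paper does not build a Moran set from scratch; it simply cites existing examples for which $\nu^\sharp=\tilde\nu$ identically: the $(\beta,\gamma)$-Assouad regular sets from \cite[\S6.2]{RoosSeeger}, the convex sequences $E=\{1+n^{1-1/\beta}:n\ge 1\}$ when $\gamma=1$, and the Moran constructions in \cite{Rutar24}. Any of these gives $\nu^\sharp(\alpha)=\max(\alpha,(1-\beta/\gamma)\alpha+\beta)$ for all $\alpha\ge 0$, which is exactly what is needed, and the ``core technical step'' you anticipate disappears.
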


\begin{figure}
\begin{tikzpicture}[scale=8]
\draw (0,0) [->] -- (0,1) node [left] {$\frac1q$};
\draw (0,0) [->] -- (1,0) node [below] {$\frac1p$};

\coordinate (P1) at (0,0);
\coordinate (P2) at (.667, .667);
\coordinate (P3rad) at (.571, .286);
\coordinate (P4rad) at (.5, .25);
\coordinate (P5rad) at (.583, .333);

\def\ptsize{.15pt}

\fill (P1) node [left] {$P_1$} circle [radius=\ptsize];
\fill (P2) node [above] {$P_2$} circle [radius=\ptsize];
\fill (.571, .286) circle [radius=\ptsize];
\fill (.63, .275) node {$P_3^\mathrm{rad}$} circle [radius=0pt];
\fill (P4rad) node [below] {$P_4^\mathrm{rad}$} circle [radius=\ptsize];
\fill (P5rad) node [right] {$P_5^\mathrm{rad}$} circle [radius=\ptsize];

\draw[dashed,opacity=.3] (0,.25) -- (1,.25);
\draw[dashed,opacity=.3] (0,0) -- (1,1);
\draw[dashed,opacity=.3] (0,1) -- (1,0);
\draw[dashed,opacity=.3] (0,0) -- (1,.5);

\draw[opacity=.6] (P1) -- (P2) -- (P5rad)-- (P4rad) -- cycle;
\fill[opacity=.2] (P1) -- (P2) -- (P5rad)-- (P4rad) -- cycle;

\draw[opacity=.5] (P1) -- (P2) -- (P3rad) -- cycle;
\fill[opacity=.1] (P1) -- (P2) -- (P3rad) -- cycle;
\end{tikzpicture}

\caption{If $d=2, \beta=0.5, \gamma=1$, then $\overline{\mathcal{T}^\mathrm{rad}_E}$ is contained in $\Delta_\beta$ and contains $\mathcal{Q}_{\beta,\gamma}^\mathrm{rad}$. The condition $2\gamma-\beta>1$ says that the point $P_3^\mathrm{rad}=P_{3,\beta}^\mathrm{rad}$ lies above the horizontal line $q=2(1+\gamma)$.
}
\label{fig:d=2}
\end{figure}
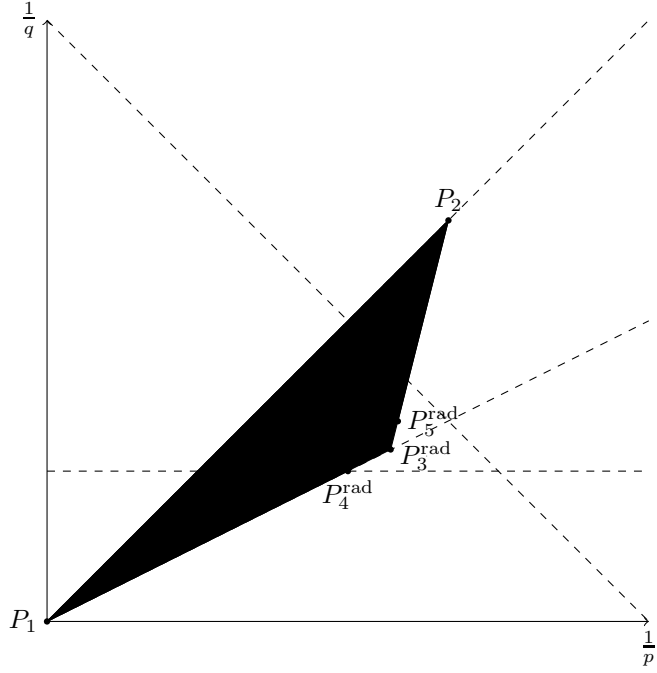

Regarding the boundary of ${{\mathcal{T}}}_E^{\mathrm{rad}}$, it was shown in \cite{SeegerWaingerWright1997} that it includes the segment $[P_1,P_{2,\beta})$. In the $L^p_{\mathrm{rad}} \to L^q$ category we can ensure the following endpoint results, which feature the Assouad dimension instead of the quasi-Assouad dimension.

\begin{thm}\label{thm:2dendpoint}
Let $d=2$, $E \subset [1,2]$, $\beta=\dim_{\mathrm M}E$ and $\gamma= {\dim}_{\mathrm{A}}E.$ Then the following hold.

\begin{enumerate}[(i)]
\item \label{item:2b-g<1}
For $2\gamma-\beta<1$, \[ {{\mathcal{T}}}_E^{\mathrm{rad}}=\Delta_\beta \,\iff \sup_{\delta<1} \delta^\beta N(E,\delta)< \infty.\]

\item
For $2{\gamma}-\beta=1$, $\beta<1$ and if $\sup_{0<\delta <1/2} \delta^\beta N(E,\delta)< \infty$,
\[\Delta_\beta\setminus \{P_{3,\beta}^{{\text{\rm rad}}} \} \subset {{\mathcal{T}}}_E^{\mathrm{rad}} \subset\Delta_\beta \]
\item
For $2\gamma-\beta=1 $ and if $\sup_{\delta<1} \delta^\beta N(E,\delta)= \infty$, then \[{{\mathcal{T}}}_E^{\mathrm{rad}}= \Delta_\beta \backslash [P_{2,\beta},P_{3,\beta}^{\mathrm{rad}}].\]
\item For $2\gamma-\beta > 1$, then $[P_1, P_{4,\gamma}^{\mathrm{rad}}) \subset {{\mathcal{T}}}_E^{\mathrm{rad}}$.
\item If $\beta=1$ and if $\sup_{\delta<1} \delta\log(\frac 1\delta) N(E,\delta)=\infty$ then $M_E:L^p_{{\text{\rm rad}}}\to L^q$ is bounded if and only if $p>2$ and $p\le q\le 2p.$
\end{enumerate}
\end{thm}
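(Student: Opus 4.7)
The plan is to leverage Theorem~\ref{thm:maintwodim}, which already identifies $\overline{\cT_E^{\mathrm{rad}}}$, and to refine it by analyzing which boundary points of $\Delta_\beta$ (or $\cQ_{\beta,\gamma}^{\mathrm{rad}}$) actually belong to $\cT_E^{\mathrm{rad}}$ under the stated regularity on $E$. The argument is two-sided: sufficient-side endpoint bounds via single-scale estimates summed dyadically, and necessary-side counterexamples via radial annular test functions.

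For the sufficient direction, I first reduce $M_E$ on radial $f = f_0(|\cdot|)$ to a one-dimensional operator; in $d=2$ this is the explicit formula
\[
A_t f(x) \;=\; \frac{2}{\pi}\int_{|r-t|}^{r+t} \frac{\rho\, f_0(\rho)\,d\rho}{\sqrt{((r+t)^2-\rho^2)(\rho^2-(r-t)^2)}}, \qquad r=|x|,
\]
with square-root Jacobian singularities at $\rho=r\pm t$. A Littlewood--Paley decomposition $f=\sum_k P_k f$ at frequency scale $2^k = \delta^{-1}$, combined with a single-scale estimate of the form
\[
\Big\|\sup_{t\in E_\delta}|A_t P_k f|\Big\|_q \;\lesssim\; N(E,\delta)^{1/q}\, 2^{-k\sigma(p,q)}\,\|P_k f\|_p
\]
(for $E_\delta\subset E$ a maximal $\delta$-separated subset of cardinality $\sim N(E,\delta)$ and $\sigma(p,q)$ the radial decay exponent at $(\tfrac1p,\tfrac1q)$), reduces the problem to a geometric sum over $k$. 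Under the Minkowski hypothesis $N(E,\delta)\lesssim \delta^{-\beta}$ of parts (i)--(ii), (v), the sum converges throughout the interior of $\Delta_\beta$, and the boundary corners $P_{2,\beta}$ and $P_{3,\beta}^{\mathrm{rad}}$ are reached by real interpolation with restricted weak-type bounds in the spirit of Bourgain and the radial refinement of Nowak--Roncal--Szarek. Part (iv) replaces Minkowski by the Assouad covering $N(E\cap J,\delta)\lesssim(|J|/\delta)^\gamma$, whose uniformity in $|J|$ is precisely what allows the argument to reach $P_{4,\gamma}^{\mathrm{rad}}$; part (v) uses the corresponding log-sharp version at $\beta=1$.

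For the necessary direction, given a failure of the relevant regularity, pick $\delta_j\to 0$ with $\delta_j^\beta N(E,\delta_j)\to\infty$ and a $\delta_j$-separated set $\{t_1,\dots,t_{N_j}\}\subset E$ of size $N_j\sim N(E,\delta_j)$. The radial indicator $f_j=\mathbf{1}_{\{1\le|x|\le 1+\delta_j\}}$ satisfies $\|f_j\|_p\sim \delta_j^{1/p}$, and by $\delta_j$-separation the annular sets $\{x:|x|\in[1+t_n-c\delta_j,1+t_n+c\delta_j]\}$ are pairwise disjoint with $M_E f_j\gtrsim 1$ on their union, giving $\|M_E f_j\|_q\gtrsim(N_j\delta_j)^{1/q}$. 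The ratio $(N_j\delta_j)^{1/q}\delta_j^{-1/p}$ becomes unbounded precisely along the edge $[P_{2,\beta},P_{3,\beta}^{\mathrm{rad}}]$, proving necessity in (i) and (iii), and, after tracking the logarithm at $\beta=1$, in (v).

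The main obstacle is the critical borderline case $2\gamma-\beta=1$ of parts (ii)--(iii): the spectrum $\nu^\sharp$ transitions exactly at $\alpha=(1+\beta)/2$, which is the parameter corresponding to $P_{3,\beta}^{\mathrm{rad}}$. Consequently, even under the Minkowski hypothesis an extra logarithmic loss appears at this corner, preventing a strong-type bound in (ii); and when Minkowski fails, this log-loss combines with the divergence of $\delta_j^\beta N_j$ to eliminate the whole edge $[P_{2,\beta},P_{3,\beta}^{\mathrm{rad}}]$ in (iii). Calibrating the log-losses on both sides so that the characterization in (iii) and the sharp exclusion in (ii) close exactly is the technically most delicate step, and relies on a quantitative refinement of $\nu^\sharp$ at the critical $\alpha$. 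A secondary difficulty is the passage from the quasi-Assouad dimension used in Theorem~\ref{thm:maintwodim} to the stronger Assouad hypothesis required in (iv), which demands uniformity over all interval lengths rather than an asymptotic condition as $\theta\to 1^-$.
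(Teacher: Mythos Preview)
Your necessary-direction argument contains a concrete error. For the annular test function $f_j=\mathbf{1}_{\{1\le|x|\le 1+\delta_j\}}$ in $d=2$, the circle of radius $t_n$ centered at $x$ with $|x|\approx 1+t_n$ is \emph{tangent} to the inner boundary $\{|y|=1\}$; a short computation ($|y|^2\approx 1+Rt\,\theta^2$) shows the arc of intersection has angular measure $O(\sqrt{\delta_j})$, so $M_E f_j(x)\lesssim\sqrt{\delta_j}$, not $\gtrsim 1$. With the correct lower bound the ratio becomes $\delta_j^{1/2+1/q-1/p}N_j^{1/q}$, and on the edge $[P_{2,\beta},P_{3,\beta}^{\mathrm{rad}}]$ (where $(1-\beta)/q=2/p-1$) this equals $\delta_j^{1/p-1/2}(N_j\delta_j^\beta)^{1/q}$; since $1/p>1/2$ everywhere on this edge when $\beta<1$, the positive power of $\delta_j$ kills any divergence of $N_j\delta_j^\beta$. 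Worse, your stated ratio $(N_j\delta_j)^{1/q}\delta_j^{-1/p}$ would diverge on this edge even when $\sup_\delta\delta^\beta N(E,\delta)<\infty$, contradicting part~(i). The paper instead uses the ball $f_\delta=\mathbf{1}_{B(0,\delta)}$ at the origin (Lemma~\ref{lem: nec easy}): for $|x|\approx t\in E$ the sphere $S_t(x)$ passes \emph{through} the origin and meets $B(0,\delta)$ in a cap of measure $\sim\delta^{d-1}$, yielding $\|M_E f_\delta\|_q/\|f_\delta\|_p\gtrsim N(E,\delta)^{1/q}\delta^{d-1+1/q-d/p}$, which on the critical edge is exactly $(N(E,\delta)\delta^\beta)^{1/q}$. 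For part~(v) one also needs the logarithmically weighted radial function of Lemma~\ref{lem:lowerbd-pd}, not an annulus.

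Your sufficient direction is also too schematic to reach the endpoints. The paper does not use a frequency-side Littlewood--Paley decomposition of $f$; it reduces via the explicit kernel \eqref{eq:Kt-polar} to one-dimensional operators $R_1^\pm,R_2^\pm,\fM_p^\pm$ (Lemma~\ref{lem:2d reduction}) and treats each separately. The decisive step for parts (ii)--(iii) is obtaining endpoint bounds for $R_2^\pm$ on the open segment $(P_{2,\beta},P_{3,\beta}^{\mathrm{rad}})$: this is done by interpolating the diagonal $L^{p_0}\to L^{p_0}$ bound of Lemma~\ref{lem:R2-p=q} (valid for $p_0>\max\{1,2\beta\}$) against the off-diagonal $L^{p_1}\to L^{2p_1}$ bound of Proposition~\ref{prop:R3-better} (valid for $p_1>1+\gamma$, using the Assouad dimension). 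No restricted weak-type bounds and no log-losses enter; the exclusion of $P_{3,\beta}^{\mathrm{rad}}$ in (ii) arises simply because the admissible interpolation nodes $p_1>1+\gamma$ form an open condition that degenerates precisely at this corner when $2\gamma-\beta=1$.
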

For the case $\beta=\gamma<1$, part \eqref{item:2b-g<1} gives a characterization of the radial type set; this applies to self-similar set such as the Cantor middle third set. Note that the last part applies to the full set $E=[1,2]$, in which case $M_E:L^p_{{\text{\rm rad}}}\to L^q$ is bounded if and only if $p>2$, $p\le q\le 2p.$

\begin{remarka} For $2{\gamma}-\beta>1$, $\beta<1$, $L^p$ boundedness holds for a part of $[P_{2,\beta},P_{5,\beta, \gamma} ^{{\text{\rm rad}}})$ if $\sup_{\delta<1} \delta^\beta N(E,\delta)<\infty.$ However, it is currently an open question whether $L^p\to L^q$ boundedness holds for all $(\frac 1p, \frac 1q)\in [P_{2,\beta}, P_{5,\beta,\gamma}^{{\text{\rm rad}}}).$
\end{remarka}

\subsubsection*{Notation}

Given a list of objects $L$ and real numbers $A$, $B \geq 0$, here and throughout we write $A \lesssim_L B$ or $B \gtrsim_L A$ to indicate $A \leq C_L B$ for some constant $C_L$ which depends only on items in the list $L$. We write $A \sim_L B$ to indicate $A \lesssim_L B$ and $B \lesssim_L A$.

\subsubsection*{Structure of the paper}
\begin{itemize}
\item \S \ref{sec:nusharp} discusses properties of the dimensional quantity $\nu^\sharp$, in particular proving \eqref{eqn:fracprop} and Corollary \ref{cor:d=2} assuming Theorem \ref{thm:maintwodim}.
\item \S\ref{sec:nec} concerns necessary conditions for $L^p_{\mathrm{rad}} \to L^q$ boundedness of $M_E$.
\item In \S\ref{sec:higher} the upper bounds for $d\ge 3$ in Theorem \ref{thm:highdim} are proved.
\item In \S \ref{sec:2d}, the upper bounds for $d=2$ and Theorems \ref{thm:maintwodim} and \ref{thm:2dendpoint} are proved.
\end{itemize}

\subsubsection*{Acknowledgements}

This research was supported through the program {Oberwolfach Research Fellows} by Mathematisches Forschungsinstitut Oberwolfach in 2023. The authors were supported in part by the grants RYC2020-029151-I and PID2022-140977NA-I00, funded by MICIU/AEI/10.13039/501100011033, by ``ESF Investing in your future" and by FEDER, UE (D.B.) and by the National Science Foundation grants
DMS-2154835 (J.R.), DMS-2348797 (A.S.). J.R. also thanks the Hausdorff Research Institute for Mathematics in Bonn for providing a pleasant working environment during the Fall 2024 trimester program.
We thank Alex Rutar for helpful comments on a previous version of this paper.

\subsubsection*{Remark} Shuijiang Zhao \cite{ZhaoSh24} has independently obtained results for spherical maximal operators on radial functions that overlap with ours. His results in two dimensions are essentially sharp in the special case of quasi-Assouad regular sets (or finite unions of those).

\section{Properties of \texorpdfstring{$\nu^\sharp$}{ dimensional quantities}}\label{sec:nusharp}
Let $E\subset [1,2]$ be nonempty, $\alpha\in\mathbb{R}$ and $\nu^\sharp(\alpha)$ as in \eqref{eq:nudagger}.
Equivalently, $\nu^\sharp(\alpha)$ equals the infimum over all $\nu\ge 0$ such that for every $\varepsilon>0$ there exists $C_\varepsilon\in (0,\infty)$ such that for all intervals $J\subset [1,2]$ with $|J|\ge \delta$,
\[ N(E\cap J,\delta) \le C_\varepsilon |J|^\alpha \delta^{-\nu-\varepsilon}. \]
If $\alpha\le 0$, then $\nu^\sharp(\alpha)=\beta$. If $\alpha>0$, then $\nu^\sharp(\alpha)$ may be difficult to compute, but we have the following bounds.
\begin{lemma}\label{lem:nusharp}
Let $\gamma=\mathrm{dim}_\mathrm{qA}\,E>0$. For all $\alpha\ge 0$,
\[\max(\alpha,\beta)\le\nu^\sharp(\alpha)\le \max(\alpha, (1-\tfrac{\beta}{\gamma})\alpha+\beta).\]
In particular, $\nu^\sharp(\alpha)=\alpha$ if $\gamma\ge \alpha$.
\end{lemma}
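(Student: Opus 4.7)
The lower bound $\nu^\sharp(\alpha)\ge \alpha$ is immediate from the definition: fixing any $x_0\in E$ and letting $J$ be the interval of length $\delta$ centered at $x_0$, one has $N(E\cap J,\delta)\ge 1$, so $|J|^{-\alpha}N(E\cap J,\delta)\ge \delta^{-\alpha}$, and plugging into \eqref{eq:nudagger} forces $\nu^\sharp(\alpha)\ge \alpha$.

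For the upper bound the plan is to interpolate two standard covering estimates for $E$. The first is the Minkowski bound $N(E\cap J,\delta)\le N(E,\delta)\le C_\varepsilon\delta^{-\beta-\varepsilon}$, valid for every interval $J$. The second comes from $\mathrm{dim}_{\mathrm{qA}}E=\gamma$: for every $\varepsilon>0$ one may choose $\theta_\varepsilon\in (0,1)$, as close to $1$ as desired, such that $\overline{\mathrm{dim}_{\mathrm{A},\theta_\varepsilon}}\,E\le \gamma+\varepsilon$, giving
\[ N(E\cap J,\delta)\le C_\varepsilon (|J|/\delta)^{\gamma+\varepsilon}\qquad\text{whenever }|J|\ge \delta^{\theta_\varepsilon}. \]

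Fix $\varepsilon>0$ and split according to the size of $|J|$. For $|J|\ge \delta^{\theta_\varepsilon}$ with $\alpha\le\gamma+\varepsilon$, I would write $N\le N^\lambda N^{1-\lambda}$, estimate the two factors by the Minkowski and quasi-Assouad bounds respectively, and choose $\lambda=1-\alpha/(\gamma+\varepsilon)\in[0,1]$ so that the $|J|$-exponent on the right becomes exactly $\alpha$. A short computation gives
\[ N(E\cap J,\delta)\le C_\varepsilon\,|J|^\alpha\,\delta^{-\beta-\alpha(\gamma-\beta)/(\gamma+\varepsilon)-\varepsilon}, \]
whose $\delta$-exponent tends to $-((1-\beta/\gamma)\alpha+\beta)$ as $\varepsilon\to 0$. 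In the complementary range $\alpha>\gamma$ the interpolation breaks down, but the quasi-Assouad bound alone combined with $|J|^{\gamma+\varepsilon-\alpha}\le \delta^{\gamma+\varepsilon-\alpha}$ (valid since the exponent is negative and $|J|\ge\delta$) yields $|J|^{-\alpha}N\le C_\varepsilon\delta^{-\alpha}$, so $\nu^\sharp(\alpha)\le \alpha$.

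It remains to treat the leftover range $\delta\le|J|<\delta^{\theta_\varepsilon}$, the only place where extra care is required. I would enlarge $J$ to an interval $J'\supset J$ of length exactly $\delta^{\theta_\varepsilon}$ and apply the quasi-Assouad bound to $J'$, obtaining $N(E\cap J,\delta)\le N(E\cap J',\delta)\le C_\varepsilon\delta^{-(1-\theta_\varepsilon)(\gamma+\varepsilon)}$. Combined with $|J|^{-\alpha}\le \delta^{-\alpha}$ this produces
\[ |J|^{-\alpha}N(E\cap J,\delta)\le C_\varepsilon\delta^{-\alpha-(1-\theta_\varepsilon)(\gamma+\varepsilon)}, \]
and taking $\theta_\varepsilon$ sufficiently close to $1$ (permitted at the cost of worsening $C_\varepsilon$, which is harmless) absorbs the error $(1-\theta_\varepsilon)(\gamma+\varepsilon)$ into the $\varepsilon$. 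Combining with the previous case and letting $\varepsilon\to 0$ produces the bound $\nu^\sharp(\alpha)\le\max(\alpha,(1-\beta/\gamma)\alpha+\beta)$; the claimed equality $\nu^\sharp(\alpha)=\alpha$ when $\alpha\ge\gamma$ is then automatic since the upper and lower bounds coincide in that range. The only subtle point is choosing $\theta_\varepsilon$ depending on $\varepsilon$ so that the Case~2 error does not dominate, and this is built into the definition of $\mathrm{dim}_{\mathrm{qA}}E$.
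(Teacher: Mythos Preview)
Your argument is correct and uses the same two inputs as the paper (the Minkowski bound and the quasi-Assouad bound), combined in essentially the same way. The only cosmetic differences are that you package the Case~1 combination as a geometric-mean interpolation $N\le N^\lambda N^{1-\lambda}$ with $\lambda=1-\alpha/(\gamma+\varepsilon)$, whereas the paper instead splits at the crossover $\theta=1-\beta/\gamma$ where the two bounds coincide; and for $|J|<\delta^{\theta_\varepsilon}$ you enlarge $J$ and reuse the quasi-Assouad bound, while the paper invokes the trivial estimate $N(E\cap J,\delta)\le |J|/\delta+1$ in that regime. Both lead to the same conclusion.
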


\begin{proof}
Choosing an interval $J$ of length $|J|=\delta$ shows that $\nu^\sharp(\alpha)\ge \alpha$ and choosing $J=[1,2]$ shows $\nu^\sharp(\alpha)\ge \beta$.

To prove the upper bound we use three different estimates for $N(E\cap J,\delta)$ depending on the magnitude of $|J|$.
For the remainder of this proof set $\theta=\theta(J)\in [0,1]$ such that
$|J|=\delta^\theta.$
We note the following:
\begin{itemize}
\item[-] For $\theta$ near $0$ we can estimate $N(E\cap J,\delta)\le N(E,\delta)$. Thus, by the definition of the upper Minkowski dimension $\beta$, for every
$\varepsilon>0$ there exists $C'_\varepsilon\in (0,\infty)$ such that
\begin{equation}\label{eqn:nusharppf1}
N(E\cap J,\delta)\le C'_\varepsilon \delta^{-(\beta+\varepsilon)}
\end{equation}
for all $J\subset [1,2]$.
\medskip
\item[-] For $\theta$ away from $0$ and away from $1$, we will use the definition of the quasi-Assouad dimension $\gamma$: for every $\varepsilon>0$ there exists $C_{\varepsilon}\in (0,\infty)$ such that
\begin{equation}\label{eqn:nusharppf2}
N(E\cap J,\delta) \le C_{\varepsilon} \delta^{-(1-\theta)(\gamma+\varepsilon)}
\end{equation}
for all $J\subset [1,2]$ with $\theta\le 1-\varepsilon$.
\medskip
\item[-] For $\theta$ close to $1$ we
estimate $N(E\cap J,\delta)\le N(J,\delta)$, which gives
\begin{equation}\label{eqn:nusharppf3}
N(E\cap J,\delta) \le \delta^{-(1-\theta)} +1
\end{equation}
for all $J\subset [1,2]$.
\end{itemize}

With this in mind, consider two cases.
\medskip

{\em Case 1:} $\alpha\le \gamma$. Then we need to show $\nu^\sharp(\alpha)\le (1-\frac\beta\gamma)\alpha+\beta$.
Note that the exponents on the right-hand sides of \eqref{eqn:nusharppf1} and \eqref{eqn:nusharppf2} coincide up to $\varepsilon$ when $\theta=1-\frac\beta\gamma$. This motivates the estimate
\[ N(E\cap J,\delta) |J|^{-\alpha}\le
\begin{cases}
C'_\varepsilon \delta^{-(\alpha\theta + \beta + \varepsilon)} &\text{ if } \theta\le 1-\frac\beta\gamma
\\
C_\varepsilon \delta^{-(-\theta(\gamma-\alpha) + \gamma+\varepsilon)} &\text{ if } 1-\frac\beta\gamma< \theta\le 1-\varepsilon
\\
\delta^{-(\alpha\theta+\varepsilon)}+1 &\text{ if } 1-{\varepsilon} \le \theta\le 1.
\end{cases}
\]
In the first two cases the largest value is assumed when $\theta=1-\frac\beta\gamma$, and in the third case the largest value is assumed when $\theta=1$.
Hence
\[ N(E\cap J,\delta)|J|^{-\alpha} \le \max(C_\varepsilon,C'_\varepsilon) \delta^{-(\alpha(1-\frac\beta\gamma)+\beta+\varepsilon)} + \delta^{-(\alpha+\varepsilon)} +1. \]
Since $\alpha\le (1-\frac\beta\gamma)\alpha+\beta$, the claim follows.

{\em Case 2:} $\alpha\ge \gamma$.
Then $\alpha\ge (1-\frac\beta\gamma)\alpha+\beta,$ so we need to show $\nu^\sharp(\alpha)\le \alpha$.
By \eqref{eqn:nusharppf2} and \eqref{eqn:nusharppf3},
\[ N(E\cap J,\delta) |J|^{-\alpha} \le C_{\varepsilon} \delta^{-(\theta(\alpha-\gamma) + \gamma+\varepsilon)} {{\mathbbm 1}}_{\{\theta\le 1-\varepsilon\}} + ( \delta^{-(\alpha\theta + \varepsilon)}+1) {{\mathbbm 1}}_{\{1\ge \theta>1-\epsilon\}} \]
Since $\alpha-\gamma\ge 0$, the first term on the right hand side is $\le C_\varepsilon \delta^{-((1-\varepsilon)(\alpha-\gamma)+\gamma+\varepsilon)},$
which is $\le C_\varepsilon \delta^{-(\alpha+\varepsilon')}$
with $\varepsilon'=\varepsilon (1-(\alpha-\gamma)),$
and the second term is $\le \delta^{-(\alpha+\varepsilon)}$.
By definition of $\nu^\sharp(\alpha)$, this implies $\nu^\sharp(\alpha)\le \alpha$.
\end{proof}

We now discuss how to deduce Corollary \ref{cor:d=2} from Theorem \ref{thm:maintwodim}. We note that this implication is unrelated to spherical averaging operators.

\begin{proof}[Proof of Corollary \ref{cor:d=2}, given Theorem \ref{thm:maintwodim}]

By Theorem \ref{thm:maintwodim}, $\overline{{{\mathcal{T}}}_E^{\mathrm{rad}}} \subset \Delta_\beta$.
For the other inclusion, again by Theorem \ref{thm:maintwodim}, note that $(\frac1p,\frac1q)\in\overline{\mathcal{T}_E^{\mathrm{rad}}}$ if and only if
\begin{enumerate}[(a)]
\item $\frac{1}{q} \leq \frac{1}{p}$,
\item $\frac{2}{p} + \frac{\beta-1}{q} \leq 1$, and
\item $\frac{1}{q}\nu^{\sharp}(\frac{q}{2}-1) + \frac{1}{p} - \frac{1}{q} \leq \frac{1}{2}$.
\end{enumerate}
Since $\nu^{\sharp}(\alpha) \geq \alpha$, the scaling condition $\frac{1}{q}\geq \frac{1}{2p}$ follows from (c) for $q \geq 2$ (and the condition also holds for $q\le 2$ since $p\ge 1$).
\medskip

(i) Assume that $2\gamma - \beta \leq 1$. From \eqref{eqn:fracprop} we have $\nu^\sharp(\frac{q}{2}-1)=\frac{q}{2}-1$ if $\frac{1}{q} \leq \frac{1}{2(1+\gamma)}$. Note $P_{3,\beta}^{\mathrm{rad}}=(\tfrac{2}{3+\beta}, \frac{1}{3+\beta})$ and $\frac{1}{3+\beta} \leq \frac{1}{2(1+\gamma)}$ for $2\gamma - \beta \leq 1$, therefore $P_{3,\beta}^{\mathrm{rad}} \in \overline{{{\mathcal{T}}}_E^{\mathrm{rad}}}$. Hence $\overline{{{\mathcal{T}}}_E^{\mathrm{rad}}} = \Delta_\beta$ by convexity, since $P_1, P_{2,\beta} \in \overline{{{\mathcal{T}}}_E^{{\text{\rm rad}}}}$.

\medskip

(ii) Assume that $2\gamma-\beta > 1$. In this case $\frac{1}{3+\beta} > \frac{1}{2(1+\gamma)}$. If $\frac{1}{q} \leq \frac{1}{2(1+\gamma)}$ we have $\nu^\sharp(\frac{q}{2}-1)=\frac{q}{2}-1$ and condition (c) yields the point $P_{4,\gamma}^{\mathrm{rad}}$. If
$\frac{1}{q}> \frac{1}{2(1+\gamma)}$, we have $\nu^{\sharp}(\frac{q}{2}-1) \leq (1-\frac{\beta}{\gamma})(\frac{q}{2}-1) +\beta$, and in particular (c) is satisfied if $
(1-\frac{\beta}{\gamma})(\frac{1}{2}-\frac{1}{q}) + \frac{\beta}{q} + \frac{1}{p}-\frac{1}{q} \leq \frac{1}{2}.$
A computation shows that this matches condition (b) at the point $P_{5,\beta,\gamma}^{\mathrm{rad}}$. Therefore, ${{\mathcal{Q}}}_{\beta,\gamma}^{\mathrm{rad}}\subset \overline{{{\mathcal{T}}}_E^{\mathrm{rad}}}$.

It remains to show sharpness, in the sense that the inclusion $\mathcal{Q}^{\mathrm{rad}}_{\beta,\gamma}\subset \overline{\mathcal{T}_E^{\mathrm{rad}}}$ may be an equality for some sets $E$. This happens when the upper bound in \eqref{eqn:fracprop} is an equality.
For example, if $E$ is the $(\beta,\gamma)$ Assouad-regular set constructed in \cite[\S 6.2]{RoosSeeger} and $\nu(\theta)=-\min((1-\theta)\gamma,\beta)$,
then
\begin{equation*}
\sup_{|J|=\delta^\theta} N(E\cap J,\delta)\approx \delta^{\nu(\theta)}
\end{equation*}
uniformly in $\delta,\theta$ (in particular, the Assouad spectrum and upper Assouad spectrum both equal $\gamma(\theta)=\min(\frac{\beta}{1-\theta},\gamma)$).
This implies \[\nu^\sharp(\alpha)=\max(\alpha, (1-\tfrac\beta\gamma)\alpha+\beta)\] as required.

{\em Remark.}
For $\gamma=1$ and $\beta\in (0,1)$, another example is given by the convex sequences $E=\{1+n^{1-\beta^{-1}}\,:\,n\ge 1\}$ which also have (upper) Assouad spectrum equal to $\gamma(\theta)=\min(\frac{\beta}{1-\theta},1)$.
Additional examples are provided by the Moran set constructions in \cite[\S 3]{Rutar24} (also see \cite{BR22}).
\end{proof}

\section{Necessary conditions}\label{sec:nec}

We start with the necessary condition $p \leq q$. We provide a direct proof, since for the class of radial functions one cannot directly appeal to the standard example of Hörmander \cite{hormander1960} for translation-invariant operators.

\begin{lemma}\label{lem: pq nec}
Let $1 \leq p, q < \infty$. Assume that $\| M_E \|_{L^p_{\mathrm{rad}} \to L^q} \lesssim 1$. Then $p \leq q$.
\end{lemma}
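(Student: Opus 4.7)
The standard Hörmander dilation/translation argument for translation-invariant operators cannot be used directly here, because translates of a radial function are generally not radial. Instead, I would exploit the fact that the dilation set $E$ lies in the bounded interval $[1,2]$, so $A_t$ has short range as $t\in E$, and test the inequality against the indicator of a large ball, which \emph{is} radial.

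The plan is as follows. Assuming without loss of generality that $E\neq\emptyset$ (else $M_E\equiv 0$ and the claim is vacuous), fix $R>3$ and set
\[ f_R(x) = \bbone_{B(0,R)}(x). \]
This is a radial function with $\|f_R\|_p \sim_d R^{d/p}$. For any $x\in B(0,R-2)$ and any $t\in[1,2]$, the sphere of radius $t$ centered at $x$ satisfies $|x+t\omega|\le |x|+t\le R$ for $\omega\in S^{d-1}$, hence lies in $B(0,R)$; consequently $A_t f_R(x)=1$. Therefore, for any fixed $t_0\in E\subseteq [1,2]$,
\[ M_E f_R(x) \ge A_{t_0} f_R(x) = 1 \qquad \text{for all } x\in B(0,R-2), \]
which yields $\|M_E f_R\|_q \gtrsim_d (R-2)^{d/q}$.

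The hypothesized bound $\|M_E\|_{L^p_{\mathrm{rad}}\to L^q}\lesssim 1$ then forces
\[ (R-2)^{d/q} \lesssim_d R^{d/p} \]
for all sufficiently large $R$. Letting $R\to\infty$ yields $d/q \le d/p$, i.e.\ $p\le q$, as desired. The only thing to check carefully is the innocuous point that we used both $p<\infty$ (so that $\|f_R\|_p<\infty$ and the norm has the claimed size) and $q<\infty$ (so that $\|M_E f_R\|_q$ has the claimed polynomial growth lower bound); the overall argument is short and I expect no genuine obstacle.
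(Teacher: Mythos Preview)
Your argument is correct. It is also genuinely simpler than the paper's proof. The paper constructs, for large $k$, a test function $f_k$ that is a normalized sum of roughly $2^k$ disjoint radial annuli of width $\sim 1$ located at radii $\sim 2^k$, so that $A_t$ (for $t\in[1,2]$) maps each annulus onto a slightly thinner concentric annulus and keeps the pieces disjoint; summing contributions yields $\|M_E f_k\|_q/\|f_k\|_p \gtrsim 2^{-kd(1/p-1/q)}$. This is a radial substitute for H\"ormander's ``many disjoint translates'' argument. You instead use a single indicator $\bbone_{B(0,R)}$ and the fact that $E\subset[1,2]$ gives bounded range, obtaining the same asymptotic $R^{d(1/q-1/p)}$ for the norm ratio with less effort. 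Both approaches rely on the boundedness of $E$; yours is more economical, while the paper's construction more transparently mirrors the classical translation-invariant proof and would adapt to situations where one wants test functions supported on thin sets rather than full balls.
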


\begin{proof}
Let $k >0$ be a large parameter. For $n >0$, let \[ I^{k,*}_n:=[2^k+8n+1, 2^k+ 8n+7], \qquad I^k_n:=[2^k+8n+3, 2^k+ 8n+5]\] and define
\begin{equation*}
f_k(x):= 2^{-k(d-1)/p} \sum_{n=1}^{2^{k-5} }{{\mathbbm 1}}_{I^{k,*}_n} (|x|).
\end{equation*}
Then $\|f_k\|_p{\lesssim} 2^{k/p}.$ Let $t\in E \subset [1,2]$ and observe that
\begin{align*}
A_{t} [ {{\mathbbm 1}}_{I^{k,*}_n}(|\cdot|)] (x) &=1, \quad \text{ for } |x|\in I_{n}^k \\
A_{t} [ {{\mathbbm 1}}_{I^{k,*}_n}(|\cdot|)] (x) &=0, \quad \text{ for } |x|\in I_{n'}^k, \quad n \neq n'.
\end{align*}
Therefore
\begin{align*}
\|A_t f_k\|_q&\ge \Big(\sum_{n=1}^{2^{k-5}} 2^{-k(d-1)q/p} \int_{\{x\,:\, |x| \in I_{n}^k\}} \Big|\sum_{n'=1}^{2^{k-5}} A_t [{{\mathbbm 1}}_{I_{n'}^{k,*}} (|\cdot|) ](x)\Big|^q {{\text{\,\rm d}}} x\Big)^{1/q} \\
& = \Big(\sum_{n=1}^{2^{k-5}} 2^{-k(d-1)q/p} \int_{I_{n}^k} r^{d-1} {{\text{\,\rm d}}} r\Big)^{1/q} \gtrsim 2^{-k(d-1) (1/p-1/q) } 2^{k/q}.
\end{align*}
Consequently,
\begin{equation}\label{eq:ratio pq}
\frac{\|M_E f_k\|_q}{\| f_k\|_p} \gtrsim 2^{-kd(1/p-1/q)}
\end{equation}
and since $\| M_E \|_{L^p_{\mathrm{rad}} \to L^q} \lesssim 1$ by assumption, we must have $p \leq q$ by letting $k \to \infty$ in \eqref{eq:ratio pq}.
\end{proof}

The next lemma is standard, and can be found, for instance, in \cite{AHRS}. We note that the first condition is the usual scaling condition from fixed-time averages and thus does not depend on the set of dilations $E$.

\begin{lemma} \label{lem: nec easy}
Let $1 \leq p \leq q < \infty$, $d \geq 2$.
\begin{enumerate}[(i)]
\item If $\| M_{[1,2]} \|_{L^p_{\mathrm{rad}} \to L^q} \lesssim 1$, then $q \leq pd$.
\item For any $E \subset [1,2]$
\begin{equation}\label{eq:nec easy}
\| M_E \|_{L^p_{\mathrm{rad}} \to L^q} \gtrsim \sup_{\delta<1 } N(E, \delta)^{1/q} \delta^{d-1+\frac{1}{q}-\frac{d}{p}}.
\end{equation}
\end{enumerate}
\end{lemma}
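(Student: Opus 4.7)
The plan is to prove both parts by testing $M_E$ against concrete radial functions that focus the averages. For (i) I will use a thin shell around the unit sphere, while for (ii) I will use a small ball at the origin combined with a choice of $\sim N(E,\delta)$ well-separated dilations.

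For (i), I take $f_\delta = \bbone_{\{y\,:\,1\le|y|\le 1+\delta\}}$, so that $\|f_\delta\|_p\sim\delta^{1/p}$. For $|x|\le\delta$, the unit sphere $S(x,1)$ is contained in the annulus $\{|y|\in[1-\delta,1+\delta]\}$, and the portion satisfying $|y|\ge 1$ has a positive fraction of spherical surface measure (e.g.\ by inspecting the hemisphere where $\omega\cdot x\ge 0$). This gives $A_1 f_\delta(x)\gtrsim 1$ on $B(0,\delta)$, so $M_{[1,2]} f_\delta\gtrsim \bbone_{B(0,\delta)}$, whence $\|M_{[1,2]}f_\delta\|_q\gtrsim \delta^{d/q}$. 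Inserting this into the assumed boundedness and letting $\delta\to 0^+$ forces $\delta^{d/q-1/p}\lesssim 1$ for all $\delta\in(0,1)$, i.e.\ $q\le dp$.

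For (ii), I would take $f_\delta=\bbone_{B(0,\delta)}$, so that $\|f_\delta\|_p\sim\delta^{d/p}$. The key geometric fact is a cap estimate: for $t\in[1,2]$ and $x$ with $\bigl|\,|x|-t\,\bigr|\le c\delta$ (with $c$ a small absolute constant), one has $A_tf_\delta(x)\gtrsim \delta^{d-1}$. I would derive this by parametrizing $y=x+t\omega$ with $\omega\in S^{d-1}$ and writing $\omega=-\cos\theta\,\hat x+\sin\theta\,\hat v$ where $\hat x=x/|x|$, $\hat v\in S^{d-2}$, so that
\[|y|^2=(|x|-t)^2+|x|t\,\theta^2+O(\theta^4).\]
The condition $|y|\le\delta$ then holds whenever $\theta^2\le \bigl(\delta^2-(|x|-t)^2\bigr)/(|x|t)$, which for $|x|,t\sim 1$ and $c<1$ permits all $\theta$ in an interval of length $\sim\delta$, yielding a spherical cap of normalized $(d-1)$-dimensional measure $\sim\delta^{d-1}$.

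To bring in the factor $N(E,\delta)^{1/q}$, I would greedily pick $\{t_1,\dots,t_N\}\subset E$ with $N\gtrsim N(E,\delta)$ that are pairwise $C\delta$-separated for some $C>2c$; such a selection exists since $N(E,C\delta)\sim_C N(E,\delta)$. The shells $R_j=\bigl\{x\,:\,\bigl|\,|x|-t_j\,\bigr|\le c\delta\bigr\}$ are then pairwise disjoint, each of volume $\sim\delta$ (as $t_j\sim 1$), and $M_E f_\delta\ge A_{t_j}f_\delta\gtrsim \delta^{d-1}$ on $R_j$. Summing,
\[\|M_E f_\delta\|_q^q \gtrsim \sum_{j=1}^N \delta\cdot\delta^{(d-1)q}\sim N(E,\delta)\,\delta^{(d-1)q+1},\]
which combined with $\|f_\delta\|_p\sim\delta^{d/p}$ gives $\|M_E\|_{L^p_{\mathrm{rad}}\to L^q}\gtrsim N(E,\delta)^{1/q}\delta^{d-1+1/q-d/p}$; taking the supremum over $\delta\in(0,1)$ completes (ii). The only quantitative step is the spherical cap estimate, and I expect no essential obstacle beyond the careful Taylor expansion at the point of $S(x,t)$ closest to the origin.
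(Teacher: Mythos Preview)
Your argument is correct. The paper does not give its own proof of this lemma, stating only that it ``is standard, and can be found, for instance, in \cite{AHRS}'', so there is no detailed comparison to make. Your choice of test functions---a thin annulus around the unit sphere for (i) and a small ball at the origin for (ii), combined with a maximal $C\delta$-separated subset of $E$---is exactly the standard one, and the cap estimate $A_t\bbone_{B(0,\delta)}(x)\gtrsim\delta^{d-1}$ for $\bigl||x|-t\bigr|\le c\delta$ is handled cleanly via the expansion $|y|^2=(|x|-t)^2+2|x|t(1-\cos\theta)$. One small remark: in your greedy selection step, it is worth stating explicitly that a \emph{maximal} $C\delta$-separated subset of $E$ has cardinality at least $N(E,2C\delta)\gtrsim_C N(E,\delta)$, since the intervals of length $2C\delta$ centered at its points necessarily cover $E$; you allude to this but the logical direction deserves one sentence.
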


The previous conditions imply that $L^p_{\mathrm{rad}} \to L^q$ bounds for $M_E$ cannot hold for $(\frac{1}{p}, \frac{1}{q}) \not \in \Delta_\beta$. This was essentially implicit in previous works such as \cite{AHRS}.

\begin{cor}\label{cor: Delta beta nec}
Let $d \geq 2$, $E \subset [1,2]$ with $\dim_M E = \beta$. Then ${{\mathcal{T}}}_E^{\mathrm{rad}} \subset \Delta_\beta$.
\end{cor}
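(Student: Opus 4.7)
The plan is to recognize $\Delta_\beta$ as the intersection (inside $[0,1]^2$) of three closed half-planes, each forced by one of the necessary conditions established above. A check at the three vertices shows that $\Delta_\beta$ equals the set of $(1/p,1/q)\in[0,1]^2$ satisfying
(a) $\tfrac{1}{q}\le\tfrac{1}{p}$,
(b) $\tfrac{1}{p}\le\tfrac{d}{q}$ (equivalently $q\le pd$), and
(c) $\tfrac{d}{p}+\tfrac{\beta-1}{q}\le d-1$:
indeed $[P_1,P_{2,\beta}]$ lies on the line $p=q$, $[P_1,P_{3,\beta}^{\mathrm{rad}}]$ lies on $q=pd$, and a direct computation shows that both $P_{2,\beta}$ and $P_{3,\beta}^{\mathrm{rad}}$ satisfy $d/p+(\beta-1)/q=d-1$.

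I will then extract each inequality from the hypothesis that $M_E:L^p_{\mathrm{rad}}\to L^q$ is bounded. Inequality (a) is precisely Lemma~\ref{lem: pq nec}. For (c), Lemma~\ref{lem: nec easy}(ii) gives $N(E,\delta)\lesssim \delta^{-(q(d-1)+1-qd/p)}$ uniformly in $\delta\in(0,1)$; since $\dim_{\mathrm{M}} E=\beta$, for every $\varepsilon>0$ there is a sequence $\delta_n\to 0$ with $N(E,\delta_n)\ge \delta_n^{-\beta+\varepsilon}$, so $\beta-\varepsilon\le q(d-1)+1-qd/p$, and letting $\varepsilon\to 0$ yields (c).

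Inequality (b) is where a small obstacle arises, since Lemma~\ref{lem: nec easy}(i) is stated for the full dilation set $[1,2]$ rather than for general $E$; I therefore plan a direct scaling argument. Fix any $t_0\in E$ (the case $E=\emptyset$ is trivial). Since $M_E\ge |A_{t_0}|$ pointwise, $A_{t_0}:L^p_{\mathrm{rad}}\to L^q$ is bounded. Test against the thin radial shell $f_\delta(x)=\mathbf{1}_{[t_0,t_0+\delta]}(|x|)$, for which $\|f_\delta\|_p\sim \delta^{1/p}$. A short calculation shows that for $|x|\le c\delta$ a positive proportion of $\omega\in S^{d-1}$ satisfies $|x+t_0\omega|\in[t_0,t_0+\delta]$, so $A_{t_0}f_\delta(x)\gtrsim 1$ on the ball $B(0,c\delta)$ and hence $\|A_{t_0}f_\delta\|_q\gtrsim \delta^{d/q}$; boundedness then forces $d/q\ge 1/p$, i.e.\ (b). This scaling step is the only nontrivial element of the proof; all remaining steps are immediate from the preceding lemmas.
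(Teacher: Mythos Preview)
Your proof is correct and follows the same approach as the paper's own proof, which likewise identifies $\Delta_\beta$ as the intersection of the three half-planes and appeals to Lemma~\ref{lem: pq nec} and Lemma~\ref{lem: nec easy}. The one place you are more careful is condition (b): the paper simply cites Lemma~\ref{lem: nec easy}(i), relying on the preceding remark that this scaling condition ``does not depend on the set of dilations $E$'' since it comes from a single fixed-time average, whereas you spell out the thin-shell test against $A_{t_0}$ explicitly. Both are valid; your version is self-contained, the paper's is terser.
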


\begin{proof}
The triangle $\Delta_\beta$ is determined by the intersection of the lines $\frac{1}{p}=\frac{1}{q}$, $\frac{1}{q}=\frac{1}{pd}$ and $d-1 + \frac{1-\beta}{q}=\frac{d}{p}$. Using the definition of Minkowski dimension, these correspond to the necessary conditions in Lemma \ref{lem: pq nec} and Lemma \ref{lem: nec easy}, respectively.
\end{proof}

We note that when $p=\frac{d}{d-1}$ there is a more refined necessary condition than \eqref{eq:nec easy}.

\begin{lemma}\label{lem:lowerbd-pd}
Let $p_d= \frac{d}{d-1}$. Then
\[ \| M_E\|_{L^{p_d} _{{\text{\rm rad}}}\to L^q} {\gtrsim} \sup_{\delta<1} N(E,\delta)^{1/q} \delta^{1/q} [\log (1/\delta)]^{1/d}. \]
\end{lemma}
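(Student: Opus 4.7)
The approach is to refine the elementary necessary condition from Lemma \ref{lem: nec easy} by replacing the characteristic function of a thin annulus with a scale-invariant radial profile, namely a truncated version of Stein's $L^{p_d}$ counterexample with the usual logarithmic damping removed:
\[
f(y) := |y|^{-(d-1)}\bbone_{[\delta,1/2]}(|y|).
\]
The omission of the logarithmic factor is precisely what produces the gain $(\log(1/\delta))^{1/d}$ at the critical exponent $p_d$.

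Fix $\delta\in(0,1/2)$ and extract a maximal $10\delta$-separated subset $\{t_j\}_{j=1}^N\subset E$, so that $N\sim N(E,\delta)$. Using polar coordinates and the identity $(d-1)(p_d-1)=1$,
\[
\|f\|_{p_d}^{p_d} \sim \int_\delta^{1/2}r^{-(d-1)(p_d-1)}\,dr = \int_\delta^{1/2}\tfrac{dr}{r}\sim \log\tfrac{1}{\delta},
\]
and therefore $\|f\|_{p_d}\sim (\log(1/\delta))^{(d-1)/d}$.

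The main step is the pointwise lower bound $A_{t_j}f(x)\gtrsim \log(1/\delta)$ for every $x$ in the radial shell $S_j:=\{x\in\R^d: ||x|-t_j|\le\delta\}$. By the standard representation of the spherical mean of a radial function, with $s:=|x|$,
\[
A_{t_j}f(x) \sim \int_{|s-t_j|}^{s+t_j} u\, f_0(u)\bigl((u^2-(s-t_j)^2)((s+t_j)^2-u^2)\bigr)^{(d-3)/2}\,du,
\]
where $f_0(u)=u^{-(d-1)}\bbone_{[\delta,1/2]}(u)$, the implicit constants depend only on $d$ (using $s,t_j\sim 1$), and for $d=2$ the formula is interpreted as an improper integral with the reciprocal square-root Jacobian. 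For $|s-t_j|\le\delta$, the bulk range $u\in[2\delta, 1/2]$ satisfies $u^2-(s-t_j)^2\ge\tfrac{3}{4}u^2$ and $(s+t_j)^2-u^2\sim 1$, so
\[
A_{t_j}f(x)\gtrsim \int_{2\delta}^{1/2}u^{-(d-1)+(d-3)+1}\,du = \int_{2\delta}^{1/2}\tfrac{du}{u}\sim \log\tfrac{1}{\delta}.
\]
In $d=2$ the reciprocal square root in the Jacobian only strengthens this estimate, and the remaining contribution from $u\in[\max(\delta,|s-t_j|),2\delta]$ is nonnegative.

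Since the $t_j$ are $10\delta$-separated, the shells $S_j$ are pairwise disjoint and each has Lebesgue measure $\sim\delta$. Hence
\[
\|M_Ef\|_q^q \ge \sum_{j=1}^N \int_{S_j}|A_{t_j}f(x)|^q\,dx \gtrsim N\delta(\log(1/\delta))^q,
\]
and dividing by $\|f\|_{p_d}$ yields
\[
\frac{\|M_Ef\|_q}{\|f\|_{p_d}}\gtrsim N(E,\delta)^{1/q}\delta^{1/q}(\log(1/\delta))^{1-(d-1)/d} = N(E,\delta)^{1/q}\delta^{1/q}(\log(1/\delta))^{1/d}.
\]
Taking the supremum over $\delta\in(0,1/2)$ finishes the proof. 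The only real obstacle is the uniform kernel estimate in the main step; the potential Jacobian singularity at $u=|s-t_j|$ in $d=2$ is benign and only improves the lower bound.
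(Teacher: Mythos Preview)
Your proof is correct and follows the same underlying idea as the paper---test with a truncated critical-exponent radial profile and read off the logarithmic gain---but the implementation differs in two useful ways. First, the paper uses the normalized test function
\[
g(x)=|x|^{1-d}\big(\log\tfrac{1}{|x|}\big)^{(1-d)/d}\bbone_{[\delta^{1/2},\delta^{1/4}]}(|x|),
\]
so that $\|g\|_{p_d}\sim 1$ and $M_Eg(x)\gtrsim(\log(1/\delta))^{1/d}$ on the set $\{|x|:\dist(|x|,E)\le\delta\}$; you instead take $f(y)=|y|^{-(d-1)}\bbone_{[\delta,1/2]}(|y|)$ with $\|f\|_{p_d}\sim(\log(1/\delta))^{(d-1)/d}$ and $A_{t_j}f\gtrsim\log(1/\delta)$ on disjoint $\delta$-shells around a $10\delta$-separated subset $\{t_j\}$. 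Both routes yield the same quotient. Second, the paper must first invoke Stein's classical example to show that boundedness of $M_E$ forces $|\overline E|=0$, a fact needed to pass from the $D_n$ decomposition to $|W_\delta|\gtrsim N(E,\delta)\delta$; your discrete-shell argument sidesteps this preliminary step entirely, which makes the proof slightly more self-contained. The kernel lower bound you give via the Leckband formula \eqref{eq:Kt-polar} is correct in all dimensions, and your observation that the $d=2$ reciprocal square root only helps is accurate.
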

\begin{proof} First consider $f(x)=|x|^{1-d} \log (|x|^{-1})^{-1} {{\mathbbm 1}}_{[0,\frac 12]}(|x|)$, the familiar example by Stein \cite{SteinPNAS1976}. Then $f\in L^{p_d}$ and $M_E f(x)=\infty$ for $|x|\in\overline E$. Hence if $\|M_E\|_{L^{p_d}_{{\text{\rm rad}}} \to L^q} <\infty$ then $\overline E$ must be of measure zero.

Let $\delta\le 10^{-2}$ and define
\[ g(x):=|x|^{1-d} \big( \log (\tfrac{1}{|x|} )\big)^{\frac{1-d}{d}} {{\mathbbm 1}}_{[\delta^{1/2}, \delta^{1/4} ]}(|x|).\]
Then
\begin{align*}
\|g\|_{p_d}
&\sim \Big( \int_{\delta^{1/2}}^{\delta^{1/4} } s^{-1} (\log \tfrac 1s)^{-1} \, {{\text{\,\rm d}}} s\Big)^{1/p_d}
\sim \Big( \int ^{ \tfrac 12 \log(\frac 1\delta) } _{ \tfrac 14\log(\frac 1\delta) }
u^{-1} {{\text{\,\rm d}}} u\Big)^{1/p_d} \sim 1
\end{align*}
uniformly in $\delta$.

Next, let $D_n=\{r: 2^{-n} \le {{\text{\rm dist}}} (r,E) \le 2^{1-n} \}$.
As stated in \cite[Lemma 2.7]{SeegerWaingerWright1997}, we have that for nonnegative $f$ and $f(x)=f_0(|x|)$,
\[ M_E f(x)\ge c \int_{2^{-n+2}}^1 s^{d-2} f_0(s)\, {{\text{\,\rm d}}} s \quad \text{ for $|x|\in D_n$;} \] indeed,
this is a consequence of formula \eqref{eq:Kt-polar} below.
Thus, for $|x|\in D_n$ and $2^{-n} \le \delta$ we get
\begin{align*} M_E g(x)&
{\gtrsim} \int_{\delta^{1/2}}^{\delta^{1/4}} s^{-1} \log(\tfrac 1s)^{\frac{1-d}{d}} {{\text{\,\rm d}}} s
= \int_{\tfrac 14 \log (\frac 1\delta)} ^{ \tfrac 12 \log( \frac{1}{\delta})} u^{1/d} u^{-1} {{\text{\,\rm d}}} u {\gtrsim} [\log (\tfrac 1\delta)] ^{1/d}.
\end{align*} Therefore, letting $W_\delta:=\{r: {{\text{\rm dist}}} (r,E) \le \delta \}$,
\begin{align*}\|M_E f\|_q {\gtrsim} \Big(\sum_{n:2^{-n}\le \delta} |D_n|\, [\log (\tfrac 1\delta)] ^{q/d}\Big)^{1/q} {\gtrsim} |W_\delta|^{1/q} [\log (\tfrac 1\delta)] ^{1/d};
\end{align*}
here we used that $|\overline E|=0$.
Noting that $|W_\delta|\ge \frac 13 N(E,\delta)\delta$, the asserted lower bound follows.
\end{proof}
We continue with a further necessary condition involving the Assouad spectrum. To the best of our knowledge, this condition has not shown up before in the literature. It turns out to be relevant only for $d=2$.

\begin{lemma}\label{lem: nec hard}
Let $1 \leq p \leq q < \infty$. Given $0 < \delta < 1$ and an interval $J$ of length $|J|> \delta$, define
\begin{align*}
{{\mathfrak{C}}}_E(\delta, J) &:= N(E \cap J, \delta)^{\frac 1q} |J|^{-(d-1)(\frac{1}{2}-\frac{1}{q})} \delta^{\frac{d-1}{2} + \frac{1}{q} - \frac{1}{p}}.
\end{align*}
Then
\begin{equation}\label{eq:nechard}
\| M_E \|_{L^p_{\mathrm{rad}} \to L^q} \gtrsim \sup_{\delta<1}\sup_{|J|>\delta} {{\mathfrak{C}}}_E(\delta,J)\,.
\end{equation}
In particular, if $q\ge 2$ and $\nu^\sharp(\alpha)$ is as in \eqref{eq:nudagger} the $L^p_{{\text{\rm rad}}}\to L^q$ boundedness of $M_E$ implies
\[\tfrac{1}{q}\nu^\sharp\big((d-1)(\tfrac q2-1) \big) + \tfrac{1}{p} - \tfrac{1}{q} \leq \tfrac{d-1}{2}.\]
\end{lemma}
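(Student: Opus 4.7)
The plan is to establish \eqref{eq:nechard} by constructing, for every pair $(\delta,J)$, a radial test function yielding the lower bound $\fC_E(\delta,J)$, and then to derive the $\nu^\sharp$ inequality as a direct corollary. Fix $\delta \in (0,1)$ and an interval $J\subset[1,2]$ with $|J|>\delta$; let $\{t_1<\cdots<t_N\}\subset E\cap J$ be a maximal $\delta$-separated subset, so $N=N(E\cap J,\delta)$. I will take $t_\star = t_1-\delta$ and $f=\bbone_A$ with $A = \{y\in\R^d : t_\star \leq |y| \leq t_\star + \delta\}$, so that $\|f\|_p \sim \delta^{1/p}$ (since $t_\star \sim 1$).

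The main technical step is the following geometric estimate. For each $j$ set $\rho_j = t_j - t_\star \in [\delta, |J|+\delta]$; the sphere $\{|y-x|=t_j\}$ with $|x|=\rho_j$ is internally tangent to $\{|y|=t_\star\}$. Parametrizing by $\gamma=\cos\theta$ (angle of $y-x$ with $x/|x|$), the formula
\[ A_{t_j}f(x) = c_d \int_{\gamma_1(x)}^{\gamma_2(x)} (1-\gamma^2)^{(d-3)/2}\, d\gamma \]
has $\gamma_1,\gamma_2 = -1 + O(\delta/\rho_j)$; the substitution $u=1+\gamma$ reduces the integral to $\int_0^{\delta/\rho_j} u^{(d-3)/2}\,du \sim (\delta/\rho_j)^{(d-1)/2}$. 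This bound should persist on the shell $S_j = \{x : \rho_j \leq |x| \leq \rho_j+c\delta\}$ for a sufficiently small $c>0$, and the $\rho_j$ being $\delta$-separated makes the $S_j$ pairwise disjoint (each of Lebesgue measure $\sim\rho_j^{d-1}\delta$).

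Summing over $j$ yields
\[ \|M_E f\|_q^q \gtrsim \delta^{1+q(d-1)/2} \sum_{j=1}^N \rho_j^{(d-1)(1-q/2)}. \]
When $q\geq 2$, the exponent $(d-1)(1-q/2)$ is nonpositive and $\rho_j \leq |J|+\delta \lesssim |J|$ gives the lower bound $\sum_j \rho_j^{(d-1)(1-q/2)} \geq N\, |J|^{(d-1)(1-q/2)}$. Dividing by $\|f\|_p^q$ produces exactly $\fC_E(\delta,J)^q$, proving \eqref{eq:nechard}. For $1\leq q<2$, the simpler estimate $\rho_j \geq \delta$ suffices and the $|J|$-factor in $\fC_E$ is $\leq 1$, so the case $|J|\sim 1$ already handles the statement.

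For the ``in particular'' consequence I select $\alpha = (d-1)(q/2-1)\geq 0$, which is precisely the value making $\alpha/q = (d-1)(1/2-1/q)$ so that the $|J|$-powers in $\fC_E$ cancel. Given $\varepsilon>0$, the definition of $\nu^\sharp(\alpha)$ produces arbitrarily small $\delta$ and intervals $J$ with $N(E\cap J,\delta) \gtrsim |J|^\alpha \delta^{-\nu^\sharp(\alpha)+\varepsilon}$, whence \eqref{eq:nechard} yields
\[ \|M_E\|_{L^p_\rad\to L^q} \gtrsim \delta^{-\nu^\sharp(\alpha)/q + (d-1)/2 + 1/q - 1/p + \varepsilon/q}. \]
Finiteness of the operator norm forces the exponent on $\delta$ to be $\geq 0$ as $\delta,\varepsilon\to 0$, giving $\tfrac{1}{q}\nu^\sharp\bigl((d-1)(\tfrac{q}{2}-1)\bigr) + \tfrac{1}{p} - \tfrac{1}{q} \leq \tfrac{d-1}{2}$. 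The principal obstacle is the uniform control of the pushforward integral near $\gamma=-1$ across the range $\rho_j\in[\delta,|J|]$; everything else is bookkeeping.
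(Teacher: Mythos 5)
Your construction for $q\ge 2$ is essentially the paper's: they take $g_\delta=\bbone_{[t_L-\delta,t_L+\delta]}(|\cdot|)$ with $t_L$ the left endpoint of $J$, show $|A_{t}g_\delta(x)|\gtrsim(\delta/|x|)^{(d-1)/2}$ for $|x|\in I_t=[t-t_L-\delta/10,\,t-t_L+\delta/10]$ by bounding the spherical ``cap'' directly, sum over $t$ using the disjointness of the $I_t$, and then exploit $|x|\lesssim|J|$ together with the nonpositivity of the exponent $(d-1)(1-q/2)$. Your $\gamma=\cos\theta$ pushforward is a cosmetic variant of their direct estimate $|x-t y|\in[t_L-\delta/10,\,t_L+\delta]$ on a Knapp cap $R_x$; the paper does carry out the tangency/persistence-on-a-shell verification in full (your flagged ``principal obstacle''). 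Your derivation of the $\nu^\sharp$ inequality from \eqref{eq:nechard} is also the paper's: with $\alpha=(d-1)(q/2-1)$ the $|J|$-powers cancel exactly, the $\limsup$ in \eqref{eq:nudagger} produces a sequence $\delta_m\to0$ with $N(E\cap J,\delta_m)\gtrsim|J|^\alpha\delta_m^{\varepsilon-\nu^\sharp(\alpha)}$, and finiteness of the operator norm forces the exponent on $\delta$ to be nonnegative.

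There is, however, a genuine gap in your treatment of $1\le q<2$. You claim ``the simpler estimate $\rho_j\ge\delta$ suffices,'' but this gives
\[
\|M_E\|_{L^p_\rad\to L^q}\;\gtrsim\;N(E,\delta)^{1/q}\,\delta^{\,d/q-1/p},
\]
whereas the target (with the $|J|$-factor dropped as you note) is $N(E,\delta)^{1/q}\,\delta^{(d-1)/2+1/q-1/p}$. Since $d/q-1/p\ \ge\ (d-1)/2+1/q-1/p$ if and only if $q\le 2$, and the inequality is strict for $q<2$, your $\delta$-exponent is strictly larger and the bound is strictly weaker (recall $\delta<1$). So the geometric construction does not produce $\fC_E(\delta,J)$ in this range. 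The paper instead observes that for $q\le 2$ the exponent of $|J|$ is positive so $\sup_{|J|\ge\delta}\fC_E(\delta,J)=N(E,\delta)^{1/q}\delta^{(d-1)/2+1/q-1/p}$, and then uses $p\le 2$ to dominate this by $N(E,\delta)^{1/q}\delta^{d-1+1/q-d/p}$, which is already covered by Lemma~\ref{lem: nec easy}(ii). You should invoke that lemma rather than re-use the cap example. This does not affect the ``in particular'' conclusion, which is stated for $q\ge2$, but the inequality \eqref{eq:nechard} is asserted for all $1\le p\le q<\infty$.
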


\begin{proof}
Let $1 \leq p \leq q \leq 2$. Then the exponent of the factor $|J|^{-(d-1)(\frac{1}{2}-\frac{1}{q})}$ is positive and thus, for any $0 < \delta < 1$,
\begin{equation*}
\sup_{|J| \geq \delta} {{\mathfrak{C}}}_E(\delta, J)= N(E,\delta)^{\frac 1q}\delta^{\frac{d-1}{2}+\frac{1}{q}-\frac{1}{p}} \lesssim N(E,\delta)^{\frac 1q} \delta^{d-1+\frac{1}{q}-\frac{d}{p}}
\end{equation*}
where the inequality follows since $p \leq 2$. Thus, the desired lower bound follows from Lemma \ref{lem: nec easy}, (ii).

We next assume $2 \leq q < \infty$. Let $0< \delta <1$ and $J=[t_L,t_R] \subset [1/2,5/2]$ be an interval with $|J|\geq \delta$. Let $g_\delta(x):={{\mathbbm 1}}_{[t_L-\delta, t_L+\delta]}(|x|)$. Clearly, $\| g_\delta \|_p \lesssim \delta^{1/p}$. For each $t \in E \cap J$ with $|t - t_L|\geq \delta$, let $I_t:=[t-t_L-\delta/10, t-t_L + \delta/10]$. For any $x \in {\mathbb R}^d$ such that $|x| \in I_t$, let $t(x)=t \in E$.
We claim that
\begin{equation}\label{eq:claim}
|A_{t(x)} g_\delta (x)| \gtrsim \Big(\frac{\delta}{|x|} \Big)^{\frac{d-1}{2}}.
\end{equation}
Thus, if $D: = \bigcup_{t \in E \cap J, |t-t_L|\ge \delta} I_t$, we have $M_E g_\delta (x) \gtrsim (\delta/|x|)^{\frac{d-1}{2}}$ whenever $|x| \in D$. Consequently, noting that $r \leq 2|J|$ for $r \in D$, we have for $q \geq 2$
\begin{align*}
\| M_E g_\delta \|_q & \gtrsim \delta^{\frac{d-1}{2}} \Big( \int_D r^{-\frac{(d-1)q}{2} + d-1 } \mathrm{d}r \Big)^{1/q} \\
& \gtrsim \delta^{\frac{d-1}{2}} |J|^{-(d-1)(\frac{1}{2} - \frac{1}{q})} |D|^{1/q}.
\end{align*}
Using that $|D| \gtrsim N(E \cap J, \delta)^{1/q}\delta^{1/q}$ we have
\begin{equation*}
\frac{\| M_E g_\delta \|_q}{\| g_\delta\|_p} \gtrsim N(E \cap J, \delta)^{1/q} |J|^{-(d-1)(\frac{1}{2}-\frac{1}{q})} \delta^{\frac{(d-1)}{2}+\frac{1}{q} - \frac{1}{p}}= {{\mathfrak{C}}}_E(\delta),
\end{equation*}
as desired.

We now prove the claim \eqref{eq:claim}. Note that, by rotation invariance,
we can assume $x=(x_1,0)$ with $x_1\ge 0$ and $x_1-t(x) =-t_L+c_1\delta $, where $|c_1|\le 1/10$.
Let
\begin{equation*}
R_x:=\{ (y_1,y') \in S^{d-1} : y_1 \geq 0, |y'| \leq c_2^{-1} (\delta/x_1)^{1/2} \}
\end{equation*}
with $c_2^2 > \frac{20}{9}$. Note that if $y \in R_x$ we have $(1-y_1) \leq |y'|^2 \leq \frac{1}{c_2^2}\frac{\delta}{|x_1|}$. Now, a computation shows that for $y \in R_x$,
\begin{equation}\label{eq:support for gdelta}
t_L - \tfrac{\delta}{10} \leq |x-t(x)y| \leq t_L+\delta.
\end{equation}
Indeed, on the one hand, we have
\begin{align*}
|x-t(x)y|^2
&= x_1^2 + (t(x))^2 -2 x_1 t(x) y_1 \\
& = (x_1-t(x))^2 + 2t(x)x_1 (1-y_1) \\
& = (t_L-c_1\delta )^2 + 2 t(x) x_1 (1-y_1) \\
& \leq t_L^2 + c_1^2 \delta^2 -2 c_1 t_L \delta + 2 \frac{t(x)}{t_Lc_2^2} t_L\delta \\
& \leq t_L^2+\delta^2+2t_L\delta -(1-c_1^2) \delta^2-2(1+c_1)t_L\delta +4 c_2^{-2} t_L\delta \\&\le (t_L +\delta)^2.
\end{align*}
In the last line we used that $-2(1+c_1)+4c_2^{-2} <2(-\frac{9}{10} +2c_2^{-2} )<0$, and in the previous one, $t \leq 2t_L$.
On the other hand, since $2t(x)x_1(1-y_1)\ge 0$ we have
\[|x-t(x) y|^2= (t_L-c_1\delta)^2+2t(x)x_1(1-y_1)\ge (t_L-\tfrac 1{10}\delta)^2.\]
In view of \eqref{eq:support for gdelta} and the definition of $g_\delta$, we immediately have
\[A_{t(x)} g_\delta(x)\ge \int_{R_x} g_\delta(x-t(x) y) {{\text{\,\rm d}}}\sigma(y) {\gtrsim} (\sqrt{\delta/|x_1|} )^{d-1}, \]
which verifies the claim \eqref{eq:claim}.

Finally, note that by the definition of $\nu^\sharp(\alpha)$ in \eqref{eq:nudagger}, given $\varepsilon>0$, there exists a sequence $(\delta_m)$ with $\lim_{m\to\infty}\delta_m=0$,
such that \[\sup_{|J| \geq \delta_m} |J|^{-\alpha} N(E\cap J,\delta_m) \geq \delta_m^{\varepsilon-\nu^\sharp (\alpha)}.\]

Using this in \eqref{eq:nechard} with $\alpha= (d-1)(\frac q2-1)$ we see that there exist an interval $J$
such that
\[|J|^{((d-1)\frac q 2-1)\frac 1q} \delta_m^{\varepsilon-\nu^\sharp ( (d-1) \frac q2-1)/q} |J|^{-(d-1)(\frac 12-\frac 1q) } \delta_m^{\frac{d-1}{2}+\frac 1q-\frac 1p} {\lesssim} 1 \] for $\delta_m\le |J|$. Letting $m\to \infty$, this implies
$\tfrac{1}{q}\nu^\sharp((d-1)(\tfrac q2-1)) + \tfrac{1}{p} - \tfrac{1}{q} \leq \tfrac{d-1}{2}$ as desired. \end{proof}

\begin{remarka}
The scaling condition $q \leq pd$ in Lemma \ref{lem: nec easy}, (i), is implicit in the previous lemma by simply taking $|J|=\delta$ in the definition of ${{\mathfrak{C}}}_E(\delta, J)$.
\end{remarka}

\section{Estimates for \texorpdfstring{$d\ge 3$}{higher dimensions}}\label{sec:higher}
As in \cite{Leckband, SeegerWaingerWright1997} it is useful to rewrite the spherical averages, when acting on radial functions, as integral transforms on ${{\mathbb{R}}}^+$.
As shown in \cite{Leckband} one has for $f(x)=f_0(|x|)$

\Be
\label{eq:polar}
A_t f(x)= c_d \int_{||x|-t|}^{|x|+t} K_t(|x|,s) f_0(s)\, {{\text{\,\rm d}}} s
\Ee
with \Be\label{eq:Kt-polar}
K_t(r,s)= \Big( \frac{\sqrt{(r+t)^2-s^2} \sqrt{s^2-(r-t)^2} }{(r+t)^2-(r-t)^2} \Big)^{d-3} \frac{s}{(r+t)^2-(r-t)^2}\,.
\Ee

In this section we consider the higher dimensional case and prove Theorem \ref{thm:highdim}.
As stated in \cite{SeegerWaingerWright1997} the formulas
\eqref{eq:polar} and \eqref{eq:Kt-polar} imply expressions for the maximal functions which are easy to handle in dimension $d\ge 3$.

\begin{lemma}[{\cite[Lemma 3.1]{SeegerWaingerWright1997}}]\label{lem:decomposition d3}
Let $d\ge 3$, $1\le p<\infty$ and set $g(s):=f_0(s)s^{(d-1)/p}$.
Then for $r=|x|$,
\begin{equation*}
M_E f(x)\lesssim {{\mathfrak{M}}}_p g(r) +R_1f_0(r) + R_2 f_0(r),
\end{equation*}
where $r=|x|$ and
\begin{align}
{{\mathfrak{M}}}_p g(r)&:=\sup_{\substack{ t\in E\\r/2<t<3r/2}}
r^{1-d}\Big|\int_{|r-t|}^{r+t}s^{\frac{d-1}{p'}-1} g(s) {{\text{\,\rm d}}} s\Big|,
\label{eq:Mp def}\\
R_1 f_0(r)\,&:=\,\sup_{\substack{t\in[1,2]\\ t\le r/2}}\frac 1t \Big|\int_{r-t}^{r+t}f_0(s) {{\text{\,\rm d}}} s\Big|,
\notag \\
R_2 f_0(r)\,&:=\,\sup_{\substack{t\in [1,2]\\t\ge 3r/2}}\frac 1r\Big|\int_{t-r}^{t+r}f_0(s) {{\text{\,\rm d}}} s\Big|. \notag
\end{align}
\end{lemma}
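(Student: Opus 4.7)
The plan is to insert the explicit formulas \eqref{eq:polar} and \eqref{eq:Kt-polar} into the definition of $M_Ef$ and then split the supremum over $t$ into the three geometric regimes corresponding to the three terms: $t$ comparable to $r$ (which will produce $\mathfrak{M}_p g$), $t$ much smaller than $r$ (which will produce $R_1f_0$), and $t$ much larger than $r$ (which will produce $R_2f_0$). In each regime, I will derive a pointwise bound on the kernel $K_t(r,s)$ by estimating the numerator $\sqrt{(r+t)^2-s^2}\sqrt{s^2-(r-t)^2}$ against the denominator $4rt$. This is the main technical step. The constraint $d\ge 3$ enters because the exponent $d-3$ is nonnegative, so when the aforementioned ratio is bounded by a constant it can be dropped, and when it is bounded by $s/r$ it can be turned into a useful factor.

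More concretely, for $s\in[|r-t|,r+t]$, I would factor $(r+t)^2-s^2=(r+t-s)(r+t+s)$ and $s^2-(r-t)^2=(s-|r-t|)(s+|r-t|)$. In the \emph{middle} regime $r/2<t<3r/2$, we have $s\le r+t\lesssim r$, and the bounds $\sqrt{(r+t)^2-s^2}\le r+t\lesssim r$ and $\sqrt{s^2-(r-t)^2}\le s$ yield
\begin{equation*}
K_t(r,s)\lesssim \Bigl(\tfrac{s}{r}\Bigr)^{d-3}\tfrac{s}{r^2}=\tfrac{s^{d-2}}{r^{d-1}}.
\end{equation*}
Substituting $f_0(s)=s^{-(d-1)/p}g(s)$, the exponent collapses to $s^{(d-1)/p'-1}$, producing $\mathfrak{M}_p g(r)$.

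In the \emph{small-$t$} regime $t\le r/2$ (so $s\sim r$), the same factorisations give $\sqrt{(r+t)^2-s^2}\lesssim\sqrt{rt}$ and $\sqrt{s^2-(r-t)^2}\lesssim\sqrt{rt}$, so the ratio in the $(d-3)$-power is $\lesssim 1$ and $K_t(r,s)\lesssim s/(rt)\lesssim 1/t$. Integrating over $s\in[r-t,r+t]$ yields $R_1f_0(r)$. Symmetrically, in the \emph{large-$t$} regime $t\ge 3r/2$ (so $s\sim t$), the same kind of estimates give $K_t(r,s)\lesssim s/(rt)\lesssim 1/r$, producing $R_2f_0(r)$.

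The main obstacle is carrying out these pointwise kernel bounds cleanly, in particular handling the $(d-3)$-th power without introducing artificial singularities at the endpoints $s=|r-t|$ and $s=r+t$ where the square roots vanish; this is taken care of by the elementary inequalities $\sqrt{(r+t-s)(r+t+s)}\le\min(r+t,\sqrt{\max(r,t)\min(r,t)})$ and its counterpart for the other factor, which together with $d-3\ge 0$ force the kernel to be integrable and give the three desired estimates. Summing the three regimes and taking suprema over $t\in E$ completes the proof.
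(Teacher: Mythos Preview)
Your argument is correct: the three-regime split and the pointwise kernel bounds you derive are exactly what is needed, and the use of $d-3\ge 0$ to drop the bounded ratio in the outer regimes (and to keep the factor $(s/r)^{d-3}$ in the middle regime) is precisely the point. Note that the paper does not actually prove this lemma but merely quotes it from \cite[Lemma 3.1]{SeegerWaingerWright1997}; your sketch is the standard derivation and matches that reference.
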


The dependence of the $p$-range on $\beta$ is only used when we estimate ${{\mathfrak{M}}}_p g$.
We have not kept dependence of the set $E$ in the operators $R_1$ and $R_2$ since the operators with supremum over the full interval $[1,2]$ already satisfy satisfactory estimates. The operator $R_1$ is rather straightforward.

\begin{proposition}\label{Ronelem}
For all $1\le p\le q\le\infty$ we have
\begin{equation*}
\|R_1 f_0\|_{L^q(r^{d-1} \mathrm{d}r)}{\lesssim}
\| f_0\|_{L^p(s^{d-1} \mathrm{d}s)}.
\end{equation*}
\end{proposition}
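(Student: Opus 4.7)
The plan is to pointwise dominate $R_1 f_0$ by a positive linear integral operator $\widetilde R_1|f_0|$ whose kernel is supported in a bounded ``convolution-type'' strip, and then to obtain the full range $1\le p\le q\le\infty$ by Riesz--Thorin interpolation between the three corners $(p,q)=(1,1),(1,\infty),(\infty,\infty)$. The key simplification is that the constraints $t\in[1,2]$ and $t\le r/2$ appearing in the supremum force $r\ge 2$ and, more importantly, $s\in[r-t,r+t]\subseteq[r/2,3r/2]$, so that $s\sim r$ on the support of the resulting kernel; in particular the polar weights $s^{d-1}$ and $r^{d-1}$ are comparable.

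First I would replace $1/t$ by $1$ (using $t\ge 1$) and enlarge the range of integration by $[r-t,r+t]\subseteq[\max(r-2,r/2),r+2]$ (using $t\le\min(2,r/2)$), obtaining the pointwise bound
\[R_1 f_0(r)\;\le\; \bbone_{r\ge 2}\int_{\max(r-2,\,r/2)}^{r+2}|f_0(s)|\,\ud s \;=:\; \widetilde R_1|f_0|(r).\]

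Next I verify the three corner estimates for $\widetilde R_1:L^p(s^{d-1}\ud s)\to L^q(r^{d-1}\ud r)$. The case $(\infty,\infty)$ is immediate since the integration interval has length at most $4$. For $(1,\infty)$, one uses $s\ge r/2\ge 1$ on the support, so $s^{-(d-1)}\le (r/2)^{-(d-1)}\lesssim 1$, which converts the $L^1(\ud s)$-integral into an $L^1(s^{d-1}\ud s)$-integral and yields $\widetilde R_1|f_0|(r)\lesssim\|f_0\|_{L^1(s^{d-1}\ud s)}$ pointwise. For $(1,1)$, Fubini reduces matters to showing that for each fixed $s\ge 1$ the admissible $r$-set is contained in $[\max(2,s-2),\min(s+2,2s)]$ and satisfies $\int r^{d-1}\,\ud r\lesssim s^{d-1}$; the large-$s$ case $s\ge 4$ gives $r\sim s$ with interval length $\le 4$, and the regime $s\in[1,4]$ is handled by the trivial lower bound $s^{d-1}\gtrsim 1$ against an $O(1)$ contribution.

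Applying Riesz--Thorin to the positive linear operator $\widetilde R_1$ then gives the required estimate for all $1\le p\le q\le\infty$. There is no serious obstacle here; the only point deserving attention is the simultaneous use of the two constraints $t\ge 1$ and $t\le r/2$, which confines the kernel support to the region $s\sim r\gtrsim 1$ where the weights $r^{d-1},s^{d-1}$ are comparable and the integration interval is of bounded length.
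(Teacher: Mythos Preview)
Your proof is correct. Both your argument and the paper's rest on the same key observation: the constraints $t\in[1,2]$, $t\le r/2$ force $r\ge 2$ and $s\in[r-2,r+2]$ with $s\sim r$, so the weights $s^{d-1}$ and $r^{d-1}$ are comparable and the kernel lives in a strip of bounded width. The execution differs: the paper inserts the weight $s^{(d-1)/p}$ directly, applies H\"older to obtain the pointwise bound $R_1 f_0(r)\lesssim r^{-(d-1)/p}\big(\int_{r-2}^{r+2}|f_0|^p s^{d-1}\,\ud s\big)^{1/p}$, and then finishes with a spatial orthogonality argument (the intervals $[r-2,r+2]$ have bounded overlap, and $q\ge p$ allows the $\ell^{q/p}\hookrightarrow\ell^1$ embedding). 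You instead dominate by a positive linear operator, verify the three corner bounds $(1,1),(1,\infty),(\infty,\infty)$ by direct kernel estimates, and appeal to the convexity of the type set (Riesz--Thorin). Your route is slightly more systematic and avoids the orthogonality bookkeeping; the paper's route avoids interpolation and gives the bound for each $(p,q)$ in one pass. Neither offers a real advantage here---the estimate is elementary either way.
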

\begin{proof}
The estimate is trivial for $q=p=\infty$, so we assume that $1 \leq p < \infty$. For any $t \in [1,2]$ and $r \geq 2t$, we have
\begin{align*}
\int_{r-t}^{r+t} |f_0(s)| {{\text{\,\rm d}}} s \lesssim r^{-\frac{d-1}{p}} \int_{r-t}^{r+t} |f_0(s)|\, s^{\frac{d-1}{p}} {{\text{\,\rm d}}} s {\lesssim} r^{-\frac{d-1}p} \int_{r-2}^{r+2} |f_0(s)|\, s^{\frac{d-1}{p}} {{\text{\,\rm d}}} s.
\end{align*}
Thus, by Hölder's inequality
\begin{align*}
R_1f_0(r) \lesssim r^{-\frac{d-1}p} \Big(\int_{r-2}^{r+2} |f_0(s)|^ps^{d-1} {{\text{\,\rm d}}} s\Big)^{1/p}.
\end{align*}
The case $q=\infty$ is immediate. For $1 \leq q < \infty$, we note that $R_1f_0(r)=0$ if $r<2$, and thus $(\int_0^\infty |R_1f_0(r)|^qr^{d-1} {{\text{\,\rm d}}} r)^{1/q}$ is bounded by a constant times
\begin{align*}
\Big(\int_2^\infty r^{-(d-1)(\frac 1p-\frac 1q)q}
\Big(\int_{r-2}^{r+2} |f_0(s)|^ps^{d-1} {{\text{\,\rm d}}} s\Big)^{q/p}{{\text{\,\rm d}}} r\Big)^{1/q}.
\end{align*}
Since $q \geq p$, a standard spatial orthogonality argument implies that the above is further bounded by a constant times $\Big(\int_0^\infty |f_0(s)|^ps^{d-1} {{\text{\,\rm d}}} s\Big)^{1/p}$, concluding the proof.
\end{proof}

The condition $q\le pd$ is necessary for the estimation of $R_2$. This is nontrivial only in the endpoint case $q=pd$.

\begin{proposition}\label{lem:Rtwo}
Let $d \geq 3$. For $1<p<\infty$, $q\le pd$,
\begin{equation}\label{eq:R2}
\|R_2 f_0\|_{L^q(r^{d-1} \mathrm{d}r)}{\lesssim}
\| f_0\|_{L^p(s^{d-1} \mathrm{d}s)}.
\end{equation}
Furthermore, the inequality also holds for $p=1$ and $q<pd$.
\end{proposition}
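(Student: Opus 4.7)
The plan is to derive a sharp pointwise bound for $R_2$ in terms of $\|f_0\|_{L^1}$ and then invoke Marcinkiewicz interpolation to reach the endpoint $q = pd$. First observe that $R_2 f_0(r) = 0$ whenever $r > 4/3$, since the supremum is taken over $t \in [1,2]$ with $t \ge 3r/2$. For $r \in (0, 4/3]$, the relations $3r/2 \le t \le 2$ force $t - r \ge t/3 \ge 1/3$ and $t + r \le 5t/3 \le 10/3$, so $[t-r, t+r] \subset I := [1/3, 10/3]$ for every admissible $t$. Since $s^{d-1} \sim 1$ on $I$, this yields the elementary pointwise bound
\[ R_2 f_0(r) \le \frac{1}{r} \int_I |f_0(s)|\, ds \lesssim \frac{\|f_0\|_{L^1(s^{d-1}\,ds)}}{r}, \qquad r \in (0, 4/3]. \]

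Integrating this bound against $r^{d-1}\,dr$ on $[0,4/3]$ immediately yields the strong-type $(1,q)$ inequality for every $q<d$, proving the second assertion of the proposition. The same pointwise estimate shows that $\{r : R_2 f_0(r) > \lambda\}$ is contained in $[0, C\|f_0\|_1/\lambda]$, a set of $r^{d-1}\,dr$-measure at most $d^{-1}(C\|f_0\|_1/\lambda)^d$, so $R_2$ is of weak-type $(1,d)$ from $L^1(s^{d-1}\,ds)$ to $L^{d,\infty}(r^{d-1}\,dr)$. Combined with the trivial strong-type $(\infty,\infty)$ bound $R_2 f_0 \le 2\|f_0\|_\infty \bbone_{[0,4/3]}$, Marcinkiewicz interpolation produces the endpoint strong-type bound $\|R_2 f_0\|_{L^{pd}(r^{d-1}\,dr)} \lesssim \|f_0\|_{L^p(s^{d-1}\,ds)}$ for every $1 < p < \infty$. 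For the remaining cases $1<p<\infty$ and $q < pd$, H\"older's inequality on the finite-measure interval $([0,4/3], r^{d-1}\,dr)$ reduces $\|R_2 f_0\|_{L^q(r^{d-1}\,dr)}$ to $\|R_2 f_0\|_{L^{pd}(r^{d-1}\,dr)}$, completing the proof.

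The main obstacle is the endpoint $q = pd$ for $p > 1$: strong-type $(1, d)$ fails (as seen by testing on $f_0$ concentrated near $s = 3/2$, where $R_2 f_0(r) \gtrsim 1/r$ and $\int r^{-d}\,r^{d-1}\,dr = \int r^{-1}\,dr$ diverges at the origin), so one cannot simply integrate the pointwise bound but must pass through the weak-type $(1,d)$ endpoint and real interpolation to reach strong-type $(p, pd)$ for $p > 1$. This also explains the exclusion of $(p,q)=(1,d)$ from the statement.
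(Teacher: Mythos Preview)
Your proof is correct and follows essentially the same approach as the paper: a pointwise bound of the form $R_2 f_0(r)\lesssim r^{-1/p}\|f_0\|_{L^p(s^{d-1}ds)}$ (the paper derives it for each $p\ge 1$ via H\"older, you only need the case $p=1$), then a weak-type endpoint, then Marcinkiewicz interpolation. The only organizational difference is that the paper interpolates between weak-type $(p_0,p_0d)$ bounds for two different values of $p_0$, whereas you interpolate between weak-type $(1,d)$ and the trivial $(\infty,\infty)$ bound; both yield the same conclusion.
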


\begin{proof}
By Hölder's inequality, we have the pointwise estimate
\begin{align}
|R_2 f_0(r)|
&{\lesssim} \sup_{\substack{t\in [1,2]\\t\ge 3r/2}}\Big(r^{-1}\int_{t-r}^{t+r} |f_0(s)|^p s^{d-1} {{\text{\,\rm d}}} s\Big)^{1/p} \notag \\
&{\lesssim} \Big( r^{-1}\int_{1/4}^4 |f_0(s)|^p s^{d-1} {{\text{\,\rm d}}} s\Big)^{1/p}. \label{eq:R2 pointwise}
\end{align}
Define the measure $\mu_d$ on ${{\mathbb{R}}}^+$ by ${{\text{\,\rm d}}}\mu_d=r^{d-1} {{\text{\,\rm d}}} r$. Direct integration in $L^q(\mathrm{d}\mu_d)$ gives \eqref{eq:R2} for $1\le p < \infty$ and $q < pd$. To get the endpoint, we first prove a weak-type inequality.
By \eqref{eq:R2 pointwise} we have that for some $C>1$
\begin{align*}
\mu_d\big (\{r: |R_2 f_0(r)|>\alpha\}) & \le \mu_d\big(\big\{r:
r^{-1} \int_{1/4}^4 |f_0(s)|^p s^{d-1} {{\text{\,\rm d}}} s
>(C^{-1} \alpha)^p \big\}\big)
\\
&=\frac{1}{d} C^{pd}\alpha^{-pd} \Big( \int_{1/4}^4 |f_0(s)|^p s^{d-1} {{\text{\,\rm d}}} s\Big)^{d},
\end{align*}
which shows that $R_2$ maps $L^p(\mu_d)$ to $L^{q,\infty}(\mu_d)$ for $q=pd$. Applying the Marcinkiewicz interpolation theorem with different values of $p$, this upgrades to the $L^p(\mu_d)\to L^q(\mu_d)$ inequality for $1 < p < \infty$.
\end{proof}
We now estimate the operator ${{\mathfrak{M}}}_p$. The interesting $p$-regime is
for $1 \leq p \leq \frac{d}{d-1}$, since for $p>\frac{d}{d-1}$ one can obtain bounds rather trivially, uniformly in $E$.

\begin{proposition}\label{lem:trivial holder}
Let $E\subset [1,2]$. For all $p > \frac{d}{d-1}$ and $1 \leq q \leq \infty$,
\begin{equation*}
\|{{\mathfrak{M}}}_p g\|_{L^q( r^{d-1} \mathrm{d}r)} \lesssim \| g \|_{p}.
\end{equation*}
\end{proposition}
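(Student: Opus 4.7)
The idea is that once $p>d/(d-1)$, the operator $\fM_p$ is controlled by a single Hölder application, and the support restriction coming from $t\in[1,2]$ together with the constraint $r/2<t<3r/2$ confines the problem to a bounded $r$-interval.

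The plan is the following. First, I would note that the condition $t\in E\subseteq[1,2]$ combined with $r/2<t<3r/2$ forces $r\in(2/3,4)$, so $\fM_p g(r)$ vanishes outside this bounded interval. Hence it suffices to show the pointwise bound $\fM_p g(r)\lesssim\|g\|_p$ uniformly in $r\in(2/3,4)$; integrating against $r^{d-1}\,\mathrm{d}r$ on a bounded interval then trivially yields the $L^q$ estimate for every $1\le q\le\infty$.

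Second, for the pointwise estimate I would apply Hölder's inequality to the integral in \eqref{eq:Mp def}: for any fixed $t\in E$ with $r/2<t<3r/2$,
\begin{equation*}
\Big|\int_{|r-t|}^{r+t} s^{\frac{d-1}{p'}-1}g(s)\,\mathrm{d}s\Big|
\le \|g\|_p\Big(\int_{|r-t|}^{r+t} s^{d-1-p'}\,\mathrm{d}s\Big)^{1/p'}.
\end{equation*}
The upper limit is bounded by $r+t\le 6$, and the integrand $s^{d-1-p'}$ is integrable near $s=0$ precisely when $d-1-p'>-1$, i.e.\ $p'<d$, i.e.\ $p>\tfrac{d}{d-1}$. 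Under this hypothesis the $s$-integral is finite uniformly in $r,t$, yielding $\fM_p g(r)\lesssim r^{1-d}\|g\|_p\lesssim\|g\|_p$ for $r\in(2/3,4)$.

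The argument involves no obstacle, since the exponent $\tfrac{d}{d-1}$ appears exactly as the Hölder threshold, and the role of $E$ is irrelevant beyond the containment $E\subseteq[1,2]$. The only minor point to check is that the lower bound $r\ge 2/3$ really does make $r^{1-d}$ uniformly bounded, which is immediate. Combining the pointwise estimate with the bounded support of $\fM_p g$ yields
\begin{equation*}
\|\fM_p g\|_{L^q(r^{d-1}\mathrm{d}r)}\lesssim \|g\|_p\Big(\int_{2/3}^{4} r^{d-1}\,\mathrm{d}r\Big)^{1/q}\lesssim\|g\|_p,
\end{equation*}
completing the proof.
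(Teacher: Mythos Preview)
Your proposal is correct and follows essentially the same approach as the paper: both note that $\fM_p g$ is supported in $[2/3,4]$, apply H\"older's inequality to obtain the pointwise bound $|\fM_p g(r)|\lesssim\|g\|_p$ from the integrability of $s^{d-1-p'}$ on a bounded interval (valid precisely when $p>\tfrac{d}{d-1}$), and then conclude the $L^q$ estimate from the bounded support.
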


\begin{proof}
We note that ${{\mathfrak{M}}}_p g$ is supported in $[2/3, 4]$. Moreover ${{\mathfrak{M}}}_p g= {{\mathfrak{M}}}_p [g{{\mathbbm 1}}_{[0,4]}]$. Since $s^{d-1-p'}{{\mathbbm 1}} _{[0,4]}$ belongs to $L^{p'}(ds)$ for $p>\frac{d}{d-1}$ the pointwise bound
$|{{\mathfrak{M}}}_p g(r)|{\lesssim} \|g\|_p$ follows from H\"older's inequality. This implies the $L^p\to L^q$ bounds, by the stated support properties.
\end{proof}

For the interesting range $p\le \frac{d}{d-1}$ we distinguish the cases $1 \leq p < \frac{d}{d-1}$, and $p=\frac{d}{d-1}$.

\begin{proposition} \label{lem:rightlineendpt}
Let $1\le p<\frac{d}{d-1}$.
For all $p\le q<\infty$,
\begin{equation*}
\|{{\mathfrak{M}}}_p g\|_{L^q(r^{d-1} {{\text{\,\rm d}}} r)} \le C(p,d)
\sup_{\delta<1}
\delta^{d-1-\frac dp+\frac 1q}
N(E,\delta)^{\frac 1q} \| g \|_{p},
\end{equation*}
where $C(p,d){\lesssim} \tfrac{p}{d-p(d-1)}.$

\end{proposition}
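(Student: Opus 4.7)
The plan is to reduce the estimate to a weighted Hardy--Muckenhoupt inequality. Since $\fM_p g \le \fM_p |g|$, I first assume $g \ge 0$. The kernel $s^{(d-1)/p'-1}$ is positive on $(0,\infty)$, so enlarging the integration interval only increases the integral. Because every $t \in E \cap [r/2, 3r/2]$ satisfies $|r-t| \ge \dist(r,E)$, and $r+t \le 5r/2 \le 10$ on the support $[2/3,4]$ of $\fM_p g$, I obtain the pointwise bound
\[
\fM_p g(r) \lesssim r^{1-d} \int_{\dist(r,E)}^{10} s^{(d-1)/p'-1} g(s)\, \ud s,
\]
with $r^{1-d} \sim 1$ on $[2/3,4]$.

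Since the right-hand side depends on $r$ only through $\dist(r,E)$, I push forward Lebesgue measure on $[2/3,4]$ by $r \mapsto \dist(r,E)$, producing a measure $\ud\phi$ on $[0,\infty)$ with $\phi(u) \lesssim \min(uN(E,u),1)$ by a standard covering argument. Hence
\[
\|\fM_p g\|_{L^q(r^{d-1}\ud r)}^q \lesssim \int_0^{10} \Bigl( \int_u^{10} s^{(d-1)/p'-1} g(s)\, \ud s \Bigr)^q \ud\phi(u).
\]

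The main step is to apply the Muckenhoupt--Bradley weighted Hardy inequality. Setting $\mu = (d-1)/p'-1$ and choosing the $L^p$-weight $v(s) = s^{-\mu p}$ so that $\|s^\mu g\|_{L^p(v\,\ud s)} = \|g\|_p$, the Muckenhoupt hypothesis reads
\[
M := \sup_{u>0} \phi(u)^{1/q} \Bigl( \int_u^{10} s^{\mu p'}\, \ud s \Bigr)^{1/p'} < \infty.
\]
Since $p<d/(d-1)$ yields $p'>d$, one has $\mu p' = (d-1)-p' < -1$; hence for small $u$ the inner integral is $\sim u^{d-p'}/(p'-d)$, whose $1/p'$-power equals $(p'-d)^{-1/p'} u^{-\kappa}$ with $\kappa = d/p - (d-1)$. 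Combined with $\phi(u)^{1/q} \lesssim (uN(E,u))^{1/q}$, this gives $M \lesssim (p'-d)^{-1/p'} \sup_\delta \delta^{d-1-d/p+1/q} N(E,\delta)^{1/q}$. The weighted Hardy inequality then yields the asserted bound with constant $\sim M$, and the identity $p'-d = p'\kappa$ implies $(p'-d)^{-1/p'} \lesssim \kappa^{-1/p'} \le \kappa^{-1} \sim p/(d-p(d-1))$, matching the claimed $C(p,d)$.

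The main obstacle is the careful verification of weighted Hardy on the non-doubling measure $\ud\phi$ uniformly in $1 \le p \le q < \infty$; in particular, at the endpoint $p=1$ one has $p'=\infty$ so the inner integral is replaced by the essential supremum $\sup_{s \ge u} s^\mu = u^{-1}$, which reduces to the same quantity appearing on the right-hand side and gives the bound uniformly down to $p=1$.
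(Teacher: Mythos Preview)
Your approach is correct and genuinely different from the paper's. The paper argues directly by dyadic decomposition: it partitions $[2/3,4]$ into the level sets $D_n=\{r:2^{-n}\le\dist(r,E)<2^{-n+1}\}$, dyadically decomposes the $s$-integral on each $D_n$, and then applies Minkowski's inequality twice (in the $n$- and $\ell$-sums) to arrive at the constant $\sum_\ell 2^{-\ell\kappa}\sim (1-2^{-\kappa})^{-1}\sim\kappa^{-1}$ with $\kappa=\tfrac dp-(d-1)$. Your route replaces this hands-on argument by a single appeal to the Muckenhoupt--Bradley weighted Hardy inequality after pushing forward Lebesgue measure by $r\mapsto\dist(r,E)$; the covering estimate $\phi([0,u])\lesssim uN(E,u)$ then reads off the Muckenhoupt constant directly. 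This is conceptually cleaner and in fact yields a sharper blow-up $(p'-d)^{-1/p'}\sim\kappa^{-1/p'}$, which is no worse than the claimed $\kappa^{-1}$ and strictly better as $p\to\tfrac d{d-1}$.

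Two points worth making explicit in a final write-up, since you flag them yourself: first, the Hardy inequality you invoke must allow an arbitrary Borel measure $d\phi$ on the target side (not just a weight), and you should note that the push-forward may have an atom at $0$ when $|\overline E|>0$, in which case both sides of the proposition are infinite and the inequality is vacuous. Second, the constant in Muckenhoupt--Bradley for $1\le p\le q<\infty$ is $\le q^{1/q}(p')^{1/p'}M\le e^{2/e}M$, so it is indeed uniform; stating this removes any doubt about the dependence of $C(p,d)$. With these clarifications the argument is complete.
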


\begin{proof} We argue as in \cite{SeegerWaingerWright1997}. We may assume
$N(E, \delta)\ge 1$ and $d-1-\frac dp+\frac 1q\ge 0$, i.e.
$q\le \frac{p}{d-p(d-1)}$
; otherwise there is nothing to prove.
Since $p<\frac{d}{d-1}$ and $p\le q$ we see that $ \dim_{\mathrm M} E\le
q(\frac{d-1}{p'} +\frac 1q-\frac 1p)= 1- qd(\frac 1p -\frac{d-1}d)<1. $
Observe
that ${{\mathfrak{M}}}_p g(r)=0$ unless $ \frac{2}{3} \leq r \leq 4$.

For each $n \geq 0$, let
\Be\label{eq:UnDn}
U_n:=\{r\in [\tfrac 23,4]: {{\text{\rm dist}}}(r, E)\le 2^{-n+1}\}, \qquad D_n=U_n\setminus U_{n+1}
\Ee
Since ${
\mathrm{dim}}_{\mathrm M} E<1$ the set $\overline E=\bigcap_{n=0}^{\infty} U_n=
[\tfrac 23,4] \setminus \bigcup_{n=0}^\infty D_n$ is of measure zero.
If $r \in D_n$, then
\begin{align*}
{{\mathfrak{M}}}_p g(r) & \lesssim \int_{2^{-n}}^{6} s^{\frac{d-1}{p'} - 1} |g(s)| {{\text{\,\rm d}}} s \lesssim \sum_{\ell = 0}^{n+2} 2^{(-n+ \ell)(\frac{d-1}{p'} - 1)} \int_{2^{-n+\ell}}^{2^{-n+ \ell + 1}} |g(s)| {{\text{\,\rm d}}} s \\
& \lesssim \sum_{\ell = 0}^{n+2} 2^{(-n+ \ell)(\frac{d-1}{p'} - 1)} 2^{(-n+\ell)/p'} \Big( \int_{2^{-n+\ell}}^{2^{-n+ \ell + 1}} |g(s)|^p {{\text{\,\rm d}}} s \Big)^{\frac{1}{p}}.
\end{align*}

We have from the above observations and Minkowski's inequality that
\begin{align*}
&\| {{\mathfrak{M}}}_p g \|_{L^q(r^{d-1} \mathrm{d} r)} \lesssim \Big( \sum_{n \geq 0} \int_{D_n} |{{\mathfrak{M}}}_p g(r)|^q {{\text{\,\rm d}}} r \Big)^{1/q}\\
& \lesssim \Big(\sum_{n\ge 0}
|D_n| \Big[ \sum_{\ell=0}^{n+2} 2^{(\ell-n)(d-1-\frac dp)}\Big( \int_{2^{\ell-n}}^{2^{\ell+1-n}} |g(s)|^p {{\text{\,\rm d}}} s \Big)^{\frac{1}{p}} \Big]^q \Big)^{\frac 1q}
\end{align*}
and by Minkowski's inequality and the embedding $\ell^p \subset \ell^q$ this is
\begin{align*}
&{\lesssim}
\sum_{\ell\ge 0}
2^{-\ell(\frac dp-d+1)}
\Big(\sum_{ \substack{n \in {\mathbb N}_0: \\ n\ge \ell-2}} \Big[
2^{n(\frac dp-d+1)}
|D_n|^{\frac 1q}
\Big(\int_{2^{-n+\ell}}^{2^{-n+\ell+1}} |g(s)|^p {{\text{\,\rm d}}} s\Big)^{\frac 1p}
\Big]^q \Big)^{\frac 1q}
\\&{\lesssim}
\sum_{\ell\ge 0} 2^{-\ell(\frac dp-d+1)}
\Big(\sum_{n\ge 0} 2^{n(\frac dp-d+1)p}
|D_n|^{\frac pq}
\int_{2^{-n+\ell}}^{2^{-n+\ell+1}} |g(s)|^p {{\text{\,\rm d}}} s \Big)^{\frac 1p}.
\end{align*}

We may estimate, for each fixed $\ell$, the $n$-sum by
$ \sup_{n \geq 0} 2^{n(\frac dp-d+1)}
|D_n|^{1/q} \|g\|_p$.
Since $p<\frac{d}{d-1}$ we may then sum in $\ell$ and use
$|D_n|\lesssim N(E, 2^{-n}) 2^{-n}$
to get
\begin{align*}
\| {{\mathfrak{M}}}_p g \|_{L^q(r^{d-1} {{\text{\,\rm d}}} r)} &\lesssim
\tfrac{1}{1-2^{ d-1-\frac dp}}
\sup_{n \geq 0} 2^{n(\frac dp-d+1)}
|D_n|^{1/q} \|g\|_p,
\\&\lesssim \tfrac{p}{d-p(d-1)}\,
\sup_{n \geq 0} 2^{n(\frac dp-d+1-\frac 1q)}
N(E,2^{-n})^{\frac 1q} \|g\|_p,
\end{align*}
which concludes the proof.
\end{proof}

We argue in a slightly different way for the endpoint $p=\frac{d}{d-1}$.

\begin{proposition} \label{lem:rightlineendpt=beta=1}
Let $E\subset [1,2]$, and $p_d:=\frac{d}{d-1}$.
Then for $q\ge p_d$,
\begin{equation}\label{LpLq-beta=1}
\|{{\mathfrak{M}}}_p g \|_{L^q (r^{d-1} \mathrm{d}r)} {\lesssim} \sup_{\delta<1} \delta^{\frac{1}{q}}N(E,\delta)^{\frac 1q} (\log \tfrac 1\delta) ^{\frac 1d} \| g \|_{p_d}.
\end{equation}
\end{proposition}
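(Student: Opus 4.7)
The plan is to follow the dyadic decomposition used in the proof of Proposition \ref{lem:rightlineendpt}, observe that the geometric factor in the interior sum degenerates at the critical exponent $p = p_d$, and then apply a discrete weighted Hardy inequality (in the Muckenhoupt/Bradley spirit) to convert the resulting logarithmic loss into the sharp $(\log(1/\delta))^{1/d}$ factor.

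As in the proof of Proposition \ref{lem:rightlineendpt}, set $J_m := [2^{-m}, 2^{-m+1}]$ and let $U_n, D_n$ be as in \eqref{eq:UnDn}. For $r \in D_n$ one has $\dist(r, E) \sim 2^{-n}$; splitting the inner integral in the definition of $\fM_{p_d}$ into pieces over $J_m$ for $-O(1) \leq m \leq n$ and applying H\"older on each piece gives
\[ |\fM_{p_d} g(r)| \lc \sum_{m \leq n + O(1)} b_m, \qquad b_m := \Big(\int_{J_m} |g(s)|^{p_d} \ud s\Big)^{1/p_d}, \]
because the dyadic prefactors now combine to the exponent $\tfrac{d-1}{p_d'} - 1 + \tfrac{1}{p_d'} = \tfrac{d}{p_d'} - 1 = 0$. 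This is the critical cancellation distinguishing the endpoint from the subcritical regime of Proposition \ref{lem:rightlineendpt}, where the corresponding positive exponent made the $\ell$-sum geometric and allowed an $\ell^\infty$-in-$n$ estimate. Raising to the $q$-th power and summing (using $r \sim 1$ on $D_n$) yields
\[ \|\fM_{p_d} g\|_{L^q(r^{d-1} \ud r)}^q \lc \sum_n |D_n| B_n^q, \qquad B_n := \sum_{m \leq n+O(1)} b_m. \]

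The crux is now a discrete weighted Hardy inequality: for $1 < p \leq q < \infty$, nonnegative $b_m$, and nonnegative weights $u_n, v_m$, the bound $(\sum_n u_n (\sum_{m \leq n} b_m)^q)^{1/q} \lc A (\sum_m v_m b_m^p)^{1/p}$ holds with
\[ A = \sup_N \Big(\sum_{n \geq N} u_n\Big)^{1/q} \Big(\sum_{m \leq N} v_m^{1-p'}\Big)^{1/p'}. \]
We apply this with $p = p_d$ (so $p_d' = d$), $u_n = |D_n|$, $v_m = 1$, to conclude $\|\fM_{p_d} g\|_{L^q(r^{d-1} \ud r)} \lc A \|b\|_{\ell^{p_d}} \le A \|g\|_{p_d}$, where
\[ A \sim \sup_N \Big(\sum_{n \geq N} |D_n|\Big)^{1/q} N^{1/d}. \]
Since one may assume $|\overline E| = 0$ (otherwise $M_E$ is unbounded on $L^{p_d}_\mathrm{rad}$, as in the proof of Lemma \ref{lem:lowerbd-pd}), we bound $\sum_{n \geq N} |D_n| \leq |U_N| \lc 2^{-N} N(E, 2^{-N})$, and the asserted estimate follows upon identifying $N$ with $\log_2(1/\delta)$. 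The main obstacle is this final Hardy step: the crude pointwise bound $B_n \leq (n+O(1))^{1/d} \|g\|_{p_d}$ followed by summation in $n$ is too lossy by a divergent logarithmic factor, so the monotone structure of $\{B_n\}$ must be exploited via the Muckenhoupt-type condition.
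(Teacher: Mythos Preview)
Your argument is correct and takes a genuinely different route from the paper's. Both proofs start from the same pointwise bound $\fM_{p_d} g(r) \lesssim \sum_{m \le n+O(1)} b_m$ for $r \in D_n$, but diverge at the summation step. The paper performs a \emph{super-dyadic} regrouping: it collects $\{D_n : 2^{\ell-1} \le n < 2^\ell\}$ into a block $\Omega_\ell$ with $|\Omega_\ell| \lesssim B^q 2^{-\ell q/d}$, and correspondingly coarsens the $s$-integral into intervals $J_m = [2^{-2^{m+1}}, 2^{-2^m}]$ so that H\"older on each piece picks up exactly $(\int_{J_m} s^{-1}\,\ud s)^{1/d} \sim 2^{m/d}$; the result then follows from Minkowski in $\ell^q$ and the embedding $\ell^{p_d} \hookrightarrow \ell^q$. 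You instead keep the fine dyadic scale throughout and invoke the Muckenhoupt--Bradley discrete weighted Hardy inequality, whose $A$-constant packages precisely the same balance between the tail $(\sum_{n \ge N} |D_n|)^{1/q}$ and the accumulated count $N^{1/d}$. Your version is shorter and makes transparent that the endpoint is governed by a classical Hardy condition; the paper's super-dyadic trick is essentially a self-contained proof of that Hardy inequality in this special setting, with the advantage of not citing an external black box. One minor point: your justification for $|\overline E| = 0$ via unboundedness of $M_E$ on $L^{p_d}_\rad$ is slightly indirect, since the proposition concerns only $\fM_p$; the cleaner logic (which the paper uses and you implicitly rely on) is that if the constant $B$ on the right-hand side is finite then $\delta N(E,\delta) \lesssim (\log \tfrac 1\delta)^{-q/d} \to 0$, forcing $|\overline E| = 0$, and otherwise the inequality is vacuous.
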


\begin{proof}
We may assume $\sup_{\delta<\frac 12} \delta^{\frac{1}{q}}N(E,\delta)^{\frac 1q} (\log \tfrac 1\delta) ^{\frac 1d}<\infty$. Then, if for $n \geq 1$ we consider the sets $U_n$, $D_n$ as in \eqref{eq:UnDn} we have that $|U_n|{\lesssim} n^{-q/d}$. Recalling that $\bar E=\bigcap_{n=0}^\infty U_n$, this implies that $\overline E$ is of measure zero.
Note that ${{\mathfrak{M}}}_pg(r)=0$ unless $\frac{2}{3} \leq r \leq 4$.
We set, for $\ell>0$, \[\Omega_\ell:=\bigcup_{2^{\ell-1}\le n<2^{\ell}} D_n.\]
Note that
\begin{equation}\label{eq:Omell-est}
|\Omega_\ell| {\lesssim} |\{r:{{\text{\rm dist}}}(r,E){\lesssim} 2^{-2^{\ell-1}} \}| {\lesssim} N(E, 2^{-2^{\ell-1}}) 2^{-2^{\ell-1}} {\lesssim} B^q 2^{-\ell q/d},
\end{equation}
where $B:= \sup_{\delta<1} \delta^{\frac{1}{q}}N(E,\delta)^{\frac 1q} (\log \frac 1\delta) ^{\frac 1d} \gtrsim 1$; here we assume without loss of generality $E \neq \emptyset$.
We estimate
${{\mathfrak{M}}}_p g(r){\lesssim} {{\mathfrak{N}}}_{p} g(r) +{{\mathcal{E}}} g(r) $ where
\begin{align*}
{{\mathfrak{N}}}_{p} g(r)&:= \sum_{n>0}{{\mathbbm 1}}_{D_n}(r) \int_{2^{-n}}^{1/2} s^{-1/d} g(s) {{\text{\,\rm d}}} s,
\\
{{\mathcal{E}}} g(r) & {\lesssim} {{\mathbbm 1}}_{[1/3,2]}(r) \int_{1/2}^4 s^{-1/d} g(s) {{\text{\,\rm d}}} s.
\end{align*}
For the term ${{\mathcal{E}}}$ we just use Hölder's inequality and obtain
\begin{equation*}
\|{{\mathcal{E}}} g \|_{L^q(r^{d-1} \mathrm{d}r)} {\lesssim}
\Big(\int_{1/2}^4 \Big[\int_{1/2}^4 s^{-1/d} g(s) {{\text{\,\rm d}}} s\Big]^q {{\text{\,\rm d}}} r\Big)^{1/q} {\lesssim} \|g\|_{p_d}.
\end{equation*}

We turn to the term ${{\mathfrak{N}}}_{p} g(r) $. For $m\ge 0$, let $J_m=[2^{-2^{m+1}}, 2^{-2^{m}}]$. Then, if $r \in D_n$, with $2^{\ell-1} \le n< 2^{\ell}$, we have by Hölder's inequality
\begin{align*} |{{\mathfrak{N}}}_{p} g(r)|
& \lesssim \int_{ 2^{-2^{\ell}}}^{1/2} s^{-1/d} |g(s)| {{\text{\,\rm d}}} s
\le \sum_{k=0}^\ell\int_{J_{\ell-k}}
s^{-1/d} |g(s)|{{\text{\,\rm d}}} s\\ &
\le \sum_{k=0}^{\ell} \Big(\int_{J_{\ell-k}} s^{-1} {{\text{\,\rm d}}} s\Big)^{1/d} \Big(\int_{J_{\ell-k}}|g(s)|^{\frac d{d-1}} {{\text{\,\rm d}}} s\Big)^{\frac{d-1}{d}}
\\&{\lesssim}
\sum_{k=0}^{\ell} 2^{ (\ell-k)/d} \Big(\int_{J_{\ell-k}}|g(s)|^{p} {{\text{\,\rm d}}} s\Big)^{1/p}.
\end{align*}
Consequently,
\begin{align*}
\|{{\mathfrak{N}}}_{p} g\|_{L^q(r^{d-1}\mathrm{d}r)}&{\lesssim}
\Big(\sum_{\ell=1}^\infty \sum_{2^{\ell-1} \leq n < 2^{\ell}} \int_{D_n} |{{\mathfrak{N}}}_{p}g(r)|^q {{\text{\,\rm d}}} r\Big)^{1/q}
\\&
{\lesssim}\Big(\sum_{\ell=1}^\infty |{\Omega}_\ell| \Big(\sum_{k=0}^\ell 2^{(\ell-k)/d}\Big (\int_{J_{\ell-k}}|g(s)|^p {{\text{\,\rm d}}} s\Big)^{1/p }\Big)^q\Big) ^{ 1/q}.
\end{align*}
By the triangle inequality in $\ell^{q}$ the above right-hand side is
\begin{align*}
&{\lesssim}\sum_{k=0}^\infty 2^{-k/d} \Big(\sum_{\ell=k}^\infty |{\Omega}_\ell| 2^{\ell q/d} \Big(\int_{J_{\ell-k}}|g|^p{{\text{\,\rm d}}} s\Big)^{q/p}\Big)^{1/q}
\\&{\lesssim} B
\sum_{k=0}^\infty 2^{-k/d} \Big(\sum_{\ell=k}^\infty \Big(\int_{J_{\ell-k}}|g|^p {{\text{\,\rm d}}} s\Big)^{q/p}\Big)^{1/q}
\\
&{\lesssim} B
\sum_{k=0}^\infty 2^{-k/d} \Big(\sum_{\ell=k}^\infty \int_{J_{\ell-k}}|g|^p {{\text{\,\rm d}}} s\Big)^{1/p}
{\lesssim} B\|g\|_p;
\end{align*}
here we have used \eqref{eq:Omell-est} in the second inequality, the embedding $\ell^p\hookrightarrow\ell^q$ for $p\le q$ in the third and the disjointness of the intervals $J_{\ell-k}$ for $\ell\ge k$ and fixed $k \geq 0$. This establishes the bound $\| {{\mathfrak{N}}}_{p} g \|_{L^q(r^{d-1} \mathrm{d}r)} \lesssim B \|g \|_{p}$, which concludes the proof of \eqref{LpLq-beta=1}.
\end{proof}

We are now in a position to conclude Theorem \ref{thm:highdim}.

\begin{proof}[Proof of Theorem \ref{thm:highdim}]
We recall that the triangle $\Delta_\beta$ is given by
\begin{equation*}
\Delta_\beta= \big\{(\tfrac{1}{p},\tfrac{1}{q}) \in [0,1]^2 : \tfrac{1}{pd} \leq \tfrac{1}{q} \leq \tfrac{1}{p}, \,\,\, \tfrac{1-\beta}{q} + d-1 \geq \tfrac{d}{p} \big\},
\end{equation*}
and note that the line segment $[P_{2,\beta}, P_{3,\beta}^{\mathrm{rad}}]$ satisfies the second condition with equality. From Corollary \ref{cor: Delta beta nec} we have ${{\mathcal{T}}}_E^{\mathrm{rad}} \subset \Delta_\beta$. For the sufficient conditions, we have by Propositions \ref{Ronelem} and \ref{lem:Rtwo} that if $(\frac{1}{p}, \frac{1}{q}) \in \Delta_\beta$ then $R_1$ and $R_2$ are $L^p(s^{d-1} \mathrm{d}s) \to L^q(r^{d-1} \mathrm{d}r)$ bounded. Thus, by Lemma \ref{lem:decomposition d3}, we shall only focus on ${{\mathfrak{M}}}_p$.

\medskip
(i) Let $\beta<1$. If $(\frac{1}{p},\frac{1}{q}) \in [P_{2,\beta},P_{3,\beta}^{{\text{\rm rad}}}]$ and $\beta <1$ then $1 \leq p<\frac{d}{d-1}$. We can then apply Proposition \ref{lem:rightlineendpt} together with the assumption $\sup_{\delta<1} \delta^{\beta} N(E,\delta) <\infty$ to deduce that ${{\mathfrak{M}}}_p$ is $L^p \to L^q(r^{d-1}\mathrm{d}r)$ bounded on that line segment. The remaining bounds for $(\frac{1}{p}, \frac{1}{q}) \in \Delta_\beta$ follow by interpolation with the case $q=p=\infty$. Therefore ${{\mathcal{T}}}_E^{{\text{\rm rad}}}=\Delta_\beta$.

\medskip

(ii) If $\beta <1$ and $\sup_{\delta < 1} \delta^{\beta}N(E,\delta)=\infty$,
we have by Lemma \ref{lem: nec easy} that \[\| M_E \|_{L^p_{\mathrm{rad}} \to L^q } \gtrsim \sup_{\delta < 1} N(E,\delta)^{1/q} \delta^{\beta/q} = \infty \text{ if } (\tfrac{1}{p}, \tfrac{1}{q}) \in [P_{2,\beta}, P_{3,\beta}^{{{\text{\rm rad}}}}].\]
For the positive bounds, we note by Proposition \ref{lem:trivial holder} that ${{\mathfrak{M}}}_p$ is $L^p \to L^q(r^{d-1}\mathrm{d}r)$ bounded for $p > \frac{d}{d-1}$. For $p \leq \frac{d}{d-1}$, we use that for every $\varepsilon>0$ we have
$N(E,\delta) \lesssim_\varepsilon \delta^{-\beta-\varepsilon}
$ uniformly in $\delta>0$. Since $\beta <1$, Proposition \ref{lem:rightlineendpt=beta=1} guarantees $L^{p_d} \to L^q(r^{d-1}\mathrm{d}r)$ boundedness for $q \geq p_d$ for a choice of $\varepsilon>0$ sufficiently small. Finally, note that
\begin{equation}\label{eq: delta mink easy}
\delta^{d-1-\frac{d}{p} + \frac{1}{q}}N(E,\delta)^{1/q} \lesssim_\varepsilon \delta^{d-1-\frac{d}{p} + \frac{1-\beta-\varepsilon}{q}}.
\end{equation}
For $p< \frac{d}{d-1}$ and $(\frac{1}{p}, \frac{1}{q}) \in \Delta_\beta \backslash [P_{2,\beta}, P_{3,\beta}^{{\text{\rm rad}}}]$, we have that $\frac{1-\beta}{q} + d-1 > \frac{d}{p}$. Thus, choosing a sufficiently small $\varepsilon>0$, the right-hand side of \eqref{eq: delta mink easy}
is uniformly bounded in $0 <\delta < 1$. We can then apply Proposition \ref{lem:rightlineendpt} to deduce that ${{\mathfrak{M}}}_p$ is $L^p \to L^q(r^{d-1}\mathrm{d}r)$ bounded in this case. Consequently, we have proven ${{\mathcal{T}}}_E^{\mathrm{rad}}= \Delta_\beta \backslash[P_{2,\beta}, P_{3,\beta}^{\mathrm{rad}}]$.

\medskip

(iii) Assume that ${\dim}_M E =1$. We already noted that ${{\mathcal{T}}}_E^{\mathrm{rad}}\subset \Delta_1$. Moreover, for $p_d=\frac{d}{d-1}$, we have from Lemma
\ref{lem:lowerbd-pd} that $\sup_{ \delta < 1} \delta (\log \frac{1}{\delta})^{\frac{q}{d}} N(E,\delta)$ must be finite if $\| M_E\|_{L^{p_d}_{\mathrm{rad}} \to L^q} \lesssim 1$. This establishes the $\subset$ implication. For the sufficient condition, we note that $P_{2,1}=(\frac{d-1}{d},\frac{d-1}{d})$ and $P_{3,1}^{\mathrm{rad}}=(\frac{d-1}{d},\frac{d-1}{d^2})$. If $(\frac{1}{p}, \frac{1}{q}) \in \Delta_1$ with $p > \frac{d}{d-1}$, $L^p \to L^q(r^{d-1}\mathrm{d}r)$ bounds immediately follow from Proposition \ref{lem:trivial holder}. On the other hand, if $p=\frac{d}{d-1}$, the bounds follow from Proposition \ref{lem:rightlineendpt=beta=1} under the assumption $\sup_{\delta<\frac 12} \delta N(E,\delta)(\log \tfrac 1\delta) ^{\frac qd}< \infty$. This establishes the $\supseteq$ implication, which concludes the proof.
\end{proof}

\section{2-dimensional results}\label{sec:2d}
In this section we consider the circular maximal function $M_E$ for radial functions on the plane. As stated in \cite[Lemma 5.1]{SeegerWaingerWright1997}
the formulas \eqref{eq:polar} and \eqref{eq:Kt-polar} yield by straightforward estimation a pointwise inequality which involves kernels that are more singular than their higher-dimensional counterparts. In what follows, for a radial function $f$, we continue to use the notation $f(x)=f_0(s)$ for $s=|x|$.

\begin{lemma}\label{lem:2d reduction}
Let $d=2$. Fix $1\le p<\infty$ and set
$g(s):=f_0(s)s^{1/p}$. Then
\begin{equation*}
M_E f(x)\lesssim {{\mathfrak{M}}}_p^+ g(r)+ {{\mathfrak{M}}}_p^- g(r)+\sum_\pm \sum_{i=1}^2 R_i^\pm f_0(r)
\end{equation*}
where $r=|x|$ and
\begin{align*}
{{\mathfrak{M}}}_p^- g(r)&:=\sup_{\substack{ t\in E\\r/2<t<3r/2}}
r^{-1}\Big|\int_{|r-t|}^{r+t}s^{ 1/2- 1/p}(s-|r-t|)^{-1/2} g(s){{\text{\,\rm d}}} s\Big|,
\\
{{\mathfrak{M}}}_p^+ g(r)&:=\sup_{\substack{ t\in E\\r/2<t<3r/2}}
r^{-1} \Big| \int_{|r-t|}^{r+t}s^{ 1/2- 1/p}(r+t-s)^{-1/2} g(s){{\text{\,\rm d}}} s\Big|,
\end{align*}
\begin{align*}
R_1^{-} f_0(r)\,&:=\,\sup_{\substack{t\in E\\ t\le r/2}} t^{-1/2}\Big|\int_{r-t}^{r}
|s-r+t|^{-1/2} f_0(s) {{\text{\,\rm d}}} s \Big|,
\\
R_1^+ f_0(r)\,&:=\,\sup_{\substack{ t\in E\\ t\le r/2}} t^{-1/2}\Big|\int_{r}^{r+t}
|r+t-s|^{-1/2} f_0(s) {{\text{\,\rm d}}} s \Big|,
\end{align*} and
\begin{align*}
R_2^- f_0(r)\,&:=\,\sup_{\substack{ t\in E\\ t\ge 3r/2}} r^{-1/2}\Big| \int_{t-r}^{t}
|s-t+r|^{-1/2} f_0(s) {{\text{\,\rm d}}} s \Big|,
\\
R_2^+ f_0(r)\,&:=\,\sup_{\substack{t\in E\\t\ge 3r/2}} r^{-1/2}\Big|\int_{t}^{t+r}
|r+t-s|^{-1/2} f_0(s) {{\text{\,\rm d}}} s\Big|.
\end{align*}
\end{lemma}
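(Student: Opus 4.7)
The plan is to specialize the formulas \eqref{eq:polar}, \eqref{eq:Kt-polar} to $d=2$ and then estimate the resulting kernel in a piecewise fashion. Setting $d=2$ and using $(r+t)^2-(r-t)^2=4rt$, the kernel becomes
\[ K_t(r,s)=\frac{s}{\sqrt{(r+t)^2-s^2}\,\sqrt{s^2-(r-t)^2}} =\frac{s}{\sqrt{(r+t-s)(r+t+s)(s-|r-t|)(s+|r-t|)}}. \]
This has inverse square root singularities at the endpoints $s=|r-t|$ and $s=r+t$ of the integration interval. The proof will proceed by splitting the supremum over $t\in E$ into the three regimes $r/2<t<3r/2$, $t\le r/2$ and $t\ge 3r/2$, which produce the operators $\fM_p^\pm$, $R_1^\pm$ and $R_2^\pm$ respectively, and by further splitting each $s$-integral at the midpoint $m=(|r-t|+(r+t))/2$ into a $(-)$-piece on $[|r-t|,m]$ and a $(+)$-piece on $[m,r+t]$.

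In the regime $r/2<t<3r/2$ we have $r+t+s\sim r$ throughout the integration, and on the $(-)$-piece one has $r+t-s\gtrsim r$ as well, so only the factor $(s-|r-t|)^{-1/2}$ is singular. Together with the trivial bound $s/\sqrt{s+|r-t|}\le s^{1/2}$, this yields
\[ K_t(r,s)\lesssim \frac{s^{1/2}}{r\,\sqrt{s-|r-t|}}, \]
and multiplying by $f_0(s)=s^{-1/p}g(s)$ produces exactly $\fM_p^-g(r)$. The $(+)$-piece is symmetric and produces $\fM_p^+g(r)$ via the trivial bound $s/\sqrt{s+|r-t|}\le s^{1/2}$ combined with $s-|r-t|\gtrsim r$ on $[m,r+t]$.

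In the regime $t\le r/2$, the variable $s$ ranges over $[r-t,r+t]\subset[r/2,3r/2]$, so $s\sim r$ and $s\pm|r-t|=s\pm(r-t)$; thus $s+(r-t)\sim r$ and $r+t+s\sim r$, reducing $K_t(r,s)$ to a constant multiple of $[(r+t-s)(s-r+t)]^{-1/2}$. Splitting at $s=r$: on $[r-t,r]$ one has $r+t-s\ge t$, so $K_t\lesssim t^{-1/2}(s-r+t)^{-1/2}$, which gives $R_1^-f_0(r)$; on $[r,r+t]$ the roles swap, producing $R_1^+f_0(r)$. The regime $t\ge 3r/2$ is analogous with the roles of $r$ and $t$ interchanged: $s\sim t$, $s+|r-t|\sim t$, $r+t+s\sim t$, and splitting at $s=t$ yields $R_2^\pm f_0(r)$ after replacing the non-singular factor of $\sqrt{r}$ with $\sqrt{r}$ via $(r+t-s)(s-t+r)$ analysis (one of the two factors is always $\gtrsim r$).

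The proof is essentially bookkeeping; the only conceptual point is the choice of splitting the interval of integration at its midpoint so that on each half only one of the two endpoint singularities is active, while the other factor can be replaced by its size at the midpoint. No step is substantively hard, and the stated inequality follows by summing the contributions of the two halves over the three regimes for $t$.
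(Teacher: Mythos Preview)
Your proof is correct and follows the same approach the paper indicates: the paper does not actually prove this lemma but cites \cite[Lemma 5.1]{SeegerWaingerWright1997} and says the inequality follows from \eqref{eq:polar}, \eqref{eq:Kt-polar} ``by straightforward estimation.'' Your specialization of the kernel to $d=2$, the factorization into four square-root factors, and the case-by-case bounding in the three $t$-regimes with a midpoint split of the $s$-integral is exactly that straightforward estimation, and all the comparisons ($r+t+s\sim r$ when $t\sim r$, $s\sim r$ when $t\le r/2$, $s\sim t$ when $t\ge 3r/2$, etc.) check out.
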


Given this lemma, it suffices to establish Lebesgue space bounds for the operators $R_1^\pm, R_2^\pm$ and ${{\mathfrak{M}}}_p^\pm$.

\subsection{The operators \texorpdfstring{$R_1^\pm$}{R1}}
Boundedness for these operators hold under a condition involving the Minkowski dimension. The argument is analogous to that in \cite[Proposition 5.2]{SeegerWaingerWright1997}.

\begin{proposition}\label{prop:R1R2}
For all $1 < p \leq q \leq \infty$, we have
\begin{equation*}
\| R_1^\pm f_0 \|_{L^q(r\mathrm{d}r)}
\lesssim \sum_{m \geq 0} 2^{-m(\frac{1}{2} + \frac{1}{q} - \frac{1}{p})} N(E,2^{-m})^{\frac 1q} \| f_0 \|_{L^p(s \mathrm{d}s)}.
\end{equation*}
\end{proposition}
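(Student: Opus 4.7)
The plan is to reduce to a dyadic family of averaging operators and exploit the covering of $E$ by $N(E, 2^{-m})$ intervals of length $2^{-m}$. I focus on $R_1^-$; the case of $R_1^+$ is analogous via the symmetric substitution $u = r + t - s$.

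First I would substitute $u = s - r + t$ to rewrite
\[ R_1^- f_0(r) = \sup_{\substack{t \in E \\ t \le r/2}} t^{-1/2} \Big| \int_0^t u^{-1/2} f_0(r - t + u) \, du \Big|. \]
Since $t \in [1, 2]$ we have $t^{-1/2} \sim 1$, and $t \le r/2$ ensures $s = r - t + u \sim r$ throughout the range of integration. Decomposing the singular weight $u^{-1/2}$ dyadically, set $A_m := [2^{-m-1}, 2^{-m}]$ for $m \ge 0$ (grouping $u \in [1/2, 2]$ into the $m = 0$ piece), so $u^{-1/2} \sim 2^{m/2}$ on $A_m$. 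This yields the pointwise bound
\[ R_1^- f_0(r) \lesssim \sum_{m \ge 0} T_m^* f_0(r), \qquad T_m^* f_0(r) := \sup_{t \in E} 2^{m/2} \int_{A_m} |f_0(r - t + u)| \, du. \]

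The key step is to estimate $\| T_m^* f_0 \|_{L^q(r \, dr)}$. Fix a $2^{-m}$-net $\Sigma_m \subset E$ with $|\Sigma_m| \lesssim N(E, 2^{-m})$, and for each $t \in E$ pick $t_m \in \Sigma_m$ with $|t - t_m| \le 2^{-m}$. Since $|A_m| \sim 2^{-m}$, the set $\{r - t + u : u \in A_m\}$ is contained in $r - t_m + \tilde A_m$ for an enlarged interval $\tilde A_m$ of length $\sim 2^{-m}$. Defining
\[ U_{t, m} f_0(r) := 2^{m/2} \int_{\tilde A_m} |f_0(r - t + u)| \, du, \]
we obtain $T_m^* f_0(r) \lesssim \sup_{t \in \Sigma_m} U_{t, m} f_0(r)$, and the trivial $\ell^q$-bound yields
\[ \| T_m^* f_0 \|_{L^q(r \, dr)}^q \lesssim \sum_{t \in \Sigma_m} \| U_{t, m} f_0 \|_{L^q(r \, dr)}^q. \]

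It remains to show $\| U_{t, m} f_0 \|_{L^q(r \, dr)} \lesssim 2^{-m(1/2 + 1/q - 1/p)} \| f_0 \|_{L^p(s \, ds)}$ uniformly in $t$. Writing $\tilde f(s) := f_0(s) s^{1/p}$ and using $s \sim r$ gives $U_{t, m} f_0(r) \lesssim r^{-1/p} V_{t, m} \tilde f(r)$, where $V_{t, m}$ is a translate of convolution with $2^{m/2} \bbone_{-\tilde A_m}$. Since $q \ge p$ and $r \ge 2$, the weight $r^{1 - q/p}$ arising after raising to the $q$-th power is uniformly bounded. Matters then reduce to Young's inequality on Lebesgue measure $dr$: with $\tfrac{1}{\rho} = 1 + \tfrac{1}{q} - \tfrac{1}{p}$ and $\|\bbone_{\tilde A_m}\|_{L^\rho(dr)} \sim 2^{-m/\rho}$, one obtains the factor $2^{m/2} \cdot 2^{-m/\rho} = 2^{-m(1/2 + 1/q - 1/p)}$. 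Summing over $m \ge 0$ completes the proof. The main technical obstacle is the bookkeeping required to switch between the measures $r \, dr$ and $s \, ds$ under the convolution structure, which is resolved by the comparability $s \sim r$ ensured by $t \le r/2$.
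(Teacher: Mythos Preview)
Your proof is correct and follows essentially the same strategy as the paper's: a dyadic decomposition of the singularity $(s-r+t)^{-1/2}$, discretization of the supremum over $t\in E$ via a covering by $N(E,2^{-m})$ intervals of length $2^{-m}$, the trivial $\ell^q$ bound on the supremum, and Young's convolution inequality. The only cosmetic differences are that the paper separates off a local Hardy--Littlewood term for the piece $u\sim 1$ (which you absorb into the $m=0$ term) and handles the passage between the measures $r\,dr$ and $s\,ds$ by localizing to dyadic $r$-blocks $[2^k,2^{k+1}]$ rather than via your substitution $\tilde f(s)=f_0(s)s^{1/p}$; both devices exploit the same fact that $r^{1-q/p}\lesssim 1$ for $r\ge 2$ and $q\ge p$. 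One small point: you drop the constraint $t\le r/2$ in defining $T_m^*$ but later invoke $s\sim r$, which relied on that constraint---this is harmless since you can simply apply the comparability $s\sim r$ \emph{before} enlarging the supremum (or, as the paper does, reduce at the outset to $f_0$ supported on $[1,\infty)$).
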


\begin{proof} We only give the estimate for $R_1:=R_1^-$; the same arguments apply to $R_1^+$.
First, note that without loss of generality, we can assume that $f_0$ is supported on $[1,\infty)$. For each $r \geq 2$ we can write, after a dyadic decomposition,
\begin{equation}\label{eq:R_1 dec}
R_1 f_0 (r) \lesssim M_{HL}^{\mathrm{loc}} f_0(r) + \sum_{m > 0} 2^{m/2} R_1^{m} f_0(s)
\end{equation}
where
\begin{equation*}
R_1^{m} f_0(r):=\sup_{\substack{t \in E \\ t \leq r/2}} \int_{r-t + 2^{-m}}^{r-t+2^{-m+1}} |f_0(s)| {{\text{\,\rm d}}} s
\end{equation*}
and $M_{HL}^{\mathrm{loc}}h(r):=\sup_{1\leq t \leq 2} \frac{1}{t}\int_{r-t}^r |h(s)|{{\text{\,\rm d}}} s$. Clearly \[ \|{{\mathbbm 1}}_{(2,\infty)}M_{HL}^\mathrm{loc} f_0\|_{L^q(r\mathrm{d}r)} \lesssim \|f_0\|_{L^p(r\mathrm{d}r)} \] for all $1 < p \leq q \leq \infty$. For each fixed $m>0$, let
$\{I_m^\nu: \nu \in {{\mathfrak{N}}}_m(E)\}$
be a minimal cover of $E$ by intervals of length $2^{-m}$,
indexed by the set ${{\mathfrak{N}}}_m(E)$ with
$\#{{\mathfrak{N}}}_m(E)=N(E,2^{-m})$. For each $I_m^\nu$, define $J_m^\nu:= \{t \in {\mathbb R} : {{\text{\rm dist}}}(t, I_m^\nu) \leq 2^{-m+1}\}$, that is, its concentric interval with 5 times the length. Then
\begin{equation*}
R_1^{m} f_0(r) \lesssim \sup_{\nu \in {{\mathfrak{N}}}_m(E)} \int_{J_m^\nu} |f_0(r-s)| {{\text{\,\rm d}}} s = \sup_{\nu \in {{\mathfrak{N}}}_m(E)} \, |f_0 {{\mathbbm 1}}_{A_k}| \ast {{\mathbbm 1}}_{J_m^\nu}(r)
\end{equation*}
whenever $r \in [2^k,2^{k+1}]$ for $k>0$, where $A_k:=[\max\{1, 2^{k}-3\}, 2^{k+1}]$.
Consequently, by Young's convolution inequality,
\begin{align*}
\int_{2}^\infty &|R_1^{m} f_0(r)|^q r {{\text{\,\rm d}}} r \lesssim \sum_{\nu \in {{\mathfrak{N}}}_m(E)} \sum_{k > 0} 2^k \int_{2^k}^{2^{k+1}} \big| |f_0 {{\mathbbm 1}}_{A_k}| \ast {{\mathbbm 1}}_{J_m^\nu} (r)\big|^q {{\text{\,\rm d}}} r \\
& \lesssim \sum_{\nu \in {{\mathfrak{N}}}_m(E)} \sum_{k \geq 0} 2^k |I_m^\nu|^{q + 1 - \frac{q}{p}} \Big( \int_{A_k} |f_0(s)|^p {{\text{\,\rm d}}} s \Big)^{q/p} \\
& \lesssim 2^{-m(q +1 - \frac{q}{p})} N(E,2^{-m}) \sup_{k \geq 0} 2^{k(1 - \frac{q}{p})} \sum_{k \geq 0} \Big( \int_{2^{k-2}}^{2^{k+1}} |f_0(s)|^p s {{\text{\,\rm d}}} s \Big)^{q/p} \\
& \lesssim 2^{-m(q +1 - \frac{q}{p})} N(E,2^{-m}) \| f_0 \|_{L^p(s\mathrm{d}s)}^q,
\end{align*}
where the last inequality follows since $p \leq q$. Combining this with \eqref{eq:R_1 dec} concludes the proof.
\end{proof}

\subsection{The operators \texorpdfstring{$R_2^\pm$}{R2}}
We first record a result from \cite{SeegerWaingerWright1997} for the case $p=q$.
\begin{lemma}\label{lem:R2-p=q}
For all $1\le p< \infty$, we have
\[\|R_2^\pm f\|_{L^p(r\mathrm{d} r)}\le \sum_{m\ge 0} N(E, 2^{-m})^{1/p} 2^{-m/2} \|f\|_{L^p(s\mathrm{d} s)}.\]
\end{lemma}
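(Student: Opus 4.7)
The plan is to adapt the dyadic-decomposition argument used to handle $R_1^\pm$ in Proposition \ref{prop:R1R2} to the operators $R_2^\pm$. I will focus on $R_2^-$; the operator $R_2^+$ is treated in a parallel way, the only difference being that the singularity of the kernel sits at the right endpoint $s=t+r$ rather than at the left endpoint $s=t-r$. The first step is to decompose the singular kernel $|s-t+r|^{-1/2}$ dyadically in the quantity $s-(t-r)$, leading to $R_2^- f_0 \lesssim \sum_{m\ge 0} 2^{m/2} R_2^{m,-}f_0$, where
\[ R_2^{m,-}f_0(r):= r^{-1/2} \bbone_{\{2^{-m-1}\le r\le 4/3\}} \sup_{\substack{t\in E\\ t\ge 3r/2}} \int_{t-r+2^{-m-1}}^{t-r+2^{-m}} |f_0(s)|\ud s. \]
The cutoff $r\ge 2^{-m-1}$ reflects that the $m$-th dyadic interval must intersect $[t-r,t]$, while $r\le 4/3$ is forced by $t\in[1,2]$ and $t\ge 3r/2$. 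The task reduces to proving $\|R_2^{m,-}f_0\|_{L^p(r\ud r)} \lesssim 2^{-m} N(E,2^{-m})^{1/p} \|f_0\|_{L^p(s\ud s)}$ for each $m$.

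Next, I would cover $E$ by a minimal family $\{I_m^\nu: \nu\in \fN_m(E)\}$ of intervals of length $2^{-m}$ with centers $c_\nu$, so that $\#\fN_m(E)=N(E,2^{-m})$. For $t\in E\cap I_m^\nu$ the dyadic integration interval $[t-r+2^{-m-1},\, t-r+2^{-m}]$ is contained in the enlarged interval $J_m^\nu(r):=[c_\nu-r,\, c_\nu-r+2^{-m+1}]$, of length $O(2^{-m})$. Bounding $\sup_{t\in E}$ by the $\ell^p$-sum over $\nu$, then applying Hölder's inequality to the inner integral of $|f_0|$ over $J_m^\nu(r)$, and swapping the order of $r$- and $s$-integration yields
\[ \|R_2^{m,-}f_0\|_{L^p(r\ud r)}^p \lesssim 2^{-m(p-1)} \sum_\nu \int |f_0(s)|^p \int_{B_\nu(s)} r^{1-p/2}\ud r\, \ud s, \]
where $B_\nu(s):=\{r\in[2^{-m-1},4/3]: s\in J_m^\nu(r)\}$ is an interval of length $\lesssim 2^{-m}$ on which $r\asymp \max(c_\nu-s,\, 2^{-m-1})$.

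The main obstacle will be controlling the inner $r$-integral together with the sum over $\nu$ in the range $p>2$, since the weight $r^{1-p/2}$ is then singular near $r=0$. For $p\le 2$ each factor $\max(c_\nu-s,\, 2^{-m-1})^{1-p/2}$ is bounded by a constant and the $\nu$-sum contributes at most $N(E,2^{-m})$, as needed. For $p>2$ I would classify the indices $\nu$ dyadically according to the size of $c_\nu-s$, using that at most $O(1)$ of the centers can lie within distance $2^{-m-1}$ of a given $s$ and that the $c_\nu$ are $2^{-m}$-separated, to sum the resulting series to $\lesssim 2^{mp/2}\,N(E,2^{-m})$. Finally, since the integrations defining $R_2^\pm$ involve only values of $s$ in a fixed interval bounded away from the origin (indeed $s\in[t-r,t]\subset[1/3,2]$ for $R_2^-$), one has $\|f_0\|_{L^p(\ud s)}\lesssim \|f_0\|_{L^p(s\ud s)}$, and assembling the estimates gives $\|2^{m/2}R_2^{m,-}f_0\|_{L^p(r\ud r)}\lesssim 2^{-m/2}N(E,2^{-m})^{1/p}\|f_0\|_{L^p(s\ud s)}$. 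Summing over $m\ge 0$ yields the lemma.
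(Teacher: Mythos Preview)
Your argument is correct for $p\le 2$, but the per-piece bound you aim to prove,
\[\|R_2^{m,-}f_0\|_{L^p(r\,\ud r)} \lesssim 2^{-m} N(E,2^{-m})^{1/p} \|f_0\|_{L^p(s\,\ud s)},\]
is \emph{false} for $p>2$. Take $E=\{1\}$ (so $N(E,2^{-m})=1$) and $f_0=\bbone_{[1-2^{-m},1]}$. A direct computation gives $R_2^{m,-}f_0(r)=r^{-1/2}(r-2^{-m-1})$ on $[2^{-m-1},2^{-m}]$, hence
\[\|R_2^{m,-}f_0\|_{L^p(r\,\ud r)}\gtrsim 2^{-m(\frac12+\frac2p)},\qquad \|f_0\|_{L^p(s\,\ud s)}\approx 2^{-m/p},\]
so the ratio is $\gtrsim 2^{-m(\frac12+\frac1p)}$, which exceeds the target $2^{-m}$ whenever $p>2$. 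The culprit is exactly the singular weight $r^{1-p/2}$ you flagged: your dyadic classification of the centers cannot compensate for the factor $(2^{-m})^{1-p/2}$ coming from the $O(1)$ indices with $c_\nu-s\approx 2^{-m}$, and the claimed bound ``$\lesssim 2^{mp/2} N(E,2^{-m})$'' does not recover the final estimate you assert.

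The fix for $p>2$ is much simpler than your route and requires no dyadic decomposition: since then $p'<2$, H\"older's inequality gives
\[\Big|\int_{t-r}^t (s-t+r)^{-1/2} f_0(s)\,\ud s\Big|\le \Big(\int_0^r u^{-p'/2}\,\ud u\Big)^{1/p'}\|f_0\bbone_{[1/3,2]}\|_p \lesssim r^{\frac1{p'}-\frac12}\|f_0\|_p,\]
so $R_2^- f_0(r)\lesssim r^{-1/p}\|f_0\|_{L^p(s\,\ud s)}$ on $(0,4/3]$, and $\|R_2^- f_0\|_{L^p(r\,\ud r)}\lesssim \|f_0\|_{L^p(s\,\ud s)}$ uniformly in $E$. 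This is stronger than the lemma's bound, since $\sum_{m\ge 0} N(E,2^{-m})^{1/p}2^{-m/2}\ge \sum_{m\ge 0}2^{-m/2}\gtrsim 1$. Thus: keep your argument for $1\le p\le 2$, and replace the $p>2$ part by this one-line H\"older estimate. (The paper itself does not give a proof here, referring instead to \cite[Prop.~5.3]{SeegerWaingerWright1997}.)
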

For the proof see \cite[Prop. 5.3]{SeegerWaingerWright1997}. The $L^p(s\mathrm{d} s)\to L^q(r\mathrm{d} r)$ estimates for $R_2^\pm$
are more involved, and can be obtained under a condition on the quantity $\nu^\sharp(\alpha)$ in \eqref{eq:nudagger}. We note that the behaviour of $R_2^\pm$ is responsible for the different outcomes in two versus higher dimensions.

\begin{proposition}\label{prop:R3-better}
Let $1 < p \leq q <\infty$ and $E \subset [1,2]$. For $k,m \geq 0$, define
\begin{equation}\label{eq:omega def}
\omega_m^{p,q}(E,k):= \sup_{|J|=2^{-k}}2^{-k(\frac{2}{q}-\frac{1}{p})} N(E \cap J, 2^{-m-k})^{\frac{1}{q}}
\end{equation}
where the supremum is taken over all intervals $J \subset [1,2]$ of length $2^{-k}$.
\begin{enumerate}[(i)]
\item For $2 \leq q \leq 2p$, we have
\begin{equation}\label{eq:R3 better 2}
\| R_2^\pm f_0 \|_{L^q(r\mathrm{d}r)} \lesssim \sum_{m \geq 0} 2^{-m(\frac{1}{2} + \frac{1}{q} - \frac{1}{p})} \| \omega_m^{p,q}(E,k)\|_{\ell^\infty_k} \| f_0 \|_{L^p(s \mathrm{d}s)}.
\end{equation}
\item If $1 < q < 2$, we have
\begin{equation}\label{eq:R3 better 1}
\| R_2^\pm f_0 \|_{L^q(r\mathrm{d}r)} \lesssim
\sum_{m \geq 0} 2^{-m(\frac{1}{2} + \frac{1}{q} - \frac{1}{p})} \|\omega_m^{p,q}(E,k) \|_{\ell^{\frac{2q}{2-q}}_k} \| f_0 \|_{L^p(s \mathrm{d}s)}.
\end{equation}
\end{enumerate}
\end{proposition}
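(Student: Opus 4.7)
The proof combines a dyadic decomposition of the singular kernel $|s-(t-r)|^{-1/2}$ in $R_2^\pm$ with a two-scale covering of $E$. I would only treat $R_2^-$; the argument for $R_2^+$ is entirely analogous.

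Starting from the formula in Lemma \ref{lem:2d reduction}, I would decompose the kernel dyadically at scale $2^{-m}$, $m\ge 0$, yielding the pointwise bound $R_2^- f_0(r) \lesssim r^{-1/2}\sum_{m\ge 0} 2^{m/2}\,\cI_m(r)$, where $\cI_m(r) = \sup_{t\in E,\,t\ge 3r/2} \int_{t-r+2^{-m-1}}^{t-r+2^{-m}}|f_0(s)|\,\ud s$. For each pair $(m,k)$ with $m,k\ge 0$, I would introduce a two-scale covering of $E$: intervals $\{J^\mu_k\}$ of length $2^{-k}$, and within each $J^\mu_k$ a cover $\{I^{\mu,\nu}_{m,k}\}$ of $E\cap J^\mu_k$ by intervals of length $2^{-m-k}$. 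Since $2^{-m-k}\le 2^{-m}$, as $t$ varies over a single $I^{\mu,\nu}_{m,k}$ the $s$-integration interval shifts by at most $2^{-m-k}$, so $\cI_m(r) \le \sup_{\mu,\nu} H^{\mu,\nu}_{m,k}(r)$, where $H^{\mu,\nu}_{m,k}(r)$ is the integral of $|f_0|$ over a fixed-in-$t$ interval of length $\sim 2^{-m}$ depending linearly on $r$. Young's convolution inequality yields $\|H^{\mu,\nu}_{m,k}\|_{L^q(\ud r)} \lesssim 2^{-m(1+1/q-1/p)}\|f_0\|_{L^p}$, and applying the embedding $\sup_{\mu,\nu} \le \ell^q_{\mu,\nu}$ produces
\[ \|\cI_m\|_{L^q(\ud r)}^q \lesssim \Big(\sum_\mu N(E\cap J^\mu_k,2^{-m-k})\Big)\, 2^{-mq(1+1/q-1/p)}\, \|f_0\|_{L^p}^q. \]
The sum in parentheses is then bounded via the definition of $\omega_m^{p,q}(E,k)$ together with the trivial estimate $N(E,2^{-k})\lesssim 2^k$ for the number of $\mu$.

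To assemble the estimate I would incorporate the factor $r^{-1/2}$ together with the weight $r\,\ud r$, producing an effective weight $r^{1-q/2}$. For case (i), with $q\ge 2$, this weight is unbounded near $r=0$, so I would further dyadically decompose $r\sim 2^{-k'}$. The algebraic structure of the factor $2^{-k(2/q-1/p)}$ in $\omega_m^{p,q}(E,k)$ is what allows the $k'$-dependent gains to cancel the blow-up of $r^{1-q/2}$ when one takes $k=k'$; after $\ell^q$-summation in $k'$ this yields the claimed $\ell^\infty_k$ bound. For case (ii), with $q<2$, the weight is bounded on the range of $r$ and the $r$-decomposition is not delicate; however, extracting the $\ell^{2q/(2-q)}_k$ norm requires a Minkowski-type argument in $k$, with the exponent $2q/(2-q)$ arising as the H\"older conjugate of $q/(2-q)$ adapted to the weight exponent $1-q/2$.

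The main obstacle is the precise bookkeeping of the three scales $r\sim 2^{-k'}$, the coarse covering scale $2^{-k}$, and the fine scale $2^{-m-k}$, and verifying that, after applying Young's inequality and summing, the exponents combine to produce exactly the $\ell^\infty_k$ or $\ell^{2q/(2-q)}_k$ norm claimed in each case. In particular, the derivation of the exponent $2q/(2-q)$ in case (ii) is the most delicate point and relies on a careful duality/Minkowski step in the $k$-variable tailored to the weight $r^{1-q/2}$.
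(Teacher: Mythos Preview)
Your approach has the right overall shape---dyadic decomposition of the kernel, a two-scale covering of $E$, Young's inequality---but there is a genuine gap in how you sum over the $r$-dyadic scales $k'$. The missing ingredient is a Littlewood--Paley (frequency) decomposition of $f_0$.

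Concretely: once you localize to $r\in I_{k'}$ and apply Young's inequality, the bound you obtain for each fixed $k'$ involves the full $\|f_0\|_p$ (even after summing over $\mu$). When you then take the $\ell^q$-sum over $k'$, there is no orthogonality mechanism preventing the $\|f_0\|_p$ terms from accumulating, and the sum does not close up to give the stated bound. The paper avoids this by first inserting a resolution of the identity $\delta = u_k + \sum_{m\ge 1} \upsilon_{k+m}*\psi_{k+m}$ (with $\psi_{k+m}$ having vanishing moments) \emph{before} applying Young; the resulting estimate for each $(k,m)$ then involves $\|\psi_{k+m}*f_0\|_p$ rather than $\|f_0\|_p$, and one has
\[
\Big(\sum_k \|\psi_{k+m}*f_0\|_p^2\Big)^{1/2}\lesssim\|f_0\|_p
\]
by Littlewood--Paley theory. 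This $\ell^2$ in $k$ is exactly what, via H\"older with $\ell^q$, produces $\ell^\infty_k$ for $q\ge 2$ and $\ell^{2q/(2-q)}_k$ for $q<2$. Your explanation that the exponent $2q/(2-q)$ arises as ``the H\"older conjugate of $q/(2-q)$ adapted to the weight exponent $1-q/2$'' is therefore not correct: the $2$ does not come from the weight $r^{1-q/2}$ but from the square function.

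There is a second, smaller gap: in your Young step you use the global $\|f_0\|_{L^p}$ for each $(\mu,\nu)$. The paper instead observes that for $r\in I_k$ and $t\in J_{k,\mu}$ the kernel only sees $f_0$ restricted to $J_{k,\mu}^*$, so Young yields $\|f_0\bbone_{J_{k,\mu}^*}\|_p$; summing $\sum_\mu \|f_0\bbone_{J_{k,\mu}^*}\|_p^q \le (\sum_\mu \|f_0\bbone_{J_{k,\mu}^*}\|_p^p)^{q/p}\lesssim \|f_0\|_p^q$ then avoids the loss of a factor $2^{k/q}$ that your $\sum_\mu 1 \sim 2^k$ incurs.
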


\begin{proof} We only give the estimate for $R_2:=R_2^{-}$; the same arguments apply to $R_2^+$. Note that $R_2 f_0(r)=0$ for $r \geq 4/3$. Furthermore, without loss of generality, we may assume that $f_0$ is supported on $[1/3,2]$. Let $I_0:=[1/2, 4/3]$ and, for $k> 0$, let $I_k:=[2^{-k-1}, 2^{-k}]$. Let $\chi_0\in C^\infty_c$ be even, such that $\chi_0(y)=1$ for $|y|\le 2^{-12}$ and $\chi_0(y)=0$ for $|y|\ge 2^{-11}.$ For $k \geq 0$, we define
\begin{align*}
{\mathscr{A}}_k f_0(r,t) &:= r^{-1/2}{{\mathbbm 1}}_{I_k}(r) \int_{t-r}^\infty \chi_0(2^{k} (t-r-s)) |t-r-s|^{-1/2} f_0(s) {{\text{\,\rm d}}} s
\\
{\mathscr{B}}_k f_0(r,t)&:= r^{-1/2}{{\mathbbm 1}}_{I_k}(r)\int_{t-r}^t (1-\chi_0(2^{k} (t-r-s))) |t-r-s|^{-1/2} f_0(s) {{\text{\,\rm d}}} s
\end{align*}
and note that
\begin{equation}\label{eq:R3 A B}
R_2 f_0(r) \leq \sup_{t\in E} \sum_{k\ge 0} |{\mathscr{A}}_{k} f_0(r,t)| \,+\,
\sup_{t\in E} \sum_{k\ge 0} |{\mathscr{B}}_{k} f_0(r,t) |.
\end{equation}
We start bounding the ${\mathscr{B}}_k$ terms. For $t\in E$ and $r\in I_k $ we have by Hölder's inequality
\begin{equation*}
|{\mathscr{B}}_k f_0(r,t) |{\lesssim} 2^k \Big(\int_{t-r}^t|f_0(s)|{{\text{\,\rm d}}} s \Big) {{\mathbbm 1}}_{I_k}(r) {\lesssim} 2^{k/p} \Big(\int_{1/3}^2|f_0(s)|^p {{\text{\,\rm d}}} s\Big)^{1/p} {{\mathbbm 1}}_{I_k}(r),
\end{equation*}
and thus
\begin{align*}
\sup_{1\le t\le 2} \sum_{k\ge 0} |{\mathscr{B}}_k f_0(r,t)| {\lesssim} r^{-2/q} \sup_{k \geq 0} 2^{-k(2/q-1/p)} \|f_0\|_p
{\lesssim} r^{-2/q} \|f_0\|_p
\end{align*}
for $q \leq 2p$.
Since $r^{-2/q}\in L^{q,\infty}(r \mathrm{d}r)$ we have
$L^p(s \mathrm{d}s)\to L^{q,\infty}(r \mathrm{d}r)$ bounds for $q \leq 2p$. Therefore, by the Marcinkiewicz interpolation theorem, we obtain the strong-type bounds
\begin{equation}\label{eq:sumBkest} \Big\|\sup_{1\le t\le 2} \sum_{k\ge 0} |{\mathscr{B}}_k f_0(\cdot,t)|\Big\|_{L^q(r \mathrm{d}r)} {\lesssim} \|f_0\|_{L^p(s \mathrm{d}s)},
\end{equation}
for $1 < p < \infty$ and $q \leq 2p$.

We turn to the ${\mathscr{A}}_k$ terms. Let
\[ h_k(x)=\begin{cases} 0 &\text{ if } x>0,\\
\chi_0(2^k x)|x|^{-1/2} &\text{ if } x\le 0.
\end{cases}
\]
In particular, $h_k$ is supported on $(-2^{-11-k},0)$. Note that
\begin{equation*}
{\mathscr{A}}_k f_0(r,t)=r^{-1/2} {{\mathbbm 1}}_{I_k}(r) \,\, h_k*f_0(t-r).
\end{equation*}
We perform a further decomposition of the operator ${\mathscr{A}}_k$ which will quantify the size of $|t-r-s|^{-1/2}$. To this end, we use, for $k \geq 0$, the resolutions of the identity
\begin{equation}\label{eq:deltaresmain}
\delta= u_k + \sum_{m=1}^\infty \upsilon_{k+m} * \psi_{k+m}
\end{equation}
where
$u_k=2^ku(2^k\cdot)$, $\upsilon_{\ell}=2^{\ell-1} \upsilon_{1}(2^{\ell-1}\cdot)$, $\psi_{\ell}=2^{\ell-1}\psi_1(2^{\ell-1}\cdot)$
and
$u, \upsilon_1, \psi_1$
are $C_c^\infty$ functions supported on $(-2^{-10}, 2^{-10})$ and with $\upsilon_{1}$, $\psi_{1}$ satisfying moment conditions up to a certain fixed order $N>0$. See \cite[Lemma 2.1]{se-tao} for a proof.
The convergence in \eqref{eq:deltaresmain} is in the sense of tempered distributions.
This resolution of the identity is convenient since it will allow for an application of Littlewood--Paley theory to sum in the $k$-variable. With this at our disposal, define
\begin{align*}
{\mathscr{A}}_{k,0} f_0(r,t)&:=r^{-1/2}{{\mathbbm 1}}_{I_k}(r) h_k*u_k*f_0(t-r),
\\
{\mathscr{A}}_{k,m} f_0(r,t)&:= r^{-1/2} {{\mathbbm 1}}_{I_k}(r) h_k*\upsilon_{k+m}*\psi_{k+m} *f_0(t-r), \qquad m\ge 1.
\end{align*}
In view of \eqref{eq:deltaresmain}
we have ${\mathscr{A}}_k=\sum_{m=0}^\infty {\mathscr{A}}_{k,m} $ for all $k\ge 0$.

Consider first the ${\mathscr{A}}_{k,0}$. Note that $h_k*u_k$ is supported in $[-2^{-k-10},2^{-k-20}]$ and $\|h_k*u_k\|_\infty \leq \| h_k \|_1 \| u_k\|_\infty {\lesssim} 2^{k/2}$. Thus, one can argue exactly as for the ${\mathscr{B}}_k$ terms and deduce the estimate
\begin{equation}\label{eq:sumA0kest}
\Big\| \sup_{1\le t\le 2} \sum_{k\ge 0} |{\mathscr{A}}_{k,0} f_0(\cdot,t)|\Big\|_{L^q(r \mathrm{d}r)} {\lesssim} \|f_0\|_{L^p(s \mathrm{d}s)}
\end{equation}
for $1<p<\infty$ and $q\le 2p$.

We turn to the operators ${\mathscr{A}}_{k,m}$ for $m \geq 1$, $k \geq0$. Note that by \eqref{eq:R3 A B}, \eqref{eq:sumBkest} and \eqref{eq:sumA0kest}, to conclude the proof of the proposition it suffices to show that the operator $f_0 \mapsto \sup_{t \in E} \sum_{k \geq 0} |\sum_{m \geq 1} {\mathscr{A}}_{k,m} f_0(\cdot, t)|$ satisfies the bounds \eqref{eq:R3 better 2} and \eqref{eq:R3 better 1}. The key estimate for this end is
\Be \label{eq:fixedkAm}
\| \sup_{t \in E} |{\mathscr{A}}_{k,m} f_0(\cdot, t)| \|_{L^q(r \mathrm{d}r)}
{\lesssim} 2^{-m(\frac 12+\frac 1q-\frac 1p)}
\omega_{m}^{p,q}(E, k)\|\psi_{k+m} \ast f_0\|_p
\Ee
for $1<p<\infty$ and $q\le 2p$.
Assuming \eqref{eq:fixedkAm}, one can conclude the proof using a Littlewood--Paley inequality for the family of functions $\{\psi_{k+m}\}_{k\ge 0}$, for any fixed $m\ge 1$.
Because of the disjointness of the intervals $I_k$ and \eqref{eq:fixedkAm}
we have
\begin{align}\notag
&\Big\|\sup_{t\in E} \sum_{k\ge 0}\big| \sum_{m \geq 1} {\mathscr{A}}_{m, k} f_0(\cdot,t) \big|\Big\|_{L^q(r\mathrm{d}r)} {\lesssim} \sum_{m \geq 1} \Big(\sum_{k\ge 0} \big\| \sup_{t\in E} | {\mathscr{A}}_{m,k} f_0(\cdot,t)| \big\|_{L^q(r\mathrm{d}r)}^q\Big)^{\frac 1q}&
\\& \qquad \qquad \label{eq: disjointness}
{\lesssim}
\sum_{m \geq 1} 2^{-m(\frac 12+\frac 1q-\frac 1p)} \Big(\sum_{k\ge 0}
\omega_{m}^{p,q}(E,k)^q \|\psi_{k+m} \ast f_0\|_p^q
\Big)^{\frac 1q}.
\end{align}
In the case $q\ge 2$ we use
\[ \Big(\sum_{k\ge 0}
\omega_{m}^{p,q}(E,k)^q \|\psi_{k+m} \ast f_0\|_p^q
\Big)^{\frac 1q} {\lesssim}
\sup_{k\ge 0} \omega_{m}^{p,q}(E,k) \Big(\sum_{k\ge 0}
\|\psi_{k+m} \ast f_0\|_p^q
\Big)^{\frac 1q},
\] and then by Minkowski's inequality and Littlewood--Paley theory
\begin{align*}
\notag \Big(\sum_{k} &\|\psi_{k+m} *f_0\|_p^q\Big)^{1/q} {\lesssim} \Big\| \Big(\sum_k|\psi_{k+m} *f_0 |^q\Big)^{1/q} \Big\|_p \\
&{\lesssim} \Big\| \Big(\sum_k|\psi_{k+m} *f_0 |^2\Big)^{1/2} \Big\|_p {\lesssim}
\|f_0\|_{p} \sim \| f_0 \|_{L^p(s \mathrm{d}s)}.
\end{align*}
This together with \eqref{eq: disjointness} implies part (i) of the proposition.

For $q\le 2$ we use H\"older's inequality to get, for $\omega(k):=\omega_{m}^{p,q}(E,k)$,
\[ \Big(\sum_{k\ge 0}
\omega(k)^q \|\psi_{k+m} \ast f_0\|_p^q
\Big)^{\frac 1q} {\lesssim} \Big(\sum_{k\ge 0} \omega(k)^{\frac{2q}{2-q}}\Big)^{\frac 1q-\frac 12} \Big(\sum_{k\ge 0}
\|\psi_{k+m} \ast f_0\|_p^2
\Big)^{\frac 12}.\]
By Minkowski's inequality and Littlewood--Paley theory
\[ \Big(\sum_{k} \|\psi_{k+m} *f_0\|_p^2\Big)^{1/2} {\lesssim} \Big\| \Big(\sum_k|\psi_{k+m} *f_0 |^2\Big)^{1/2} \Big\|_p
{\lesssim}
\|f_0\|_{p} \sim \| f_0 \|_{L^p(s \mathrm{d}s)}
\]
and we see that this together with \eqref{eq: disjointness} also implies part (ii) of the proposition.

It remains to prove \eqref{eq:fixedkAm} for fixed $m\ge 1$, $k \geq 0$. As mentioned above, the $m$-decomposition allows to essentially quantify the magnitude of $|t-r-s|$; in practice, one shall think of $h_k \ast \upsilon_{k+m}$ as being roughly $2^{(k+m)/2}{{\mathbbm 1}}_{[-2^{-m-k}, 2^{-m-k}]}$. More precisely, using the vanishing moment conditions of $\upsilon_{k+m}$ one obtains for $x\in {{\mathbb{R}}}$,
\begin{equation}\label{eq:smofhk}
|h_k*\upsilon_{k+m} (x) |
{\lesssim} 2^{(k+m)/2} (1+2^{k+m}|x|)^{-N}
\end{equation}
for all $N \geq 0$. For $|x|\leq 2^{-k-m}$, this follows from pulling out $\| \upsilon_{k+m} \|_\infty$ and direct integration. For $|x| \geq 2^{-k-m}$, the case $N=0$ is immediate from the support of $\upsilon_{k+m}$ and $\| \upsilon_{k+m} \|_1 \lesssim 1$, while for $N>0$, we write
\begin{equation*}
h_k \ast \upsilon_{k+m}(x)= \int \Big(h_k(x-y) - \sum_{j=0}^{N-1} h_k^{(j)}(x) \frac{(-y)^j}{j!} \Big) \upsilon_{k+m} (y) {{\text{\,\rm d}}} y.
\end{equation*}
Using Taylor's theorem and $|x-y|\sim |x|$ for $|x| \geq 2^{-k-m}$, one immediately obtains \eqref{eq:smofhk}.
By Young's convolution inequality \eqref{eq:smofhk} implies
\begin{equation}\label{eq:hkm-p-q}
\|h_k*\upsilon_{k+m} *g\|_q{\lesssim} 2^{(k+m) (\frac 1p-\frac 1q -\frac 12)} \|g\|_p
\end{equation}
for any function $g \in L^p$, where $1 \leq p \leq q \leq \infty$. This upgrades to a maximal estimate at intervals at scale $2^{-k-m}$. More precisely, for any interval $I \subset [1,2]$ of length $|I|=2^{-k-m}$ we have
\begin{equation}\label{eq:hkmmax-p-q}
\|\sup_{t \in I} |h_k*\upsilon_{k+m} *g(t-\cdot) |\|_q{\lesssim} 2^{(k+m) (\frac 1p-\frac 1q-\frac 12)} \|g\|_p.
\end{equation}
Indeed, by the vanishing moment conditions for $\upsilon_{k+m}'$, the inequality \eqref{eq:smofhk}, and consequently the inequality \eqref{eq:hkm-p-q}, also hold if we replace the function
$h_k*\upsilon_{k+m}$ with $2^{-k-m} h_k*\upsilon_{k+m}'$. One can then obtain \eqref{eq:hkmmax-p-q} from these by a standard Sobolev-type application of the fundamental theorem of calculus (see, for instance, \cite[Chapter XI, \S 3, Lemma 1]{Stein1993}).

We are now in a position to prove \eqref{eq:fixedkAm}. For each fixed $k \geq 0$, we tile $[1,2]$ into intervals of length $2^{-k}$ denoted by $J_{k,\mu}:=[\mu 2^{-k}, (\mu+1)2^{-k}]$ for each integer $2^k \leq \mu < 2^{k+1}$, and set $J_{k,\mu}^*:=[(\mu-2) 2^{-k}, (\mu+3)2^{-k}]$ to be the concentric interval to $J_{k,\mu}$ with 5 times its length. Since $h_k \ast \upsilon_{k+m}$ is supported on $(-2^{-k}, 2^{-k})$, we have that for $r \in I_k$ and $t \in J_{k,\mu}$,
\begin{equation}\label{eq:hkm support}
h_k \ast \upsilon_{k+m} \ast g (t-r) = h_k \ast \upsilon_{k+m} \ast [g{{\mathbbm 1}}_{J_{k,\mu}^*}] (t-r).
\end{equation}
For each fixed $J_{k,\mu}$ and $m \geq 1$, let $\{I_{k+m,\mu,\nu}\}_{\nu}$ be a minimal cover of $E \cap J_{k,\mu}$ by intervals of length $2^{-k-m}$, and note that there are $N(E \cap J_{k,\mu}, 2^{-k-m})$ of them.
Then we have, by \eqref{eq:hkmmax-p-q} and \eqref{eq:hkm support}, for $g:=\psi_{k+m} \ast f_0$,
\begin{align*}
&\| \sup_{t \in E} |{\mathscr{A}}_{k,m} f_0 (\cdot, t)| \|_{L^q(r \mathrm{d}r)} \\
& \qquad \lesssim 2^{k(\frac 12-\frac 1q)}
\Big( \sum_{\mu} \sum_\nu \int \sup_{t\in I_{k+m,\mu,\nu} }| h_k*\upsilon_{k+m} * [g{{\mathbbm 1}}_{ J_{k,\mu}^*}] (t-r) |^q {{\text{\,\rm d}}} r \Big)^{1/q} \\
& \qquad {\lesssim} 2^{k(\frac 12-\frac 1q)}
\Big( \sum_{\mu} N(E\cap J_{k,\mu}, 2^{-k-m}) 2^{(k+m)(\frac 1p-\frac 1q-\frac 12)q } \|g{{\mathbbm 1}}_{J_{k,\mu}^*}\|_p^q \Big)^{1/q}
\\& \qquad {\lesssim} 2^{k(\frac 1p-\frac 2q)} 2^{-m(\frac 12+\frac 1q-\frac 1p) } \sup_{|J|=2^{-k} }N(E\cap J, 2^{-k-m} ) ^{\frac 1q}
\Big(\sum_\mu \|g{{\mathbbm 1}}_{J_{k,\mu}^*}\|_p^p\Big)^{1/p}
\end{align*}
for $p \leq q$, and since the $J^*_{k,\mu}$ have bounded overlap, this yields the asserted inequality \eqref{eq:fixedkAm} in view of the definition of $\omega_{m}^{p,q}(E,k)$ in \eqref{eq:omega def}.
\end{proof}

\subsection{The operators \texorpdfstring{${{\mathfrak{M}}}_p^\pm$}{Mp}} The treatment shares similarities with \cite[Proposition 5.4]{SeegerWaingerWright1997}.
We break $\mathfrak{M}_p^\pm g \lesssim \mathfrak{M}_{p,0}^\pm g+ \mathfrak{M}_{p, \infty}^\pm g$, where
\begin{align*}
{{\mathfrak{M}}}_{p,0}^- g(r)&:=\sup_{\substack{ t\in E\\r/2<t<3r/2}}
r^{-1}\int_{|r-t|}^{2|r-t|}s^{ \frac 12- \frac 1p}(s-|r-t|)^{-\frac 12} |g(s)|{{\text{\,\rm d}}} s, \\
{{\mathfrak{M}}}_{p,\infty}^- g(r)&:=\sup_{\substack{ t\in E\\r/2<t<3r/2}}
r^{-1}\int_{2|r-t|}^{r+t}s^{ - 1/p} |g(s)|{{\text{\,\rm d}}} s,
\end{align*}
and with analogous definitions for ${{\mathfrak{M}}}_{p,0}^+$ and ${{\mathfrak{M}}}_{p,\infty}^+$ breaking the $s$-domain in the regions $(\frac{r+t}{2}, r+t)$ and $(|r-t|, \frac{r+t}{2})$, respectively.
The operators ${{\mathfrak{M}}}_{p,\infty}^\pm$ are pointwise bounded by the two-dimensional version of ${{\mathfrak{M}}}_p$ in \eqref{eq:Mp def}, so one can appeal to the bounds in \S\ref{sec:higher}. The main focus of this subsection is to study the operators ${{\mathfrak{M}}}_{p,0}^\pm$.

\begin{proposition}\label{prop:Mp-d=2}
Let $E \subset [1,2]$ and $1 \leq p\le q<\infty$.

\begin{enumerate}[(i)]
\item For $p>2$,
\begin{equation*}
\| {{\mathfrak{M}}}_{p}^\pm g \|_{L^q(r \mathrm{d}r)} \lesssim \|g \|_p.
\end{equation*}

\item For $p<2$,
\begin{equation}\label{eq:M0-p-q}
\| {{\mathfrak{M}}}_{p}^\pm g \|_{L^q(r \mathrm{d}r)}\lesssim_p \sup_{0 < \delta < 1} N(E, \delta )^{\frac 1q} \delta^{1-\frac 2p+\frac 1q}
\| g \|_{p}.
\end{equation}

\item For $p=2$,
\begin{equation}\label{eq:M0-2-q}
\| {{\mathfrak{M}}}_{p}^\pm g \|_{L^q(r \mathrm{d}r)} \lesssim \sum_{\ell\ge 0} (1+\ell) \sup_{n\ge \ell} N(E,2^{-n})^{1/q} 2^{-n/q}
\| g \|_{2}.
\end{equation}
\end{enumerate}
\end{proposition}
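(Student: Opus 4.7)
The plan is to split $\fM_p^\pm g \lesssim \fM_{p,0}^\pm g + \fM_{p,\infty}^\pm g$ and bound the two pieces separately. The tail $\fM_{p,\infty}^\pm$ is the less singular one: since $s-|r-t|\gtrsim s$ on its integration range, we have $s^{1/2-1/p}(s-|r-t|)^{-1/2}\lesssim s^{-1/p}$, and hence $\fM_{p,\infty}^\pm g(r)\lesssim \fM_p g(r)$ where $\fM_p$ is the $d=2$ version of the operator in Lemma~\ref{lem:decomposition d3}. The required $L^p(s\,\ud s)\to L^q(r\,\ud r)$ estimates then follow by applying Propositions~\ref{lem:trivial holder}, \ref{lem:rightlineendpt}, and \ref{lem:rightlineendpt=beta=1} in $d=2$ (with a minor refinement of the latter needed for (iii) to achieve the sharper form of \eqref{eq:M0-2-q}). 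The main work lies in $\fM_{p,0}^\pm$; I treat $\fM_{p,0}^-$ since $\fM_{p,0}^+$ is analogous. The substitution $s=|r-t|(1+w)$, $w\in[0,1]$, converts the inner integral into a kernel $K(w)=(1+w)^{1/2-1/p}w^{-1/2}$ applied to $g(|r-t|(1+w))$.

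For part (i), $p>2$, one has $p'<2$, so $K\in L^{p'}((0,1))$; H\"older's inequality gives the pointwise bound $\fM_{p,0}^-g(r)\lesssim r^{-1}|r-t|^{1-2/p}\|g\|_p\lesssim\|g\|_p$, and since $\fM_{p,0}^\pm g$ is supported on $r\in[2/3,4]$ every $L^q$ bound follows.

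For part (ii), $1\le p<2$, H\"older alone fails, and I would perform a two-scale dyadic decomposition. Let $D_n=\{r:\dist(r,E)\in[2^{-n},2^{-n+1}]\}$ and concentrate on the dominant contribution from $t\in E$ with $|r-t|\sim 2^{-n}$ (the worst case when $p<2$, since both $s^{1/2-1/p}$ and $(s-|r-t|)^{-1/2}$ are largest there). Splitting the inner integral by annuli $s-|r-t|\sim 2^{-n-k}$, $k\ge 0$, H\"older on each annulus yields the pointwise bound $r^{-1}2^{(2n+k)(1/p-1/2)}\bigl(\int_{I_{n,k}(r,t)}|g|^p\,\ud s\bigr)^{1/p}$ on an interval $I_{n,k}(r,t)$ of length $\sim 2^{-n-k}$. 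To control $\sup_{t\in E}$, I would discretize $E$ at scale $2^{-n-k}$ via a minimal cover $\{K_\nu\}$ of cardinality $N(E,2^{-n-k})$ and apply a Young-type convolution estimate per $\nu$, as in the proof of Proposition~\ref{prop:R1R2}. Exploiting the disjointness of the $D_n$ to sum in $n$ as an $\ell^q$ operation and summing in $k$ using the hypothesis $\sup_\delta N(E,\delta)^{1/q}\delta^{1-2/p+1/q}<\infty$ then delivers \eqref{eq:M0-p-q}.

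For part (iii), the endpoint $p=2$, the exponent $1/p-1/2$ vanishes, so the geometric summability in $k$ from (ii) is lost. I would instead group scales $n$ into dyadic blocks $\Omega_\ell=\bigcup_{2^{\ell-1}\le n<2^{\ell}}D_n$ in the spirit of Proposition~\ref{lem:rightlineendpt=beta=1}, apply the same $(n,k)$-decomposition within each block, and sum over $\ell$; the weight $(1+\ell)$ and the inner supremum $\sup_{n\ge\ell}N(E,2^{-n})^{1/q}2^{-n/q}$ arise naturally from this block summation. The main obstacle is part (ii): balancing the dyadic factors $2^{(2n+k)(1/p-1/2)}$, the covering counts $N(E,2^{-n-k})$, and the $L^q$-summability in both $n$ and $k$ at the critical exponent is delicate. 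Naive triangle inequalities in both scales lose the needed summability; the argument must carefully coordinate $\ell^q$-summation in $n$ (via disjointness of $D_n$), Young-type convolution estimates for the $\nu$-covering, and Minkowski in the $k$-sum, so that the final sum converges exactly at the critical exponent encoded by the hypothesis. The same balancing, together with careful bookkeeping of the logarithmic weight, is what produces (iii).
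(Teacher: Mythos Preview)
Your overall architecture matches the paper's: split into $\fM_{p,0}^\pm+\fM_{p,\infty}^\pm$, invoke the $d=2$ versions of Propositions~\ref{lem:trivial holder}--\ref{lem:rightlineendpt=beta=1} for the tail, and focus on $\fM_{p,0}^-$. Part~(i) is fine, and for $\fM_{p,\infty}^\pm$ in part~(iii) no refinement is needed, since $\sup_n n^{1/2}N(E,2^{-n})^{1/q}2^{-n/q}\lesssim \sum_{\ell\ge0}(1+\ell)\sup_{n\ge\ell}N(E,2^{-n})^{1/q}2^{-n/q}$.

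The genuine gap is in part~(ii). For $r\in D_n$ you restrict attention to $t\in E$ with $|r-t|\sim 2^{-n}$, asserting this is ``the worst case'' because the kernel is largest there. This is not a valid reduction: the supremum is over all $t\in E$ with $r/2<t<3r/2$, and for $|r-t|\sim 2^{-n+\ell}$ with $\ell>0$ the inner integral samples $g$ on a \emph{different} $s$-interval (near $2^{-n+\ell}$), so kernel size alone cannot determine which $t$ maximizes. The paper handles this by introducing precisely this second scale parameter: for $r\in I_n^\nu\subset D_n$ one decomposes $E$ into $E_{n,\ell}^\nu=\{t\in E:\dist(t,I_n^\nu)\sim 2^{-n+\ell}\}$, and then the $s$-annuli are $s-|r-t|\sim 2^{-n+\ell-m}$. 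Your single parameter $k$ corresponds to the $\ell=0$ slice of this $(\ell,m)$-decomposition. The $\ell$-sum does turn out to be geometric when $p<2$ (with ratio $2^{-(2/p-1)}$), so your intuition that $\ell=0$ dominates is ultimately correct, but it must be proved, and your discretization of \emph{all} of $E$ at scale $2^{-n-k}$ is inconsistent with the earlier restriction to $|r-t|\sim 2^{-n}$.

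This missing $\ell$-scale is also the source of the weight in~(iii): when $p=2$ the factor $2^{-\ell(2/p-1)}$ becomes $1$, the $\ell$-sum no longer converges geometrically, and the case split $m\le\ell$ versus $m>\ell$ produces exactly the $(1+\ell)\sup_{n\ge\ell}$ structure in~\eqref{eq:M0-2-q}. Your proposal to instead group the $D_n$ into blocks $\Omega_\ell=\bigcup_{2^{\ell-1}\le n<2^\ell}D_n$ (as in Proposition~\ref{lem:rightlineendpt=beta=1}) is a different mechanism and it is not clear it reproduces the stated bound for $\fM_{p,0}^-$; in the paper that blocking is used only for $\fM_{p,\infty}^\pm$.
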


\begin{proof}
We use the decomposition ${{\mathfrak{M}}}_p^\pm g \lesssim {{\mathfrak{M}}}_{p,0}^\pm g + {{\mathfrak{M}}}_{p,\infty}^\pm g$, and for the operators ${{\mathfrak{M}}}_{p,\infty}^\pm$ the bounds in (i), (ii) and (iii) follow from Propositions \ref{lem:trivial holder}, \ref{lem:rightlineendpt} and \ref{lem:rightlineendpt=beta=1} respectively.

We shall then focus on ${{\mathfrak{M}}}_{p,0}:={{\mathfrak{M}}}_{p,0}^-$; the corresponding arguments for ${{\mathfrak{M}}}_{p,0}^+$ require only a minor notational modification. We observe that ${{\mathfrak{M}}}_{p,0} g(r)=0$ if $r\in {{\mathbb{R}}}\setminus [\frac 23,4]$ and hence $\| {{\mathfrak{M}}}_{p,0} g \|_{L^q(r \mathrm{d} r)} \sim \| {{\mathfrak{M}}}_{p,0} g \|_{L^q}$.

\medskip

\noindent \emph{Case $p >2$.} For this simpler case, we note the pointwise estimate
\begin{align*}
{{\mathfrak{M}}}_{p,0} g(r) & \lesssim \sum_{m \geq 0} \sup_{\substack{t \in [1,2] \\ r/2 < t < 3r/2}} \int_{|r-t|(1+2^{-m-1})}^{|r-t|(1+2^{-m})} s^{\frac{1}{2}-\frac{1}{p}} (s-|r-t|)^{-1/2} |g(s)| {{\text{\,\rm d}}} s \\
& \lesssim \sum_{m \geq 0} \sup_{\substack{t \in [1,2] \\ r/2 < t < 3r/2}} 2^{m/2} |r-t|^{-1/p} \int_{|r-t|(1+2^{-m-1})}^{|r-t|(1+2^{-m})} |g(s)| {{\text{\,\rm d}}} s.
\end{align*}
By H\"older's inequality, this is further estimated by a constant times
\begin{align*} &\sum_{m \geq 0} \sup_{\substack{t \in [1,2] \\ r/2 < t < 3r/2}} 2^{m/2} |r-t|^{-1/p} (|r-t|2^{-m})^{1/p'} \Big( \int_{0}^{6} |g(s)|^p {{\text{\,\rm d}}} s \Big)^{1/p} \\
& = \sum_{m \geq 0} 2^{-m(\frac{1}{2}-\frac{1}{p})} \sup_{\substack{t \in [1,2] \\ r/2 < t < 3r/2}} |r-t|^{1-\frac{2}{p}} \| g \|_p \lesssim \| g \|_p,
\end{align*}
since the $m$-sum converges for $p>2$.
The $L^q$-bound for ${{\mathfrak{M}}}_{p,0} $ then follows from trivial integration in the $r$-variable. This concludes the proof of (i).

\medskip

\noindent \emph{Case $p \leq 2$.} We treat (ii) and (iii) simultaneously. First, note that we can assume without loss of generaltiy that $|\overline{E}|=0$. Indeed, for $p<2$ the finiteness of the right-hand side of \eqref{eq:M0-p-q}
implies the estimate $\delta N(E,\delta){\lesssim} \delta^{-(1-\frac 2p)q} \to 0$ as $\delta \to 0$. Thus
$|\overline E|=0$. Similarly, the finiteness of the right-hand side of \eqref{eq:M0-2-q} also implies $|\overline E|=0$.

For each $n \geq 0$, let $D_n:=\{ r : 2^{-n} \leq {{\text{\rm dist}}} (r, E) < 2^{-n+1}\}$ and note that $|D_n| {\lesssim} N(E,2^{-n})2^{-n}$. Write $D_n = \bigcup_{\nu \in {{\mathfrak{N}}}_n} I_n^\nu$ where $I_n^\nu$ are disjoint intervals of length $|I_n^\nu|=2^{-n-1}$, with $\#{{\mathfrak{N}}}_n \sim N(E, 2^{-n})$. For each $\nu \in {{\mathfrak{N}}}_n$, and $0\leq \ell \leq n+1$, let $E_{n,\ell}^\nu:=\{t \in E : 2^{-n+\ell-1} \leq {{\text{\rm dist}}}(t, I_n^\nu) < 2^{-n+\ell}\}$. Then we can write
\begin{align*}
&{{\mathfrak{M}}}_{p,0} g(r) \lesssim \sum_{n \geq 0} \sum_{\nu \in {{\mathfrak{N}}}_n} {{\mathbbm 1}}_{I_n^\nu}(r) \sup_{0 \leq \ell \leq n+1} \sup_{t \in E_{n,\ell}^\nu} \int_{|r-t|}^{2|r-t|} \frac{s^{\frac{1}{2}-\frac{1}{p}}}{(s-|r-t|)^{1/2}} |g(s)| {{\text{\,\rm d}}} s \\
& \,\,\lesssim \sum_{\ell \geq 0} \sum_{m \geq 0} 2^{m/2} \sum_{\substack{n \in {\mathbb N}_0: \\ n \geq \ell -1}} \sum_{\nu \in {{\mathfrak{N}}}_n} {{\mathbbm 1}}_{I_n^\nu}(r) 2^{(n-\ell)/p} \sup_{t \in E_{n,\ell}^\nu} \int_{|r-t|(1+2^{-m-1})}^{|r-t|(1+2^{-m})} |g(s)| {{\text{\,\rm d}}} s \\
& \,\,=: \sum_{\ell \geq 0} \sum_{m \geq 0} {{\mathfrak{M}}}^{m,\ell} g(r).
\end{align*}
We now break the analysis depending on whether $m \leq \ell$ or $m > \ell$. If $m \leq \ell$, we have by Hölder's inequality
\begin{align*}
{{\mathfrak{M}}}^{m,\ell} g(r) {{\mathbbm 1}}_{I_n^\nu}(r) \lesssim 2^{m/2} 2^{(n-\ell)/p} \Big( \int_{2^{-n+\ell-1}}^{2^{-n+\ell+2}}|g(s)|^p {{\text{\,\rm d}}} s \Big)^{1/p} 2^{(-n+\ell-m)/p'}.
\end{align*}
Thus, we estimate $\|{{\mathfrak{M}}}^{m,\ell} g \|_{q}$ by a constant times
\begin{align*}
& \lesssim 2^{-\ell(\frac{2}{p}-1)} 2^{m(\frac{1}{p}-\frac{1}{2})} \Big( \sum_{ \substack{n \in {\mathbb N}_0 : \\ n \geq \ell-1}} |D_n| 2^{-n(1-\frac{2}{p})q} \Big( \int_{2^{-n+\ell-1}}^{2^{-n+\ell+2}}|g(s)|^p {{\text{\,\rm d}}} s \Big)^{\frac{q}{p}} \Big)^{\frac{1}{q}} \\
& \lesssim 2^{-\ell(\frac{2}{p}-1)} 2^{m(\frac{1}{p}-\frac{1}{2})} \sup_{n \geq \ell} N(E,2^{-n})^{1/q} 2^{-n(1+\frac{1}{q}-\frac{2}{p})} \| g \|_p
\end{align*}
using $ p \leq q$. Summing, we obtain for $p<2$
\begin{equation}\label{eq:m leq ell p<2}
\sum_{\ell\ge 0} \sum_{m\le \ell} \| {{\mathfrak{M}}}^{m,\ell} g \|_{q}
{\lesssim} (2-p)^{-2} \sup_{n \geq 0} N(E,2^{-n})^{1/q} 2^{-n(1+\frac{1}{q}-\frac{2}{p})} \|g\|_p,
\end{equation}
and, for $p=2$,
\begin{equation}\label{eq:m leq ell p=2}
\sum_{\ell\ge 0} \sum_{m\le \ell} \| {{\mathfrak{M}}}^{m,\ell} g \|_{q}
{\lesssim} \sum_{\ell \geq 0} (1+\ell) \sup_{n \geq \ell} N(E,2^{-n})^{1/q} 2^{-n/q} \|g\|_2.
\end{equation}

We next turn to the terms $m > \ell$ and perform a finer analysis. We further break the set $E_{n,\ell}^\nu$ into smaller intervals of length $2^{-n+\ell-m}$; we call these intervals $Q_{n,\ell,m}^{\nu,\rho}$, index them by $\rho$, and note there are $O(2^m)$ of them. Note that if $m > \ell$, these intervals are smaller than the $I_n^\nu$ intervals where the $r$-variable lives in.
We then have that for $r \in I_n^\nu$,
\begin{align*}
&\sup_{t \in E_{n,\ell}^\nu} \int_{|r-t| (1+2^{-m-1})}^{|r-t|(1+2^{-m})} |g(s)|{{\text{\,\rm d}}} s \lesssim \sup_{\rho} \sup_{t \in Q_{n,\ell,m}^{\nu, \rho}} \int_{|r -t| + 2^{-n+\ell-m-1}}^{|r-t|+2^{-n+\ell-m}} |g(s)| {{\text{\,\rm d}}} s \\
& \lesssim \sum_{\pm} \sup_{\rho } \int_{\pm (r - \tilde{Q}_{n,\ell,m}^{\nu,\rho})} |h(s)| {{\text{\,\rm d}}} s = \sup_\rho |h| \ast {{\mathbbm 1}}_{\tilde{Q}_{n,\ell.m}^{\nu,\rho}} (r) + \sup_\rho |\tilde{h}| \ast {{\mathbbm 1}}_{\tilde{Q}_{n,\ell.m}^{\nu,\rho}} (r)
\end{align*}
where $h(s):= g(s) {{\mathbbm 1}}_{\{2^{-n+\ell-1} \leq s \leq 2^{-n+\ell+2}\}}(s)$, $\tilde{h}(s)=h(-s)$ and $\tilde{Q}_{n,\ell,m}^{\nu,\rho}$ denotes the concentric triple of $Q_{n,\ell,m}^{\nu,\rho}$. Without loss of generality, we can assume the first term dominates, and using this bound in the definition of ${{\mathfrak{M}}}^{m,\ell}$ we obtain, using Young's convolution inequality,
\begin{align*}
& \| {{\mathfrak{M}}}^{m,\ell} g \|_{q} \lesssim 2^{m/2} \Big( \sum_{\substack{n \in {\mathbb N}_0: \\ n \geq \ell-1}} \sum_{\nu} \sum_\rho 2^{(n-\ell)\frac{q}{p}} \int_{I_n^\nu} \big| |h| \ast {{\mathbbm 1}}_{\tilde{Q}_{n,\ell,m}^{\nu,\rho}}(r)\big|^q {{\text{\,\rm d}}} r \Big)^{ \frac 1q} \\
& \lesssim 2^{m/2} \Big( \sum_{\substack{n \in {\mathbb N}_0: \\ n \geq \ell-1}} \sum_{\nu} \sum_\rho 2^{(n-\ell)\frac{q}{p}} |Q_{n,\ell,m}^{\nu,\rho}|^{q(1+\frac{1}{q}-\frac{1}{p})} \Big(\int_{2^{-n+\ell-1}}^{2^{-n+\ell+2}} |g(s)|^p {{\text{\,\rm d}}} s \Big)^{\frac qp} \Big)^{\frac 1q}
\end{align*}
which is
\begin{align*} &\lesssim 2^{-m(\frac{1}{2}+\frac{1}{q} - \frac{1}{p})} \Big( \sum_{\substack{n \in {\mathbb N}_0: \\ n \geq \ell-1}}
\frac{N(E,2^{-n+\ell-m})}{ 2^{(n-\ell)q(1+\frac{1}{q}-\frac{2}{p})} }\Big(\int_{2^{-n+\ell-1}}^{2^{-n+\ell+2}} |g(s)|^p {{\text{\,\rm d}}} s \Big)^{ \frac qp} \Big)^{\frac 1q} \\
& \lesssim 2^{-m(\frac{1}{2}+\frac{1}{q} - \frac{1}{p})} \sup_{n \geq \ell} N(E,2^{-n+\ell-m})^{1/q} 2^{(-n+\ell)(1+\frac{1}{q}-\frac{2}{p})} \| g \|_p
\end{align*}
where in the last step we used that $p \leq q$.
We next note that
\begin{align*}&\sum_{\ell\ge 0}\sum_{m> \ell} 2^{-m(\frac{1}{2}+\frac{1}{q} - \frac{1}{p})} \sup_{n \geq \ell} N(E,2^{-n+\ell-m})^{1/q} 2^{(-n+\ell)(1+\frac{1}{q}-\frac{2}{p})}
\\&=
\sum_{\ell\ge 0}\sum_{m> \ell} 2^{m(\frac 12-\frac 1p)}
\sup_{k\ge 0} N(E, 2^{-(k+m)}) 2^{-(k+m) (1-\frac 2p+\frac 1q) }
\\&{\lesssim} \sum_{m\ge 0} (1+m) 2^{m(\frac 12-\frac 1p)} \sup_{n\ge m} N(E, 2^{-n}) 2^{-n (1-\frac 2p+\frac 1q) }
\end{align*}
and consequently we obtain, for $p< 2$, \[\sum_{\ell\ge 0}\sum_{m> \ell}\|{{\mathfrak{M}}}^{m,\ell} g\|_q{\lesssim} (2-p)^{-2} \sup_{n\ge 0} N(E, 2^{-n}) 2^{-n (1-\frac 2p+\frac 1q) } \|g\|_p\]
and for $p=2$
\[\sum_{\ell\ge 0}\sum_{m> \ell}\|{{\mathfrak{M}}}^{m,\ell} g\|_q{\lesssim} \sum_{m \geq 0} (1+m) \sup_{n\ge m} N(E, 2^{-n}) 2^{-n/q } \|g\|_2.\]
Combining these with \eqref{eq:m leq ell p<2} and \eqref{eq:m leq ell p=2} concludes the proof of parts (ii) and (iii).
\end{proof}

\subsection{Proofs of Theorems \ref{thm:maintwodim} and \ref{thm:2dendpoint}}
We are now in a position to prove the $2$-dimensional results stated in the introduction.
We observe that for $d=2$ the triangle $\Delta_\beta$ is given by
\begin{equation}
\label{eq:Deltabeta set}
\Delta_\beta= \big\{ (\tfrac{1}{p}, \tfrac{1}{q}) \in [0,1]^2: \tfrac 1{2p}\le \tfrac 1q\le \tfrac{1}{p}, \,\,\, \tfrac{1-\beta} q\ge \tfrac 2p-1\big\}.
\end{equation}
We start with Theorem \ref{thm:maintwodim}, which gives a complete answer regarding $\overline{{{\mathcal{T}}}_E^{{\text{\rm rad}}}}$.

\begin{proof}[Proof of Theorem \ref{thm:maintwodim}]

The implication $\subset$ follows from the necessary conditions in Corollary \ref{cor: Delta beta nec} and Lemma \ref{lem: nec hard}.

We turn to the sufficient conditions. By Lemma \ref{lem:2d reduction}, it suffices to consider the operators $R_1^\pm, R_2^\pm$ and ${{\mathfrak{M}}}^\pm_p$. We first assume $\beta=\dim_M E < 1$.
\begin{enumerate}[(a)]
\item By Proposition \ref{prop:R1R2} and the definition of Minkowski dimension, we have that $R_1^\pm$ are $L^p(s \mathrm{d}s) \to L^q(r \mathrm{d}r)$ bounded if $\frac{1-\beta}{q} + \frac{1}{2} -\frac{1}{p}>0$. Note that this only constitutes a constraint for $p < 2$, which is subdominant with respect to the condition in \eqref{eq:Deltabeta set}. Thus, bounds for $R_1^\pm$ hold when $\beta<1$ and $(\frac 1p, \frac 1q)\in \Delta_\beta$.

\item By the definition of $\nu^\#$ in \eqref{eq:nudagger}, we have for any $\nu>\nu^\#(\frac{q}{2}-1)$
\begin{equation}\label{eq:nu no dagger}
\sup_{|J|=2^{-k}} |J|^{1- \frac{q}{2}} N(E \cap J, \delta) \lesssim \delta^{-\nu}
\end{equation}
for all $0 < \delta \leq 2^{-k}$, with implicit constant independent of $k$. Recalling the definition of $\omega_{m}^{p,q}(E,k)$ in \eqref{eq:omega def}, we get from \eqref{eq:nu no dagger}
\begin{equation*}
2^{-m(\frac{1}{2}+ \frac{1}{q} - \frac{1}{p})} \omega_m^{p,q}(E,k) \lesssim 2^{(m+k)(\frac{1}{p}-\frac{1}{2} - \frac{1-\nu}{q})}.
\end{equation*}
Thus, by Proposition \ref{prop:R3-better}, (i), and choosing a suitable $\nu$ we see that $R_2^\pm$ is $L^p(s \mathrm{d}s) \to L^q(r \mathrm{d}r)$ bounded if $\frac{1}{p}-\frac{1}{2} < \frac{1}{q} ( 1-\nu^\#(\frac{q}{2}-1))$ and $2 \leq q\leq 2p$. For $q <2$, we directly use the definition of Minkowski dimension in $\omega_{m}^{p,q}(E,k)$ and obtain that, for every $\varepsilon>0$,
\begin{align*}
& 2^{-m(\frac{1}{2} + \frac{1}{q}-\frac{1}{p})} \omega_m^{p,q}(E,k)\\
& \qquad \qquad \lesssim_\varepsilon 2^{-m(\frac{1}{p}- \frac{1}{2} - \frac{\varepsilon}{2})} 2^{-k(\frac{1}{p} + \frac{1}{q} - 1 - \frac{\varepsilon}{q})} 2^{-(m+k)(\frac{1-\beta}{q} - \frac{2}{p} +1)}.
\end{align*}
This is further bounded by $2^{-m(\frac{1}{p}- \frac{1}{2} - \frac{\varepsilon}{2})} 2^{-k(\frac{1}{p} + \frac{1}{q} - 1 - \frac{\varepsilon}{q})}$ if $(\frac{1}{p}, \frac{1}{q}) \in \Delta_\beta$. Furthermore, these exponents are negative if $p \leq q < 2$, provided $\varepsilon>0$ is chosen sufficiently small. Thus, by Proposition \ref{prop:R3-better}, (ii) gives that $R_2^\pm$ is $L^p(s \mathrm{d}s) \to L^q(r \mathrm{d}r)$ bounded if $1 \leq p \leq q < 2$ and $(\frac{1}{p}, \frac{1}{q}) \in \Delta_\beta$ for $\beta <1$. Consequently, we have shown the inclusion $\supset$ for $R_2^\pm$ if $\beta <1$.

\item By Proposition \ref{prop:Mp-d=2} and the definition of Minkowski dimension, the operators ${{\mathfrak{M}}}^\pm_p$ are $L^p \to L^q(r \mathrm{d}r)$ bounded for $2 \leq p \leq q$. For $1 \leq p < 2$, we have boundedness if $\frac{1-\beta}{q} + 1 - \frac{2}{p} > 0$. Thus, in view of \eqref{eq:Deltabeta set}, we have the
${{\mathfrak{M}}}_p^\pm$ bounds for $(\frac{1}{p}, \frac{1}{q}) \in \mathrm{int}(\Delta_\beta)$.

\end{enumerate}

For the case $\beta=1$, we use $N(E,\delta) \lesssim \delta^{-1}$ for any $0 < \delta < 1$. If $(\frac{1}{p}, \frac{1}{q}) \in \mathrm{int} (\Delta_1)$ we have $p>2$. The desired bounds for $R_1^\pm$ and ${{\mathfrak{M}}}_p^\pm$ follow immediately from Propositions \ref{prop:R1R2} and \ref{prop:Mp-d=2}, (i). For the operator $R_2^\pm$, the argument in (b) above yields that it is $L^p(s \mathrm{d}s) \to L^q(r \mathrm{d}r)$ bounded if $2 < p \leq q < 2p$ and $\frac{1}{p}-\frac{1}{2} < \frac{1}{q}(1-\nu^\#(\frac{q}{2}-1))$. This completes the proof.
\end{proof}

We conclude by giving the proof of Theorem \ref{thm:2dendpoint}, which addresses endpoint situations in 2 dimensions.

\begin{proof}[Proof of Theorem \ref{thm:2dendpoint}]
Recall from Corollary \ref{cor: Delta beta nec} that ${{\mathcal{T}}}_E^{{\text{\rm rad}}} \subset \Delta_\beta$. We first note the following:
\begin{enumerate}[(a)]
\item If $\beta <1$, we have shown in the proof of Theorem \ref{thm:maintwodim}, (a), that $R_1^\pm :L^p(s\mathrm{d}s) \to L^q(r\mathrm{d}r)$ for $(\frac{1}{p},\frac{1}{q}) \in \Delta_\beta$.
\item By the definition of $\omega_{m}^{p,q}(E,k)$ in \eqref{eq:omega def}, we get for all $\varepsilon>0$
\begin{equation*}
\omega_{m}^{p,2p}(E,k) \lesssim_\varepsilon 2^{m(\gamma+\varepsilon)/2p}.
\end{equation*}
Using this in Proposition \ref{prop:R3-better}, (i), yields that $R_2^\pm$ is $L^{p}(s \mathrm{d}s) \to L^{2p}(r \mathrm{d}r)$ bounded if $p > 1+\gamma$.
\item If $1 \leq p \leq q < \infty$ and $\frac{1-\beta}{q}+1-\frac{2}{p}>0$, we have by Proposition \ref{prop:Mp-d=2} that ${{\mathfrak{M}}}_p^\pm$ is $L^p \to L^q(r \mathrm{d}r)$ bounded. If
$\sup_{\delta<1} \delta^\beta N(E,\delta) < \infty$ and $\beta <1$, Proposition \ref{prop:Mp-d=2} implies that ${{\mathfrak{M}}}_p^\pm$ is $L^p \to L^q(r \mathrm{d}r)$ bounded for all $(\frac{1}{p}, \frac{1}{q}) \in \Delta_\beta$.
\end{enumerate}
In view of (a), the endpoint bounds in parts (i)-(iv) of Theorem \ref{thm:2dendpoint} are only determined by the operators $R_2^\pm$ and ${{\mathfrak{M}}}_p^\pm$. We now discuss item by item.
\begin{enumerate}[(i)]
\item Assume $2 \gamma - \beta < 1$; this implies $\beta < 1$.
For the left to right implication, note that if ${{\mathcal{T}}}_E^{{{\text{\rm rad}}}}=\Delta_\beta$, we have that $L^p_{\mathrm{rad}} \to L^q$ bounds for $M_E$ hold on the line $\frac{1-\beta}{q}=\frac{2}{p}-1$: see \eqref{eq:Deltabeta set}. But this implies, by Lemma \ref{lem: nec easy}, (ii), the claimed necessary condition $\sup_{\delta<1} \delta N(E,\delta)< \infty$.

For the reverse implication, we have by Lemma \ref{lem:R2-p=q} that $R_2^\pm$ is bounded on $L^{1+\beta}(r \mathrm{d}r)$, which corresponds to the point $P_{2,\beta}$. By interpolation, it suffices to show $L^{p}(s \mathrm{d}s) \to L^{2p}(r \mathrm{d}r)$ boundedness for $(\frac{1}{p}, \frac{1}{q})\in [P_1, P_{3,\beta}^{\mathrm{rad}}]$, that is, for $p \geq \frac{3+\beta}{2}$. But since $2 \gamma - \beta < 1$, this follows by (b) above. This implies $\Delta_\beta \subset {{\mathcal{T}}}_E^{{\text{\rm rad}}}$.

\item\label{item2} Assume $2 \gamma - \beta = 1$, $\beta <1$ and $\sup_\delta \delta^\beta N(E,\delta) < \infty$.
If $\beta=0$, we have by assumption that $N(E,\delta)=O(1)$ uniformly in $\delta$. The bounds for $M_E$ are then an immediate consequence of the classical result of \cite{Littman} on a single spherical average, which also includes the point $P_3=(\frac{2}{3}, \frac{1}{3})$.

We can therefore assume, in what follows, $0 < \beta <1$. By (a) and (c), it suffices to consider $R_2^\pm$.
Since $1+\gamma=\frac{3+\beta}{2}$, the argument in (b) only gives boundedness for $(\frac{1}{p},\frac{1}{q}) \in [P_1, P_{3,\beta}^{\mathrm{rad}})$. However, we have by Lemma \ref{lem:R2-p=q} that $R_2^\pm$ is $L^p(r\mathrm{d}r)$-bounded if either $\beta<1/2$ and $p\ge 1$, or $\beta\ge 1/2$ and $p>2\beta$. Thus, if $(\frac 1p,\frac 1q)\in (P_{2,\beta}, P_{3,\beta}^{{\text{\rm rad}}}$) (hence $\frac{1-\beta}q=\frac 2p-1$), we can find $p_0$ with $\max\{1,2\beta\}<p_0<1+\beta$ and $p_1>1+{\gamma}$ with $(\frac{1}{p_1},\frac{1}{2p_1})\in (P_1, P_{3,\beta}^{{\text{\rm rad}}})$ such that \[ (\tfrac 1p,\tfrac 1q)= (1-{\vartheta}) (\tfrac{1}{p_0}, \tfrac{1}{p_0} )+{\vartheta} (\tfrac{1}{p_1},\tfrac{1}{2p_1})\] for some $0<{\vartheta}<1$, and with $R_2^\pm$ bounded on $L^{p_0}(r\mathrm{d}r)$ and from $L^{p_1}(s\mathrm{d}s) \to L^{2p_1}(r\mathrm{d}r)$ bounded.
Hence, $R_2^\pm$ maps $L^p(s\mathrm{d} s)$ to $L^q(r\mathrm{d} r)$ by interpolation. This yields the desired inclusion $\Delta_\beta \backslash \{P_{3,\beta}^{{\text{\rm rad}}}\} \subset {{\mathcal{T}}}_E^{{\text{\rm rad}}}$.

\item Assume $2 \gamma - \beta =1$ and $\sup_\delta \delta^\beta N(E,\delta) = \infty$ (note this means $\beta <1$). Since the line joining $P_{2,\beta}$ and $P_{3,\beta}^{\mathrm{rad}}$ is given by $\frac{1-\beta}{q}=\frac{2}{p}-1$, we have by Lemma \ref{lem: nec easy}, (ii), that $M_E$ is not $L^p_{\mathrm{rad}} \to L^q$ bounded for $(\frac{1}{p}, \frac{1}{q}) \in [P_{2,\beta}, P_{3,\beta}^{\mathrm{rad}}]$.
The bounds on $R_2^\pm$ are as in \eqref{item2}. Furthermore,
if $\frac{1-\beta}{q}>\frac{2}{p}-1$, the item (c) above guarantees $L^p \to L^q(r \mathrm{d}r)$ bounds for ${{\mathfrak{M}}}_p^\pm$. Thus, ${{\mathcal{T}}}_E^{{\text{\rm rad}}}=\Delta_\beta \backslash [P_{2,\beta}, P_{3,\beta}^{{{\text{\rm rad}}}}]$.

\item Assume $2\gamma-\beta > 1$; this implies $\beta <1$. Since $P_{4,\gamma}^{{{\text{\rm rad}}}}=(\frac{1}{1+\gamma}, \frac{1}{2(1+\gamma)})$, the claimed bounds for $R_2^\pm$ on $[P_1,P_{4,\gamma}^{{\text{\rm rad}}})$ follow from (b) above. The boundedness of ${{\mathfrak{M}}}_p^\pm$ on this line segment follows from (c) above, since $P_{4,\gamma}^{{\text{\rm rad}}}$ lies in the line segment $(P_1, P_{3,\beta}^{{\text{\rm rad}}})$.

\item Assume $\beta=1$. Then the line segment $[P_{2}, P_3^{{\text{\rm rad}}}]$ is vertical with $p=2$, and the condition $\sup_{\delta<1} \delta \log(\frac 1\delta)N(E,\delta)=\infty$ and Lemma \ref{lem:lowerbd-pd} exclude $L^2_{{\text{\rm rad}}} \to L^q$ boundedness for any $q\ge 2$. On the other hand, if $p>2$ we have by Proposition \ref{prop:R1R2} that $R_1^\pm$ is $L^p(s \mathrm{d}s) \to L^q(r \mathrm{d}r)$ for all $2 < p \leq q \leq \infty$, with analogous bounds for ${{\mathfrak{M}}}_p^\pm$ by Proposition \ref{prop:Mp-d=2}, (i). For $R_2^\pm$, the bounds for $2 < p \leq q \leq 2p$ follow from (b) above. Consequently, $M_E$ is $L^p_{{\text{\rm rad}}} \to L^q$ bounded if and only if $2 < p \leq q \leq 2p$.
\end{enumerate}
This concludes the proof.
\end{proof}

\end{document}